\def\rset{\mathbb{r}}
\newcounter{hypA}
\newenvironment{hypA}{\refstepcounter{hypA}\begin{itemize}
  \item[({\bf A\arabic{hypA}})]}{\end{itemize}}
\def\rmd{\mathrm{d}}
\def\PE{\mathbb{E}}
\def\PP{\mathbb{P}}
\def\Cov{\mathrm{Cov}}
\def\Var{\mathop{\rm Var}\nolimits}
\def\rmd{\mathrm{d}}
\def\trace{\textrm{Tr}}
\newcommandtwoopt{\QRC}[4][][]
{\ifthenelse{\equal{#1}{}}
{\ifthenelse{\equal{#2}{}}{\mathrm{Q}_{#3}\left( #4\right)}{\mathrm{Q}_{#3}\left(#4,#2\right)}}
{\ifthenelse{\equal{#2}{}}{\mathrm{Q}^{#1}_{#3}\left(#4\right)}{\mathrm{Q}^{#1}_{#3}\left(#4,#2\right)}
}
}
\newtheorem{theo}{Theorem}
\newtheorem{prop}[theo]{Proposition}
\newtheorem{lemma}[theo]{Lemma}
\newtheorem{corollary}[theo]{Corollary}
\theoremstyle{remark}
\newtheorem{remark}{Remark}
\def\rset{\mathbb{R}}
\def \1{\mathbbm{1}}
\def\B{\textrm{B}}
\begin{document}
\begin{frontmatter}
\title{Asymptotic properties of U-processes under long-range dependence}
\runtitle{U-processes under long-range dependence}
\begin{aug}
\author{\fnms{C.} \snm{L\'evy-Leduc}\thanksref{t1,m1}\ead[label=e1]{celine.levy-leduc@telecom-paristech.fr}},
\author{\fnms{H.} \snm{Boistard}\thanksref{m2}\ead[label=e2]{helene@boistard.fr}},
\author{\fnms{E.} \snm{Moulines}\thanksref{m1}\ead[label=e3]{moulines@telecom-paristech.fr}},
\author{\fnms{M. S.} \snm{Taqqu}\thanksref{m4,t2}\ead[label=e4]{murad@math.bu.edu}}
\and
\author{\fnms{V. A.} \snm{Reisen}\thanksref{m5}\ead[label=e5]{valderio@cce.ufes.br}}
\thankstext{t1}{Corresponding author}
 \thankstext{t2}{Murad S. Taqqu would like to thank Telecom ParisTech for their
  hospitality. This research was partially supported by the NSF
  Grants DMS-0706786 and DMS-1007616 at Boston University.}
\runauthor{C. L\'evy-Leduc, H. Boistard, E. Moulines, M. Taqqu and
  V. Reisen}
\affiliation{CNRS/LTCI/TelecomParisTech \thanksmark{m1}, Toulouse School of Economics \thanksmark{m2},}
\affiliation{Boston University \thanksmark{m4} and Universidade Federal do  Esp\'irito Santo \thanksmark{m5}}
\address{C\'eline L\'evy-Leduc (corresponding author)\\
CNRS/LTCI/TelecomParisTech \\
46, rue Barrault \\
75634 Paris C\'edex 13, France.\\
\printead{e1}}
\address{H\'el\`ene Boistard\\
Toulouse School of Economics \\
GREMAQ, Universit\'e Toulouse 1\\Manufacture des Tabacs\\
  b\^at. F, aile J.J. Laffont \\ 21 all\'ee de Brienne \\ 31000
  Toulouse\\\printead{e2}}
\address{Eric Moulines\\
CNRS/LTCI/TelecomParisTech \\
46, rue Barrault \\
75634 Paris C\'edex 13, France.\\
\printead{e3}}
\address{Murad S. Taqqu\\
Department of Mathematics\\ Boston University\\ 111 Cumington
  Street\\ Boston, MA 02215, USA\\\printead{e4}}
\address{Valderio Anselmo Reisen\\
Departamento de Estat\'istica\\ Universidade Federal \\do
  Esp\'irito Santo \\ Vit\'oria/ES, Brazil.\\\printead{e5}}
\end{aug}
\begin{abstract}
Let $(X_i)_{i\geq 1}$ be a stationary
mean-zero  Gaussian process with covariances $\rho(k)=\PE(X_{1}X_{k+1})$ satisfying:
$\rho(0)=1$ and $\rho(k)=k^{-D} L(k)$ where $D$ is in $(0,1)$
and $L$ is slowly varying at infinity. Consider the $U$-process
$\{U_n(r),\; r\in I\}$ defined as
$$
U_n(r)=\frac{1}{n(n-1)}\sum_{1\leq i\neq j\leq n}\1_{\{G(X_i,X_j)\leq r\}}\; ,
$$
where $I$ is an interval included in $\rset$ and $G$ is a
symmetric function.
In this paper, we provide central and non-central limit theorems for
$U_n$. 
They are used to
derive, in the long-range dependence setting, new 
properties
of many well-known estimators such as the Hodges-Lehmann estimator, which is a well-known
robust location estimator, the Wilcoxon-signed rank statistic,
the sample correlation integral and an associated robust scale
estimator. These robust estimators are shown to
have the same asymptotic distribution as the classical location and
scale estimators.
The limiting distributions are expressed through multiple Wiener-It\^o
integrals.

\end{abstract}
\begin{keyword}[class=AMS]
\kwd[Primary ]{60F17, 62M10, 62G30, 62G20.}
\end{keyword}
\begin{keyword}
\kwd{Long-range dependence}
\kwd{$U$-process}
\kwd{Hodges-Lehmann estimator}
\kwd{Wilcoxon-signed rank test}
\kwd{sample correlation integral.}
\end{keyword}
\end{frontmatter}
\section{Introduction}
Since the seminal work by \cite{hoeffding:1948}, $U$-statistics
have been widely studied to investigate the asymptotic properties
of many statistics such as the sample variance, the Gini's mean
difference and the Wilcoxon one-sample statistic, see
\cite{serfling:1980} for other examples.
One of the most powerful tools used to derive the asymptotic
behavior of $U$-statistics is the Hoeffding's decomposition [\cite{hoeffding:1948}].
In the i.i.d and weak dependent
frameworks, it provides a decomposition of a $U$-statistic into
several terms having different orders of magnitudes, and in general
the one with the leading order determines
the asymptotic behavior of the $U$-statistic, see \cite{serfling:1980},
\cite{borovkova:burton:dehling:2001} and the references therein
for further details. A recent review of the properties of
$U$-statistics in various frameworks
is presented in \cite{hsing:wu:2004}.
In the case of processes having a long-range dependent structure,
decomposition ideas are also crucial. 
However, in the case of Gaussian long-memory processes, the classical
Hoeffding's decomposition may not provide the complete
asymptotic behavior of
$U$-statistics because all terms of
this decomposition may contribute
to the limit, see for example
\cite{dehling:taqqu:1991}. In this case, the asymptotic study
of $U$-statistics can be achieved
by using an expansion in Hermite polynomials, see
\cite{Dehling:Taqqu:1989,dehling:taqqu:1991}.
For a large class of processes including linear and nonlinear processes,
a new decomposition is discussed in \cite{hsing:wu:2004}.
These authors use martingale-based
techniques to establish the asymptotic
properties of $U$-statistics.

A very natural extension of $U$-statistics (which are random
variables) is the notion of $U$-processes
which encompasses a wide class of estimators. For example,
\cite{borovkova:burton:dehling:2001} study the
Grassberger-Proccacia estimator which can be used to estimate the
correlation dimension. In Section 5 of their work, the authors
investigate the asymptotic properties of $U$-processes when the underlying
observations are functionals of an absolutely regular process,
that is, short-memory processes.
As far as we know, the asymptotic properties of $U$-processes
in the case of long-range dependence setting have not been established yet,
and this is the heart of the research discussed in this paper.
More precisely, our contribution consists first in
extending the results of \cite{borovkova:burton:dehling:2001} in order to
address the long-range dependence case, second in extending the
results obtained in \cite{Dehling:Taqqu:1989} to functions of two variables
and third in extending the results of \cite{hsing:wu:2004}
to $U$-processes. The authors of the latter paper establish the asymptotic 
properties of $U$-statistics involving causal but non necessarily Gaussian long-range dependent processes 
whereas, in our paper, we establish the asymptotic properties of $U$-processes
involving Gaussian long-range dependent processes. The authors in \cite{hsing:wu:2004} use a martingale decomposition
and we use a Hoeffding decomposition or a decomposition in Hermite polynomials.
In the proof section, we also present an extension of some
results of \cite{soulier:2001}.

Consider the $U$-process defined by
\begin{equation}\label{def:U_n}
U_n(r)=\frac{1}{n(n-1)}\sum_{1\leq i\neq j\leq n}\1_{\{G(X_i,X_j)\leq
  r\}}\; ,\quad r\in I
\end{equation}
where $I$ is an interval included in $\rset$, $G$ is a symmetric function \textit{i.e.}
$G(x,y)=G(y,x)$ for all $x,y$ in $\rset$, and the process $(X_i)_{i\geq 1}$
satisfies the following assumption:
\begin{hypA}
\label{assum:long-range} $(X_i)_{i\geq 1}$ is a stationary
mean-zero  Gaussian process with covariances $\rho(k)=\PE(X_{1}X_{k+1})$ satisfying:
$$
\rho(0)=1 \textrm{ and } \rho(k)=k^{-D} L(k),\ 0<D<1\; ,
$$
where $L$ is slowly varying at infinity and is positive for large $k$.
\end{hypA}
Note that, for a fixed $r$, $U_n(r)$ is a $U$-statistic based on the kernel $h(\cdot,\cdot, r)$ where
\begin{eqnarray}\label{def:h}
h(x,y,r)=\1_{\{G(x,y)\leq r\}}\; , \forall x,y\in\rset \textrm{
    and } r\in I\; .
\end{eqnarray}
We show in this paper that the asymptotic properties of the
$U$-process $U_n(\cdot)$ depends on the
value of $D$ and on the Hermite rank $m$ of the class of functions
$\{h(\cdot,\cdot,r)-U(r), r\in I\}$, defined in Section
\ref{sec:main}. We obtain the rate of convergence of
$U_n(\cdot)$ and also provide the
limiting process when $D>1/2$, $m=2$  and when
$D<1/m$, $m=1,2$. The convergence
rate in the former case is of order $\sqrt{n}$ whereas it is
of order $n^{mD/2}/L(n)^{m/2}$ in the latter.
These results are stated in Theorems \ref{theo:U_n_D>1/2} and
\ref{theo:D<1/2}, respectively.
They are applied to derive the asymptotic properties
of well-known robust location and scale estimators such as the
Hodges-Lehmann estimator [\cite{hodges:lehmann:1963}]
and the Shamos scale estimator proposed by \cite{shamos:1976}
and analyzed by \cite{bickel:lehmann:1979}. These properties are illustrated in
\cite{levy;boistard:moulines:taqqu:reisen:c} using numerical experiments.
Theorems \ref{theo:U_n_D>1/2} and
\ref{theo:D<1/2} allow us to establish novel asymptotic properties on these estimators
in the long-range dependence context. The most striking result is that these robust estimators
have the same asymptotic distribution as the classical estimators, see
Propositions \ref{prop:hodges-lehmann} and \ref{prop:shamos_scale}
in Section \ref{sec:appli}.

Theorems \ref{theo:U_n_D>1/2} and
\ref{theo:D<1/2} have also been used to derive the asymptotic
distribution of a robust scale estimator proposed by \cite{Rousseeuw:Croux:1993}  and a robust autocovariance
estimator introduced in \cite{Genton:Ma:2000}. The robustness and efficiency properties of
these estimators have also been investigated through numerical experiments
and real data analysis. For further details on these theoretical and
numerical studies, we refer the reader to \cite{levy;boistard:moulines:taqqu:reisen:b}.

The paper is organized as follows.  In Section \ref{sec:main},
the main theorems \ref{theo:U_n_D>1/2} and \ref{theo:D<1/2} are stated.
In Section \ref{s:U}, we derive the asymptotic properties of
some quantile estimators.
Section \ref{sec:appli} presents new asymptotic results
in the context of long-range dependence.
In this section, central and non-central limit theorems
are provided for several statistics as an illustration of
the theory presented in Sections \ref{sec:main} and \ref{s:U}.
These statistics are the Hodges-Lehmann
estimator [\cite{hodges:lehmann:1963}],
the Wilcoxon-signed rank statistic [\cite{wilcoxon:1945}],
the sample correlation integral [\cite{grassberger:procaccia:1983}]
and an associated scale estimator proposed by \cite{shamos:1976}
and\\ \cite{bickel:lehmann:1979}.
Section \ref{sec:proofs} develops the proofs of the
results stated in Section \ref{sec:main}. A supplemental article 
\cite{supplement} contains the proofs of some of the lemmas.
It contains also Section \ref{sec:exp} which concerns numerical
experiments.

\section{Main results}\label{sec:main}
We start by introducing the terms involved in the
Hoeffding's decomposition [\cite{hoeffding:1948}].
Recall the definition of $U_n(\cdot)$ in (\ref{def:U_n}) and let
$U(\cdot)$ be defined as
\begin{eqnarray}\label{def:U_hoeff}
U(r)=\int_{\rset^2} h(x,y,r)\varphi(x)\varphi(y)
\rmd x \rmd y\; , \textrm{ for all } r \textrm{ in } I\; ,
\end{eqnarray}
where $\varphi$ denotes the p.d.f of a standard Gaussian random
variable and $h$ is given by (\ref{def:h}). For all $x$ in $\rset$,
and $r$ in $I$, let us define
\begin{eqnarray}\label{def:h_1}
  h_1(x,r)=\int_{\mathbb{R}} h(x,y,r) \varphi(y)\rmd y\; .
\end{eqnarray}
The Hoeffding decomposition amounts to expressing, for all $r$ in $I$,
the difference
\begin{equation}\label{e:Uh}
U_n(r)-U(r)=\frac{1}{n(n-1)}\sum_{1\leq i\neq j\leq n}
[h(X_i,X_j,r)-U(r)]\; ,
\end{equation}
as
\begin{equation}\label{eq:hoeff_decomp}
U_n(r)-U(r)=W_n(r)+R_n(r)\; ,
\end{equation}
where
\begin{equation}\label{eq:W_n_hoeff}
W_n(r)=\frac{2}{n}\sum_{i=1}^n\left\{h_1(X_i,r)-U(r)\right\}\; ,
\end{equation}
and
\begin{equation}\label{eq:R_n_hoeff}
R_n(r)=\frac{1}{n(n-1)}\sum_{1\leq i\neq j\leq
  n}\left\{h(X_i,X_j,r)-h_1(X_i,r)-h_1(X_j,r)+U(r)\right\}\; .
\end{equation}
We now define  the
Hermite rank of the class of functions $\{h(\cdot,\cdot,r)-U(r), r\in I\}$
which plays a crucial role in understanding the asymptotic behavior of the
$U$-process $U_n(\cdot)$. We shall expand the function $(x,y)\mapsto h(x,y,r)
$ in a Hermite polynomials basis of $L^2_{\varphi}(\rset^2)$, that is, the $L^2$
space on $\rset^2$ equipped with product standard Gaussian measures. We use Hermite polynomials
with leading coefficients equal to one which are:
$H_0(x)=1$, $H_1(x)=x$, $H_2(x)=x^2-1$, $H_3(x)=x^3-3x,\dots$ . We get
\begin{equation}\label{eq:hermite_decomp}
h(x,y,r)=\sum_{p,q\geq 0}\frac{\alpha_{p,q}(r)}{p!q!} H_p(x) H_q(y)\;
, \textrm{ in } L^2_{\varphi}(\rset^2)\; ,
\end{equation}
where
\begin{equation}\label{e:alpha}
\alpha_{p,q}(r)=\PE[h(X,Y,r)H_p(X) H_q(Y)]\; ,
\end{equation}
and where $(X,Y)$ is a \textit{standard Gaussian vector} that is $X$
and $Y$ are independent standard Gaussian random variables.
Thus,
\begin{equation}\label{e:hsquare}
\PE [h^2(X,Y,r)]=\sum_{p,q\geq 0}\frac{\alpha^2_{p,q}(r)}{p!q!}\; .
\end{equation}
Note that $\alpha_{0,0}(r)$ is equal to $U(r)$ for all $r$, where $U(r)$ is defined in
(\ref{def:U_hoeff}). The Hermite rank of $h(\cdot,\cdot , r)$ is
the smallest positive integer $m(r)$ such that there exist $p$ and $q$
satisfying $p+q=m(r)$ and $\alpha_{p,q}(r)\neq 0$.
Thus, \eqref{eq:hermite_decomp} can be rewritten
as
\begin{equation}\label{eq:hermite_rank}
h(x,y,r)-U(r)=\sum_{\stackrel{p,q\geq 0}{p+q\geq m(r)}}\frac{\alpha_{p,q}(r)}{p!q!}
H_p(x) H_q(y)\; , \textrm{ in } L^2_{\varphi}(\rset^2)\; .
\end{equation}
%
%
The Hermite rank $m$ of the class of functions $\{h(\cdot,\cdot,r)-U(r)\; , r\in I\}$ is the
smallest index $m=p+q\geq 1$ such that $\alpha_{p,q}(r)\neq 0$ for at least one $r$
in $I$, that is, $m=\inf_{r\in I} m(r)$.
By integrating with respect to $y$ in
(\ref{eq:hermite_decomp}), we obtain the expansion in Hermite polynomials of $h_1$
as a function of $x$:
\begin{equation}\label{eq:hermite_decomp_h_1}
h_1(x,r)-U(r)=\sum_{p\geq 1}\frac{\alpha_{p,0}(r)}{p!} H_p(x)\;
, \textrm{ in } L^2_{\varphi}(\rset)\;,
\end{equation}
where $L^2_{\varphi}(\rset)$ denotes the $L^2$ space
  on $\rset$ equipped with the standard Gaussian measure.
Let $\tau(r)$ be the smallest integer greater than or equal to
$1$ such that $\alpha_{\tau,0}(r)\neq 0$, that is, the Hermite
rank of the function $h_1(\cdot,r)-U(r)$.
The Hermite rank of the class of functions $\{h_1(\cdot,r)-U(r)\; , r\in
I\}$ is the smallest index $\tau\geq 1$ such that
$\alpha_{\tau,0}(r)\neq 0$
for at least one $r$. Since $\tau(r)\geq m(r)$, for all $r$ in $I$,
one has
\begin{equation}\label{eq:tau}
\tau\geq m\; .
\end{equation}
In the sequel, we shall assume that $m$ is equal to 1 or 2. As shown
in Section \ref{sec:appli}, this covers most of the situations
of practical interest.
Theorem \ref{theo:U_n_D>1/2}, given below, establishes the central-limit theorem for the $U$-process
$\{\sqrt{n}(U_n(r)-U(r)), r\in I\}$ when
$$
D>1/m \textrm{ and } m=2\; .
$$
\begin{theo}\label{theo:U_n_D>1/2}
Let $I$ be a compact interval of $\rset$.
Suppose that the Hermite rank of the class of functions $\{h(\cdot,\cdot,r)-U(r)\; ,
r\in I\}$ as defined in (\ref{eq:hermite_rank}) is $m=2$ and
that Assumption (A\ref{assum:long-range}) is satisfied
with $D>1/2$. Assume that $h$ and $h_1$, defined in (\ref{def:h}) and (\ref{def:h_1}),
satisfy the three following conditions:
\begin{enumerate}[(i)]
\item There exists a positive constant $C$
such that for all  $s$, $t$ in $I$, $u$, $v$ in $\rset$,
\begin{equation}\label{cond:h_suppl}
\PE[|h(X+u,Y+v,s)-h(X+u,Y+v,t)|]\leq\; C|t-s|\; ,
\end{equation}
where $(X,Y)$ is a standard Gaussian vector.
\item There exists a positive constant $C$
such that for all $k\geq 1$,
\begin{equation}\label{cond:contr_G}
\PE[|h(X_1+u,X_{1+k}+v,t)-h(X_1,X_{1+k},t)|]\leq\; C(|u|+|v|)\; ,
\end{equation}
\begin{equation}\label{eq:G_lip}
\PE[|h(X_1,X_{1+k},s)-h(X_1,X_{1+k},t)|]\leq C |t-s|\; .
\end{equation}
\item There exists a positive constant $C$
such that
for all  $t$, $s$ in $I$, and $x$, $u$, $v$ in $\rset$,
\begin{equation}\label{cond:h_1}
|h_1(x+u,t)-h_1(x+v,t)|\leq C(|u|+|v|)\; ,
\end{equation}
and
\begin{equation}\label{assum:h_1_Lipschitz}
|h_1(x,s)-h_1(x,t)|\leq C|t-s|\; .
\end{equation}
\end{enumerate}
Then the $U$-process
$$
\{\sqrt{n}(U_n(r)-U(r)), r\in I\}
$$
defined in
(\ref{def:U_n}) and (\ref{def:U_hoeff})
 converges weakly in the space of cadlag functions  $\mathcal{D}(I)$ equipped with the
topology of uniform convergence to the zero mean Gaussian process $\{W(r),r\in I\}$ with
covariance structure given by
\begin{multline}\label{eq:cov_struct}
\PE[W(s)W(t)]=4\;\Cov(h_1(X_1,s),h_1(X_1,t))\\
+4\sum_{\ell\geq1}\{\Cov(h_1(X_1,s),h_1(X_{\ell+1},t))+\Cov(h_1(X_1,t),h_1(X_{\ell+1},s))\}\; .
\end{multline}
\end{theo}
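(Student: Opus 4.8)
The plan is to start from the Hoeffding decomposition \eqref{eq:hoeff_decomp}, $\sqrt{n}(U_n(r)-U(r))=\sqrt{n}W_n(r)+\sqrt{n}R_n(r)$, and to show that the linear part $\sqrt{n}W_n$ produces the whole limit while the degenerate remainder $\sqrt{n}R_n$ vanishes uniformly in $r$. The structural fact that drives everything is that $m=2$ forces $\alpha_{1,0}(r)=\alpha_{0,1}(r)=0$ for every $r\in I$; hence, by \eqref{eq:hermite_decomp_h_1}, each centred projection $h_1(\cdot,r)-U(r)$ has Hermite rank $\tau(r)\geq 2$. Since $D>1/2$ gives $\sum_{k}|\rho(k)|^2<\infty$, the linear part sits in the Breuer--Major (central-limit) regime, and the limit will be continuous because the covariance \eqref{eq:cov_struct} is, by the Lipschitz assumptions, continuous in $(s,t)$.

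For $\{\sqrt{n}W_n(r)\}$ I would establish weak convergence by finite-dimensional convergence plus tightness. For the finite-dimensional distributions, the Cram\'er--Wold device reduces matters to a single linear combination $\tfrac{2}{\sqrt{n}}\sum_{i=1}^{n}g(X_i)$ with $g=\sum_{j}\lambda_j(h_1(\cdot,r_j)-U(r_j))$; since every summand has vanishing rank-one coefficient, $g$ has Hermite rank at least $2$, and the Breuer--Major theorem gives asymptotic normality. The covariance follows from $\Cov(\sqrt{n}W_n(s),\sqrt{n}W_n(t))=\tfrac{4}{n}\sum_{i,j}\Cov(h_1(X_i,s),h_1(X_j,t))$, writing each cross-covariance as $\sum_{p\geq 2}\frac{\alpha_{p,0}(s)\alpha_{p,0}(t)}{p!}\rho(\ell)^p$ and passing to the limit by dominated convergence using $\sum_{\ell}|\rho(\ell)|^2<\infty$, which yields exactly \eqref{eq:cov_struct}. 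For tightness I would use \eqref{assum:h_1_Lipschitz}: the increment $h_1(\cdot,t)-h_1(\cdot,s)$ has Hermite rank $\geq 2$ and $L^2_\varphi$-norm at most $C|t-s|$, so the same Hermite/summability estimate gives $\PE[(\sqrt{n}(W_n(t)-W_n(s)))^2]\leq C|t-s|^2$ uniformly in $n$, which is amply sufficient for tightness in $\mathcal{D}(I)$ under the uniform topology.

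For the remainder, the first step is pointwise negligibility. The Hermite expansion of the degenerate kernel $h(x,y,r)-h_1(x,r)-h_1(y,r)+U(r)$ retains only products $H_p(x)H_q(y)$ with $p,q\geq 1$, so its minimal rank is reached at $(p,q)=(1,1)$; a diagram-formula (Wick) computation then gives $\Var(R_n(r))=O(n^{-2D}L(n)^2)$, hence $\Var(\sqrt{n}R_n(r))=O(n^{1-2D}L(n)^2)\to 0$ exactly because $D>1/2$. This is where the hypothesis $D>1/2$ is used. The second, harder step is to upgrade this to $\sup_{r\in I}|\sqrt{n}R_n(r)|\to 0$ in probability. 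Here I would exploit \eqref{cond:h_suppl}, \eqref{cond:contr_G} and \eqref{eq:G_lip} to control the increments of the degenerate $U$-process in $r$ (and, via the shifts, to smooth the indicator by integrating against the Gaussian density), combine a moment bound on these increments with a chaining argument, and thereby extend the degenerate-$U$-statistic estimates of \cite{soulier:2001}. A Slutsky-type ``converging together'' argument then combines $\sqrt{n}W_n\Rightarrow W$ with $\sup_r|\sqrt{n}R_n(r)|\to 0$ to give the stated convergence.

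\textbf{Main obstacle.} The delicate part is the uniform control of the degenerate remainder over $r\in I$. The pointwise variance bound is routine once the $(1,1)$ leading term is isolated, but producing an increment estimate strong enough to run a chaining argument across the whole interval, while the kernel is merely an indicator, is the real difficulty; conditions (i)--(ii) and the extension of \cite{soulier:2001} are precisely what make this possible.
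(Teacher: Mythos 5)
Your plan follows the paper's own proof almost step for step: the same Hoeffding decomposition \eqref{eq:hoeff_decomp}, the same structural observation that $m=2$ forces $\alpha_{1,0}(r)=\alpha_{0,1}(r)=0$ so that each $h_1(\cdot,r)-U(r)$ has Hermite rank at least $2$, a CLT for the linear part $\sqrt{n}\,W_n$, and uniform negligibility of $\sqrt{n}\,R_n$ via an increment moment bound plus chaining. For the linear part your tools differ slightly but harmlessly: you use Breuer--Major and Cram\'er--Wold for the finite-dimensional laws and a direct second-moment bound $\PE[(\sqrt{n}(W_n(t)-W_n(s)))^2]\leq C|t-s|^2$ (then Cauchy--Schwarz and a Billingsley-type criterion) for tightness, whereas the paper obtains both at once from Theorem 9 of Arcones (1994) applied to the class $\{h_1(\cdot,r):r\in I\}$, verifying the bracketing-entropy condition with \eqref{assum:h_1_Lipschitz} and the monotonicity of $h_1$ in $r$. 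Both routes are sound, and your covariance computation recovers \eqref{eq:cov_struct} correctly.

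The gap is in the remainder. The pointwise statement is fine (the paper just cites Corollary 2 of Dehling--Taqqu (1989) for $R_n(r)=O_P(n^{-D}L(n))$, so $\sqrt{n}R_n(r)\to 0$ when $D>1/2$), but the uniform statement $\sup_{r\in I}\sqrt{n}|R_n(r)|=o_P(1)$ --- which you yourself flag as the main obstacle --- is only announced, not proved. In the paper this occupies Lemmas \ref{lem:reste1} and \ref{eq:maj_Rn}: one must establish $\PE[\{R_n(t)-R_n(s)\}^2]\leq C|t-s|/n^{1+\alpha}$ and then apply the chaining lemma of Borovkova--Burton--Dehling (their Lemma 5.2), which additionally uses the monotonicity of $h$, $h_1$, $U$ in $r$ and a separate increment bound on $W_n$ (via Arcones' inequality). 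The increment bound on $R_n$ is the genuinely hard step, and your sketch does not say how it would actually be carried out: because the kernel is an indicator, Hermite coefficients cannot be bounded by differentiation, so the paper convolves $J_{s,t}$ with a Gaussian (your ``smoothing'' remark), applies Lemma \ref{lem:controle_hermite} at the cost of a factor $\varepsilon^{-6}$ which must be balanced against the smoothing error $\varepsilon^{1/2}$ by taking $\varepsilon=n^{-\nu}$; it needs an extension of Soulier's correlation inequality to vector arguments (Lemma \ref{lem:ext:soulier}) for the index configurations of cardinality 3, a Gaussian decorrelation device (Lemma \ref{l:bar-gamma}) together with Taqqu's diagram and hypercontractivity bounds for cardinality 4, and a separate argument when the correlations $\rho(k)$ are not uniformly below the threshold $1/13$. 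Without some version of this machinery, your outline is a correct skeleton of the paper's argument rather than a proof.
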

\begin{proof}[Proof of Theorem \ref{theo:U_n_D>1/2}]
The proof of the theorem follows from the decomposition
(\ref{eq:hoeff_decomp}) and Lemmas \ref{lem:O_P(1)} and
\ref{lem:reste1}, given in Section \ref{subsec:theo1}. Lemma
\ref{lem:O_P(1)} states that $\{\sqrt{n} W_n(r), r\in I\}$ converges weakly in the space of cadlag
functions $\mathcal{D}(I)$ equipped with the topology of uniform
convergence. Lemma \ref{lem:reste1} states that $\sup_{r\in I} \sqrt{n}
|R_n(r)|=o_P(1)$. Its proof uses Lemmas \ref{eq:maj_Rn}, 
\ref{lem:controle_hermite} and \ref{lem:ext:soulier}.
\end{proof}

\begin{remark}
The examples of Section 4 satisfy the conditions (15) to (19), \textit{e.g.} through
the choice $G(x,y)=(x+y)/2$. More generally, suppose either:\\
(i) $G$ is linear.\\
(ii) The function $G$ can be written as $G(x,y)=g(L(x,y))$
where $L(x,y)=\alpha x + \beta y$ is some linear function of $(x,y)$, 
$\alpha$ and $\beta$ in $\mathbb{R}$ are such that
$|\alpha|=|\beta|$ and 
$g$ is an even function satisfying for some $\lambda_g>0$:
$\forall x,\; t\leq g(x)\leq s \implies \lambda_g t \leq |x|\leq \lambda_g s.$\\
(iii) $G\geq 0$ and satisfies the triangle inequality: 
$G(x+x',y+y')\leq G(x,y)+G(x',y')$
and there exists some constant $C$ such that for all $(x,y)$, 
$G(x,y)\leq C(|x|+|y|).$\\
Then Condition (i) implies Conditions (15) to (19),
Condition (ii) implies Conditions (15), (17) and (19) and 
Condition (iii) implies Conditions (16) and (18). 
The proofs are based on techniques similar to the verification of (27) in Section 4.1 
where $G(x,y)=(x+y)/2$. 
\end{remark}

\begin{remark}
The set $I$ in the previous theorem may be equal to
$[-\infty,+\infty]$ which involves the two-point compactification of
the real line. Since $[-\infty,+\infty]$ is compact, all functions
in $\mathcal{D}([-\infty,+\infty])$ are bounded. In fact, that space is
isomorphic to $\mathcal{D}[0,1]$.
\end{remark}
When $D<1/m$, $W_n$ and $R_n$ are not the leading
term and the remainder term, respectively.
Note that, on one hand, for a fixed $r$,
Corollary 2 of \cite{Dehling:Taqqu:1989} gives $R_n(r)=O_P(n^{-D}L(n))$
for any $D$ in $(0,1)$. On the other hand, if $D<1/\tau$,
where $\tau$ is defined in (\ref{eq:tau}), Theorem 6 of
\cite{Arcones:1994} implies that $W_n(r)=O_P( n^{-\tau D/2}L(n)^{\tau/2})$
and if $D$ is in $(1/\tau,1/m)$, $W_n(r)=O_P(n^{-1/2})$ by Theorem 4 of \cite{Arcones:1994}.
Thus, if for instance, $\tau=m=2$, $W_n(r)$ and $R_n(r)$ may be of the
same order $O_P(n^{-D}L(n))$.
Hence, to study the case $D<1/m$, we shall introduce a different
decomposition of $U_n(\cdot)$
based on the expansion of $h$ in the basis of Hermite polynomials given by (\ref{eq:hermite_decomp}).
Thus, $U_n(r)$ defined in (\ref{def:U_n})
can be rewritten as follows
\begin{equation}\label{eq:herm_decomp_Un}
n(n-1) \{U_n(r)-U(r)\}=\widetilde{W}_n(r)+\widetilde{R}_n(r)\; ,
\end{equation}
where
\begin{equation}\label{def:W_n}
\widetilde{W}_n(r)=\sum_{1\leq i\neq j\leq n} \sum_{\stackrel{p,q\geq 0}{p+q\leq m}}\frac{\alpha_{p,q}(r)}{p!q!}
H_p(X_i) H_q(X_j)\; .
\end{equation}
Introduce also the Beta function
\begin{equation}\label{eq:betaf}
\B(\alpha,\beta)=\int_0^{\infty} y^{\alpha-1}(1+y)^{-\alpha-\beta}\rmd
y = \frac{\Gamma(\alpha)\Gamma(\beta)}{\Gamma(\alpha+\beta)}\; ,
\quad\alpha >0,\; \beta>0\; .
\end{equation}
The limit processes which appear in the next theorem are  the standard fractional Brownian motion
(fBm) $(Z_{1,D}(t))_{0\leq t\leq 1}$ and the Rosenblatt process
$(Z_{2,D}(t))_{0\leq t\leq 1}$.
They are defined through multiple Wiener-It\^o  integrals and given by
\begin{equation}\label{eq:fBm}
Z_{1,D}(t)=\int_{\rset}\left[\int_0^t (u-x)_+^{-(D+1)/2} \rmd u\right] \rmd B(x),\quad 0<D<1\; ,
\end{equation}
and
\begin{equation}\label{eq:rosenblatt}
Z_{2,D}(t)=\int'_{\rset^2}\left[\int_0^t (u-x)_+^{-(D+1)/2}
(u-y)_+^{-(D+1)/2} \rmd u\right] \rmd B(x) \rmd B(y),\quad 0<D<1/2\; ,
\end{equation}
where $B$ is the standard Brownian motion, see \cite{fox:taqqu:1987}.
The symbol $\int'$ means that the domain of integration excludes the
diagonal. Note that $Z_{1,D}$ and $Z_{2,D}$ are dependent but uncorrelated.
The following theorem treats the case $D<1/m$ where $m=1$ or 2.
\begin{theo}\label{theo:D<1/2}
Let $I$ be a compact interval of $\rset$.
Suppose that Assumption (A\ref{assum:long-range}) holds with $D<1/m$, where $m=1$
or 2 is
the Hermite rank of the class of functions $\{h(\cdot,\cdot,r)-U(r)\; ,
r\in I\}$ as defined in (\ref{eq:hermite_rank}).
Assume the following:
\begin{enumerate}[(i)]
\item There exists a positive constant C such that,
for all $k\geq 1$ and for all $s,t$ in $I$,
\begin{equation}\label{eq:lip_h}
\PE[|h(X_1,X_{1+k},s)-h(X_1,X_{1+k},t)|]\leq C |t-s|\; .
\end{equation}
\item $U$ is a Lipschitz function.
\item The function $\widetilde{\Lambda}$ defined, for all $s$ in $I$, by
\begin{equation}\label{eq:lambda_tilde}
\widetilde{\Lambda}(s)=\PE[h(X,Y,s)(|X|+|XY|+|X^2-1|)]\; ,
\end{equation}
where $X$ and $Y$ are independent standard Gaussian random variables,
is also a Lipschitz function.
\end{enumerate}
Then,
$$
\left\{n^{mD/2}L(n)^{-m/2}\left(U_n(r)-U(r)\right); r\in I\right\}
$$
converges weakly in the space of cadlag functions $\mathcal{D}(I)$, equipped with the
topology of uniform convergence, to
$$
\{2\alpha_{1,0}(r)k(D)^{-1/2}Z_{1,D}(1); r\in I\}\; , \quad\textrm{if } m=1\;,
$$
and to
$$
\{k(D)^{-1}\left[\alpha_{1,1}(r)Z_{1,D}(1)^2+\alpha_{2,0}(r)Z_{2,D}(1)\right];
r\in I\}\; , \quad\textrm{if } m=2\;,
$$
where the fractional Brownian motion $Z_{1,D}(\cdot)$ and the
Rosenblatt process $Z_{2,D}(\cdot)$ are defined in (\ref{eq:fBm}) and
(\ref{eq:rosenblatt}) respectively and where
\begin{equation}\label{e:kD}
k(D)=\emph{\B}((1-D)/2,D)\; ,
\end{equation}
where $\emph{\B}$ is the Beta
function defined in (\ref{eq:betaf}).
\end{theo}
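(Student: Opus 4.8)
The plan is to start from the Hermite-type decomposition (\ref{eq:herm_decomp_Un}), namely $n(n-1)\{U_n(r)-U(r)\}=\widetilde{W}_n(r)+\widetilde{R}_n(r)$, and to show that after the normalization $n^{mD/2}L(n)^{-m/2}$ the leading part $\widetilde{W}_n$ produces the announced limit while the remainder $\widetilde{R}_n$ is uniformly negligible. First I would make the leading term explicit. Evaluating the double sum in (\ref{def:W_n}) (whose constant term cancels the $-U(r)$ on the left) and using the symmetry $\alpha_{p,q}=\alpha_{q,p}$, one finds when $m=1$ that $\widetilde{W}_n(r)=2(n-1)\alpha_{1,0}(r)\sum_{i=1}^n X_i$, and when $m=2$ (so that $\alpha_{1,0}\equiv\alpha_{0,1}\equiv0$ by the very definition of the Hermite rank of the class being $2$) that
\[
\widetilde{W}_n(r)=(n-1)\,\alpha_{2,0}(r)\sum_{i=1}^n H_2(X_i)
+\alpha_{1,1}(r)\Big(\big(\textstyle\sum_{i=1}^n X_i\big)^2-\sum_{i=1}^n X_i^2\Big).
\]
Dividing by $n(n-1)$ and inserting the normalization reduces the problem to the asymptotics of the partial sums $\sum_i X_i$ and $\sum_i H_2(X_i)$.

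Second, I would invoke the classical non-central limit theorems for Hermite functionals of a long-range dependent Gaussian sequence (in the spirit of \cite{Dehling:Taqqu:1989, fox:taqqu:1987}): under Assumption (A\ref{assum:long-range}) with $D<1/m$,
\[
\frac{1}{n^{1-D/2}L(n)^{1/2}}\sum_{i=1}^n X_i \cd k(D)^{-1/2}Z_{1,D}(1),\qquad
\frac{1}{n^{1-D}L(n)}\sum_{i=1}^n H_2(X_i)\cd k(D)^{-1}Z_{2,D}(1),
\]
with $Z_{1,D}$, $Z_{2,D}$ and $k(D)$ as in (\ref{eq:fBm}), (\ref{eq:rosenblatt}) and (\ref{e:kD}); for $m=2$ the two normalized sums converge jointly because they belong to the first and second Wiener chaoses of the same Gaussian sequence. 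Substituting into the explicit form of $\widetilde{W}_n$ and noting that $n^{-1}\sum_i X_i^2=O_P(1)$ is negligible against the rate $n^{-D}L(n)$ yields, for each fixed $r$, the scalar limits $2\alpha_{1,0}(r)k(D)^{-1/2}Z_{1,D}(1)$ and $k(D)^{-1}[\alpha_{1,1}(r)Z_{1,D}(1)^2+\alpha_{2,0}(r)Z_{2,D}(1)]$.

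Third, I would upgrade this to weak convergence in $\mathcal{D}(I)$. The crucial structural point is that the limiting field is degenerate in $r$: it is a deterministic coefficient function times a single random vector not depending on $r$. Hence finite-dimensional convergence follows from the scalar convergence above together with the exact values of the coefficients $\alpha_{p,q}(r)$, and tightness reduces to equicontinuity of those coefficients. This is where condition (iii) enters: since $r\mapsto h(x,y,r)$ is nondecreasing, for $s<t$ one has $|\alpha_{1,0}(s)-\alpha_{1,0}(t)|\leq\PE[|h(X,Y,s)-h(X,Y,t)|\,|X|]\leq|\widetilde{\Lambda}(s)-\widetilde{\Lambda}(t)|$, and likewise for $\alpha_{1,1}$ and $\alpha_{2,0}$ with the weights $|XY|$ and $|X^2-1|$, so the Lipschitz property of $\widetilde{\Lambda}$ forces these coefficients to be Lipschitz, hence continuous. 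The limiting paths $r\mapsto g(r)\xi$ then lie in $\mathcal{D}(I)$ and the product map is continuous, so Slutsky's lemma and the continuous mapping theorem deliver weak convergence of the normalized leading term to the stated limit.

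Fourth and last, I would prove $\sup_{r\in I}|\widetilde{R}_n(r)|=o_P\!\big(n^{2-mD/2}L(n)^{m/2}\big)$. For fixed $r$ the remainder collects the Hermite terms of total order strictly greater than $m$, which by the Arcones/Dehling--Taqqu estimates recalled before the theorem (\cite{Arcones:1994, Dehling:Taqqu:1989}) are of strictly smaller order than the leading term. The genuine difficulty is the uniformity over the interval $I$: one must bound the supremum over $r$ of a random field living in the higher-order Wiener chaoses, which I would handle by a chaining argument controlling the increments $\widetilde{R}_n(s)-\widetilde{R}_n(t)$ in $L^1$ or $L^2$ through the Lipschitz-type hypotheses (\ref{eq:lip_h}) and the Lipschitz continuity of $U$. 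This is exactly where conditions (i)--(iii) do their essential work and where I would extend the uniform bounds of \cite{soulier:2001}. I expect this remainder step to be the main obstacle; the finite-dimensional and leading-term analysis is comparatively routine once the partial-sum limit theorems and the continuity of the coefficient functions are in hand.
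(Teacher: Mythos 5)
Your proposal follows the paper's own route: the decomposition (\ref{eq:herm_decomp_Un}), the explicit forms of $\widetilde{W}_n$ for $m=1,2$, reduction to partial-sum limit theorems, condition (iii) used to make $r\mapsto\alpha_{p,q}(r)$ Lipschitz, and a chaining argument for the remainder. Your step 3 is in fact a clean improvement on the paper's tightness computation: since the normalized leading term is a finite sum of deterministic Lipschitz functions times random variables not depending on $r$, process convergence follows by continuous mapping once the random variables converge jointly. But two steps are genuinely incomplete. The first is precisely that joint convergence: your justification for $m=2$ --- ``they converge jointly because they belong to the first and second Wiener chaoses of the same Gaussian sequence'' --- is not a proof. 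Chaos membership gives uncorrelatedness, not asymptotic joint laws; marginal convergence of $n^{D/2-1}L(n)^{-1/2}\sum_i X_i$ and $n^{D-1}L(n)^{-1}\sum_i H_2(X_i)$ does not identify the joint law of the limiting pair, and that joint law matters here because $Z_{1,D}(1)^2$ and $Z_{2,D}(1)$ are \emph{dependent} (both are Wiener--It\^o integrals against the same Brownian motion). No Peccati--Tudor-type automatic result applies, since the limits are non-Gaussian. The paper fills exactly this hole with Lemma \ref{lem:cumulants}: for arbitrary $a,b$ the quadratic form $an\sum_i(X_i^2-1)+b\sum_{i,j}X_iX_j$ is shown, via the cumulant formula for Gaussian quadratic forms (\cite{stuart:ord:1987}) and Proposition 4.2 of \cite{fox:taqqu:1987}, to converge to $aZ_{2,D}(1)+bZ_{1,D}(1)^2$, and the Cram\'er--Wold device then yields the joint and finite-dimensional convergence. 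You need this lemma or an equivalent multivariate non-central limit theorem; citing the marginal results of \cite{taqqu:1975} and \cite{Dehling:Taqqu:1989} is not enough.

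The second gap is the remainder step, which you rightly flag as the main obstacle but leave as a sketch --- and the sketch as stated would fail. The increment bound one can actually prove is $a_n^2\,\PE[(\widetilde{R}_n(t)-\widetilde{R}_n(s))^2]\leq C|t-s|\,n^{-\alpha}$ with $a_n=n^{mD/2-2}L(n)^{-m/2}$ (the paper's Lemma \ref{lem:reste}, itself a delicate argument using the decorrelating transformation of Lemma \ref{l:bar-gamma} and Hermite-moment estimates of \cite{taqqu:1977}-type). Because the exponent of $|t-s|$ is only $1$, and because the increments are controlled only in $L^2$ (not sub-Gaussian), neither Kolmogorov-type tightness criteria nor Dudley-style chaining can convert this into control of $\sup_{r\in I}a_n|\widetilde{R}_n(r)|$: an $L^2$ bound on increments says nothing about behavior between grid points of an uncountable index set for a jump process. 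The paper's Lemma \ref{lem:reste_D<1/2} resolves this with a monotonicity sandwich (Lemma 5.2 of \cite{borovkova:burton:dehling:2001}): since $U_n$ and $U$ are nondecreasing in $r$, the oscillation of $\widetilde{R}_n$ between grid points is dominated by grid-scale increments of $\widetilde{R}_n$, of the partial-sum processes (controlled through $\widetilde{\Lambda}$ and Lemma \ref{lem:partie_qui_compte}), and of $U$. You invoke monotonicity only for the coefficient functions, not for the remainder; without this device (or a substitute), your conditions (i)--(iii) do not yield the uniform statement $\sup_{r\in I}a_n|\widetilde{R}_n(r)|=o_P(1)$, and the theorem does not follow.
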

The proof of Theorem \ref{theo:D<1/2} is given in Section \ref{subsec:theo2:proof}.
\begin{remark}
The processes $Z_{1,D}$ and $Z_{2,D}$ are self-similar with mean
0. They are, however, not normalized. One has
$$
\PE[Z_{1,D}(t) Z_{1,D}(s)]=\PE[Z^2_{1,D}(1)] \frac12
\left\{t^{2H_1}+s^{2H_1}-|t-s|^{2H_1}\right\}\; ,
$$
$$
\PE[Z_{2,D}(t) Z_{2,D}(s)]=\PE[Z^2_{2,D}(1)] \frac12
\left\{t^{2H_2}+s^{2H_2}-|t-s|^{2H_2}\right\}\; ,
$$
where $H_1=1-D/2\in (0,1/2)$, $H_2=1-D\in (0,1/2)$ and
\begin{equation}\label{e:VarFBMa}
\PE[Z^2_{1,D}(1)]=\frac{2k(D)}{(-D+1)(-D+2)}\; ,
\end{equation}
\begin{equation}\label{e:VarRos}
\PE[Z^2_{2,D}(1)]=\frac{4k(D)^2}{(-2D+1)(-2D+2)}\;,
\end{equation}
with $k(D)$ defined by (\ref{e:kD}). See Remark \ref{r:moments}
below for justification. The non-Gaussian random variables
$Z^2_{1,D}(1)$ and $Z_{2,D}(1)$ are dependent. Their joint cumulants
are given in (\ref{eq:cumulants}) in the supplemental article \cite{supplement}.
\end{remark}
\begin{remark}
The results of Theorem \ref{theo:D<1/2} can be extended to
the two-parameter process
$\{U_{[nt]}(r)-U(r); r \in I, 0\leq t\leq 1\}$. One can show that
$$
\left\{\frac{n^{mD/2}}{L(n)^{m/2}}\left(U_{[nt]}(r)-U(r)\right);\; r\in I, 0\leq t\leq 1\right\}
$$
converges weakly in $\mathcal{D}(I\times [0,1])$, equipped with the
topology of uniform convergence, to
$$
\{2\alpha_{1,0}(r)k(D)^{-1/2}Z_{1,D}(t);\; r \in I, 0\leq t\leq 1\}\; ,\quad
\textrm{if } m=1\;,
$$
and to
$$
\{k(D)^{-1}\left[\alpha_{1,1}(r)Z_{1,D}(t)^2+\alpha_{2,0}(r)Z_{2,D}(t)\right];\;
r \in I, 0\leq t\leq 1\}\; , \quad\textrm{if } m=2\;.
$$
\end{remark}
\section{Asymptotic behavior of empirical quantiles}
\label{s:U}
We shall apply Theorems \ref{theo:U_n_D>1/2} and \ref{theo:D<1/2}
in the preceding section to empirical quantiles.
Recall that if $V:I\longrightarrow [0,1]$ is a non-decreasing cadlag
function, where $I$ is an interval of $\rset$, then its generalized
inverse $V^{-1}$ is defined by
$V^{-1}(p)=\inf\{r\in I,\; V(r)\geq p\}$.
This applies to $U_n(r)$ and $U(r)$
since these are non-decreasing functions of $r$.
We derive in the following corollaries the asymptotic
behavior of the empirical quantile $U_n^{-1}(\cdot)$ using Theorems
\ref{theo:U_n_D>1/2} and \ref{theo:D<1/2}.
\begin{corollary}\label{coro:quantile_1}
Let $p$ be a fixed real number in $(0,1)$. Assume that the conditions of Theorem
\ref{theo:U_n_D>1/2} are satisfied. Suppose also that there exists
some $r$ in $I$ such that $U(r)=p$, that $U$ is differentiable at
$r$ and that $U'(r)$ is non null. Then, as $n$ tends to infinity,
$$
\sqrt{n}(U_n^{-1}(p)-U^{-1}(p))\stackrel{d}{\longrightarrow}
-W(U^{-1}(p))/U'(U^{-1}(p))\; ,
$$
where $W$ is a Gaussian process having a covariance structure given by
(\ref{eq:cov_struct}).
\end{corollary}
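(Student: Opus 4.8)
The plan is to treat this as a quantile-inversion (functional delta method) argument built directly on the uniform weak convergence established in Theorem \ref{theo:U_n_D>1/2}. Write $r_0 = U^{-1}(p)$; since $U$ is nondecreasing and differentiable at $r_0$ with $U'(r_0)>0$, one has $U(r_0)=p$, and $r_0$ is the unique solution, lying in the interior of $I$. Set $\xi_n(r)=\sqrt{n}\,(U_n(r)-U(r))$, which by Theorem \ref{theo:U_n_D>1/2} converges weakly in $(\mathcal{D}(I),\|\cdot\|_\infty)$ to the centered Gaussian process $W$ with covariance (\ref{eq:cov_struct}).

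First I would use the elementary switching relation for the generalized inverse of a nondecreasing cadlag function: since $U_n$ is nondecreasing and right-continuous, $\{U_n^{-1}(p)\le r\}=\{U_n(r)\ge p\}$ for every $r$. Fixing $x\in\rset$ and evaluating at $r=r_0+x/\sqrt{n}$ gives
\begin{equation*}
\PP\!\left(\sqrt{n}\,(U_n^{-1}(p)-r_0)\le x\right)
= \PP\!\left(\xi_n(r_0+x/\sqrt{n})\ge a_n\right),
\qquad a_n := \sqrt{n}\,\bigl(p-U(r_0+x/\sqrt{n})\bigr).
\end{equation*}
Because $U(r_0)=p$ and $U$ is differentiable at $r_0$, the deterministic drift satisfies $a_n\to -U'(r_0)\,x$.

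The main work, and the hard part, will be to show that the random quantity $\xi_n(r_0+x/\sqrt{n})$, in which the process is sampled at an argument that itself moves with $n$, converges in distribution to $W(r_0)$: marginal convergence of $\xi_n$ is not enough, and one must combine the full sup-norm weak convergence of Theorem \ref{theo:U_n_D>1/2} with sample-path continuity of the limit $W$ at $r_0$. I would establish this via the Skorokhod representation theorem: realize $\xi_n$ and $W$ on a common probability space so that $\|\xi_n-W\|_\infty\to 0$ almost surely, and bound
\begin{equation*}
\bigl|\xi_n(r_0+x/\sqrt{n})-W(r_0)\bigr|
\le \|\xi_n-W\|_\infty + \bigl|W(r_0+x/\sqrt{n})-W(r_0)\bigr|,
\end{equation*}
the second term vanishing almost surely by continuity of $W$ at $r_0$. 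That continuity itself follows from the covariance structure: the Lipschitz bound (\ref{assum:h_1_Lipschitz}) gives $\|h_1(\cdot,s)-h_1(\cdot,t)\|_{L^2_\varphi}\le C|s-t|$, and since $D>1/2$ with $m=2$ (so $\tau D\ge 2D>1$) the subordinated covariances are summable, whence $\PE[(W(s)-W(t))^2]\le C\,|s-t|^2$; Gaussianity and Kolmogorov's criterion then yield a continuous modification.

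Finally I would assemble the pieces with a Slutsky-type argument via the continuous mapping theorem applied to $(y,b)\mapsto \1_{\{y\ge b\}}$. Since $\xi_n(r_0+x/\sqrt{n})\cd W(r_0)$, $a_n\to a:=-U'(r_0)x$, and the Gaussian law of $W(r_0)$ has no atom at $a$, the displayed probability converges to $\PP(W(r_0)\ge -U'(r_0)x)$. Using $U'(r_0)>0$, this equals $\PP(-W(r_0)/U'(r_0)\le x)$; as $x$ was arbitrary and the limiting distribution function is continuous, this identifies the weak limit of $\sqrt{n}\,(U_n^{-1}(p)-U^{-1}(p))$ as $-W(U^{-1}(p))/U'(U^{-1}(p))$. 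A minor additional check is that $U_n^{-1}(p)$ stays away from the endpoints of $I$ with probability tending to one, so that the switching relation is applied at interior points; this follows from $U'(r_0)>0$ together with the uniform convergence $U_n\to U$.
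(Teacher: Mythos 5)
Your proof is correct, but it takes a genuinely different route from the paper. The paper disposes of this corollary in two lines via the functional delta method: it cites Lemma 21.3 of van der Vaart (Hadamard differentiability of the map $V\mapsto V^{-1}(p)$, tangentially to functions continuous at $V^{-1}(p)$, with derivative $h\mapsto -h(V^{-1}(p))/V'(V^{-1}(p))$) and Theorem 20.8 there, obtaining the asymptotic linearization $\sqrt{n}\,(U_n^{-1}(p)-U^{-1}(p))=-\sqrt{n}\,(U_n-U)(U^{-1}(p))/U'(U^{-1}(p))+o_P(1)$ and then invoking Theorem \ref{theo:U_n_D>1/2}. You instead run the classical switching-relation argument for quantiles, computing the limiting c.d.f.\ directly: $\{U_n^{-1}(p)\le r\}=\{U_n(r)\ge p\}$, a moving-argument evaluation at $r_0+x/\sqrt{n}$, Skorokhod representation plus a.s.\ continuity of $W$ to get $\xi_n(r_0+x/\sqrt{n})\cd W(r_0)$, and a Slutsky step. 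What the paper's route buys is brevity and a stronger by-product (the Bahadur-type linear representation, which transfers the entire limit theory, not just one-dimensional convergence). What your route buys is self-containedness, and it makes explicit a point the paper leaves unverified: applying van der Vaart's Lemma 21.3 tangentially requires the limit $W$ to have a.s.\ continuous paths at $U^{-1}(p)$, which you actually prove from (\ref{assum:h_1_Lipschitz}), the Hermite rank $\tau\geq 2$ and $2D>1$ (giving $\PE[(W(s)-W(t))^2]\leq C|t-s|^2$ and Kolmogorov's criterion). Two minor caveats, neither a gap: since $(\mathcal{D}(I),\supnorm{\cdot})$ is nonseparable, the representation theorem you need is the Dudley--Wichura version, applicable because the limit concentrates on the separable subspace of continuous functions (which you have just established); and in the degenerate case $\Var(W(r_0))=0$ the no-atom argument should be restricted to $x\neq 0$, i.e.\ to continuity points of the limit law, which is all that convergence in distribution requires.
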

\begin{proof}[Proof of Corollary \ref{coro:quantile_1}]
By Lemma 21.3  in \cite{vdv:2000}, the functional $T: V\mapsto V^{-1}(p)$
is Hadamard differentiable at $V$ tangentially to the set of functions $h$ in $\mathcal{D}([0,1])$
with derivative $T'_{V}(h)=-h(V^{-1}(p))/V'(V^{-1}(p))$.
Applying the functional Delta method (Theorem 20.8 in \cite{vdv:2000})
thus yields
\begin{multline*}
\sqrt{n}(U_n^{-1}(p)-U^{-1}(p))=T'_{U}\{\sqrt{n}(U_n-U)\}+o_P(1)\\
=-\sqrt{n}\frac{(U_n-U)(U^{-1}(p))}{U'(U^{-1}(p))}+o_P(1)\; .
\end{multline*}
The corollary then follows from Theorem \ref{theo:U_n_D>1/2}.
\end{proof}
\begin{corollary}\label{coro:quantile_2}
Let $p$ be a fixed real number in $(0,1)$. Assume that the conditions of Theorem
\ref{theo:D<1/2} are satisfied.
Suppose also that there exists
some $r$ in $I$ such that $U(r)=p$, that $U$ is differentiable at
$r$ and that $U'(r)$ is non null.
Then, as $n$ tends to infinity,
$$
\frac{n^{mD/2}}{L(n)^{m/2}}(U_n^{-1}(p)-U^{-1}(p))
$$
converges in distribution to
$$
-2k(D)^{-1/2}\frac{\alpha_{1,0}(U^{-1}(p))}{U'(U^{-1}(p))}Z_{1,D}(1)\; ,\textrm{ if }
m=1\; ,
$$
and to
$$
-k(D)^{-1}\left\{\alpha_{1,1}(U^{-1}(p))Z_{1,D}(1)^2+\alpha_{2,0}(U^{-1}(p))Z_{2,D}(1)\right\}/U'(U^{-1}(p))\;,
\textrm{ if } m=2\; ,
$$
where $Z_{1,D}(\cdot)$ and $Z_{2,D}(\cdot)$ are defined in (\ref{eq:fBm}) and
(\ref{eq:rosenblatt}) respectively, $k(D)$ in (\ref{e:kD}) and $\alpha_{p,q}(\cdot)$ is defined
in (\ref{e:alpha}).
\end{corollary}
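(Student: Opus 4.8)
The plan is to follow line for line the argument given for Corollary \ref{coro:quantile_1}, simply replacing the central limit result of Theorem \ref{theo:U_n_D>1/2} by the non-central one of Theorem \ref{theo:D<1/2}. As in that proof, the quantile functional $T: V\mapsto V^{-1}(p)$ is, by Lemma 21.3 in \cite{vdv:2000}, Hadamard differentiable at $U$ tangentially to the set of functions in $\mathcal{D}(I)$ that are continuous at $U^{-1}(p)$, with derivative $T'_U(h)=-h(U^{-1}(p))/U'(U^{-1}(p))$. Here I use exactly the standing hypotheses of the corollary: that there is $r$ with $U(r)=p$, that $U$ is differentiable at $U^{-1}(p)$, and that $U'(U^{-1}(p))\neq 0$, so that the derivative $T'_U$ is well defined.

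First I would invoke Theorem \ref{theo:D<1/2} to get that the rescaled $U$-process $\{n^{mD/2}L(n)^{-m/2}(U_n(r)-U(r));\, r\in I\}$ converges weakly in $\mathcal{D}(I)$ under the uniform topology to the limit process described there. Since the functional Delta method (Theorem 20.8 in \cite{vdv:2000}) applies to an arbitrary weak limit and not only to Gaussian ones, it yields
$$\frac{n^{mD/2}}{L(n)^{m/2}}(U_n^{-1}(p)-U^{-1}(p))=T'_U\Big\{\frac{n^{mD/2}}{L(n)^{m/2}}(U_n-U)\Big\}+o_P(1)=-\frac{n^{mD/2}}{L(n)^{m/2}}\frac{(U_n-U)(U^{-1}(p))}{U'(U^{-1}(p))}+o_P(1)\; .$$
Evaluating the limit process of Theorem \ref{theo:D<1/2} at the single point $r=U^{-1}(p)$ and dividing by $-U'(U^{-1}(p))$ then produces precisely the two asserted limits, the one with $2\alpha_{1,0}(U^{-1}(p))k(D)^{-1/2}Z_{1,D}(1)$ in the case $m=1$ and the one involving $\alpha_{1,1}(U^{-1}(p))Z_{1,D}(1)^2+\alpha_{2,0}(U^{-1}(p))Z_{2,D}(1)$ in the case $m=2$.

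The only point that needs genuine checking is that the sample paths of the limit process lie in the tangent set on which $T$ is Hadamard differentiable, that is, that they are continuous at $U^{-1}(p)$. This is in fact easier than in Corollary \ref{coro:quantile_1}, because the limit process \emph{factorizes} as a deterministic function of $r$ (a linear combination of $\alpha_{1,0}(r)$, or of $\alpha_{1,1}(r)$ and $\alpha_{2,0}(r)$) times the fixed random variables $Z_{1,D}(1)$, $Z_{1,D}(1)^2$ and $Z_{2,D}(1)$, none of which depend on $r$. Hence path continuity reduces to continuity of the coefficient maps $r\mapsto\alpha_{p,q}(r)$ at $U^{-1}(p)$, which follows from the Lipschitz hypotheses (ii) and (iii) of Theorem \ref{theo:D<1/2}: the Lipschitz control of $\widetilde{\Lambda}$ in \eqref{eq:lambda_tilde} bounds the increments of $\alpha_{1,0}$, $\alpha_{1,1}$ and $\alpha_{2,0}$ through the dominating weights $|X|+|XY|+|X^2-1|$. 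I do not expect any serious obstacle beyond this continuity verification, since the remainder of the argument is a verbatim transcription of the delta-method computation already carried out for Corollary \ref{coro:quantile_1}.
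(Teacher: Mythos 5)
Your proposal is correct and follows exactly the route the paper intends: the paper omits this proof, stating only that it uses ``similar arguments as the proof of Corollary \ref{coro:quantile_1},'' which is precisely your plan of combining Lemma 21.3 and Theorem 20.8 of \cite{vdv:2000} with the weak convergence from Theorem \ref{theo:D<1/2} in place of Theorem \ref{theo:U_n_D>1/2}. Your extra verification that the limit paths lie in the tangent set, via continuity of $r\mapsto\alpha_{p,q}(r)$ controlled by the Lipschitz function $\widetilde{\Lambda}$, is sound (it is the same bound $|\alpha_{p,q}(t)-\alpha_{p,q}(s)|\leq|\widetilde{\Lambda}(t)-\widetilde{\Lambda}(s)|$ the paper records as (\ref{e:AL}) in the supplement) and only makes explicit a point the paper leaves implicit.
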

The proof of Corollary \ref{coro:quantile_2} is based on similar
arguments as the proof of Corollary \ref{coro:quantile_1} and is
thus omitted.
\section{Applications}\label{sec:appli}
We shall use the results established
in Sections \ref{sec:main} and \ref{s:U} to study the asymptotic properties
of several estimators based on $U$-processes in the long-range
dependence setting.
\subsection{Hodges-Lehmann estimator}
Consider the problem of estimating the location parameter
of a long-range dependent Gaussian process. Assume that
$(Y_i)_{i\geq 1}$ satisfy $Y_i=\theta+X_i$ where
$(X_i)_{i\geq 1}$ satisfy Assumption (A\ref{assum:long-range}).
To estimate the location parameter $\theta$,
\cite{hodges:lehmann:1963} suggest using the median of the average
of all pairs of observations. The statistic they propose is
\begin{multline*}
\hat{\theta}_{HL}=\textrm{median}\left\{\frac{Y_i+Y_j}{2}; 1\leq
  i<j\leq n\right\}\\
=\theta+\textrm{median}\left\{\frac{X_i+X_j}{2}; 1\leq
  i<j\leq n\right\}\; .
\end{multline*}
Define the $U$-process $U_n(r)$, $r\in\rset$ by (\ref{def:U_n}), where
$G(x,y)=(x+y)/2$.
The Hodges-Lehmann estimator may be then expressed as
$$
\hat{\theta}_{HL}=\theta+U^{-1}_n(1/2)\; .
$$
If $A$ and $B$ are independent standard Gaussian variables,
\begin{equation}\label{e:alpha_HL}
\alpha_{1,0}(r)=\alpha_{0,1}(r)=\PE[A\1_{\{A+B\leq 2r\}}]
=-\int_{\mathbb{R}}\varphi(2r-y)\varphi(y)\rmd y=-\varphi(r\sqrt{2})/\sqrt{2}\; ,
\end{equation}
using $x\varphi(x)=-\dot{\varphi}(x)$, where $\dot{\varphi}$ denotes
the first derivative of $\varphi$. The quantities in
(\ref{e:alpha_HL}) are different from 0 for all $r$ in $\rset$ since $\varphi$
is the p.d.f of a standard Gaussian random variable. Thus, the Hermite
rank $m$ of the class of functions $\{\1_{G(\cdot,\cdot)\leq r}-\alpha_{0,0}(r); r\in\rset\}$
is equal to 1.
In order to derive the asymptotic properties of $\hat{\theta}_{HL}$,
we now check the conditions of Theorem \ref{theo:D<1/2}.
Let us check Condition  (\ref{eq:lip_h}).
Note that for all $k\geq 1$, $X_1+X_{1+k}\sim\mathcal{N}(0,2(1+\rho(k)))$, thus if
$t\leq s$,
\begin{multline*}
\PE[h(X_1,X_{1+k},s)-h(X_1,X_{1+k},t)]=\Phi\left(\frac{\sqrt{2}s}{\sqrt{1+\rho(k)}}\right)
-\Phi\left(\frac{\sqrt{2}t}{\sqrt{1+\rho(k)}}\right)\\
\leq \frac{1}{\sqrt{\pi}}\frac{|t-s|}{\sqrt{1+\rho_{\star}}}\; ,
\end{multline*}
where $\Phi$ is the c.d.f of a standard Gaussian random variable
and $\rho_{\star}=\inf_{k}\rho(k)>-1$. Hence (\ref{eq:lip_h}) holds.
Similarly,
$|U(s)-U(t)|\leq|\Phi(\sqrt{2}s)-\Phi(\sqrt{2}t)|\leq\pi^{-1/2}|t-s|$
and hence $U$ is a Lipschitz function.
Let us now check Condition (\ref{eq:lambda_tilde}).
Note that, if $s\leq t$
\begin{multline*}
\int\int \1_{\{s<x+y\leq t\}}(|x|+|xy|+|x^2-1|)\varphi(x)\varphi(y) \rmd x
\rmd y =\\
\int\left(\int_{s-x}^{t-x}\varphi(y) \rmd y\right)|x|\varphi(x) \rmd x
+\int\left(\int_{s-x}^{t-x}|y|\varphi(y) \rmd y\right)|x|\varphi(x) \rmd x\\
+\int\left(\int_{s-x}^{t-x}\varphi(y) \rmd y\right)|x^2-1|\varphi(x) \rmd x\; .
\end{multline*}
Using that $\varphi(\cdot)$ and $|.|\varphi(\cdot)$ are bounded and that
the moments of Gaussian random variables are all finite, we get (\ref{eq:lambda_tilde}).
The assumptions of Theorem \ref{theo:D<1/2} are thus satisfied with
$m=1$ and hence we get that
$$
\left\{n^{D/2}L(n)^{-1/2}\left(U_n(r)-U(r)\right); -\infty\leq r\leq +\infty\right\}
$$
converges weakly in $D([-\infty,+\infty])$, equipped with the
sup-norm, to
$$
\{-\sqrt{2} k(D)^{-1/2}\varphi(r\sqrt{2}) Z_{1,D}(1); -\infty\leq r\leq +\infty\}\; .
$$
Here, $U(r)=\int\Phi(2r-x)\varphi(x) \rmd x$,
$U'(r)=2\int\varphi(2r-x)\varphi(x) \rmd x$,
$U(0)=1/2\int(\Phi(x)+\Phi(-x))\varphi(x)\rmd x=1/2$, $U^{-1}(1/2)=0$ and
$U'(U^{-1}(1/2))=U'(0)=1/\sqrt{\pi}$. Since, by (\ref{e:alpha_HL}),
$\alpha_{1,0}(U^{-1}(1/2))=\alpha_{1,0}(0)=-(2\sqrt{\pi})^{-1}$,
Corollary \ref{coro:quantile_2} implies that
\begin{equation}\label{e:limitHL}
n^{D/2} L(n)^{-1/2}(\hat{\theta}_{HL}-\theta)\stackrel{d}{\longrightarrow}
k(D)^{-1/2}Z_{1,D}(1)\; ,
\end{equation}
where using (\ref{e:VarFBMa}), $k(D)^{-1/2}Z_{1,D}(1)$ is a zero-mean Gaussian random
variable with variance $2(-D+1)^{-1}(-D+2)^{-1}$.
Let's now compare the asymptotic behavior of the Hodges-Lehmann
estimator with that of the sample mean.
Lemma 5.1
in \cite{taqqu:1975} shows that the sample mean $\bar{Y}_n=n^{-1}\sum_{i=1}^n
Y_i$ satisfies the following central limit theorem
$$
n^{D/2} L(n)^{-1/2}(\bar{Y}_n-\theta)
\stackrel{d}{\longrightarrow}k(D)^{-1/2}Z_{1,D}(1)\; .
$$
We have thus proved
\begin{prop}\label{prop:hodges-lehmann}
In the long-memory framework with $0<D<1$, the asymptotic behavior
of the Hodges-Lehmann estimator is Gaussian and given by
(\ref{e:limitHL}). It converges to $\theta$ at the same rate as the
sample mean with the same limiting distribution. There is no loss
of efficiency.
\end{prop}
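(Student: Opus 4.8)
The plan is to realize the Hodges-Lehmann estimator as a functional of the $U$-process and then invoke the quantile limit theorem already established. First I would record the identity $\hat{\theta}_{HL}-\theta=U_n^{-1}(1/2)$ for the $U$-process built from the symmetric kernel $G(x,y)=(x+y)/2$, so that the asymptotics of the estimator are exactly those of the empirical quantile $U_n^{-1}(1/2)$. This reduces everything to selecting and applying the correct one of Theorems \ref{theo:U_n_D>1/2} and \ref{theo:D<1/2}.

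The decisive preliminary step is to compute the Hermite rank of the class $\{h(\cdot,\cdot,r)-U(r),\,r\in\rset\}$. Using $x\varphi(x)=-\dot{\varphi}(x)$ one evaluates the first-order coefficient $\alpha_{1,0}(r)=\PE[A\1_{\{A+B\leq 2r\}}]=-\varphi(r\sqrt{2})/\sqrt{2}$, as in (\ref{e:alpha_HL}); since this never vanishes, the rank is $m=1$, and because $m=1<1/D$ holds for every $D\in(0,1)$, Theorem \ref{theo:D<1/2} is applicable throughout the whole long-memory range. This is also the structural reason the conclusion will turn out to be Gaussian rather than of Rosenblatt type.

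Next I would verify the three hypotheses of Theorem \ref{theo:D<1/2}. The Lipschitz bound (\ref{eq:lip_h}) and the Lipschitz property of $U$ follow from $X_1+X_{1+k}\sim\mathcal{N}(0,2(1+\rho(k)))$ together with a uniform bound on the rescaled Gaussian density, using $\rho_{\star}=\inf_k\rho(k)>-1$. The Lipschitz property of $\widetilde{\Lambda}$ in (\ref{eq:lambda_tilde}) is the most computational piece: after splitting the integral according to the three summands $|x|$, $|xy|$, $|x^2-1|$, each term is an integral of a bounded density over an interval of length $|t-s|$ against a factor with finite Gaussian moments, so each is $O(|t-s|)$. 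I expect this condition-checking to be the main, though essentially routine, obstacle.

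Finally, Theorem \ref{theo:D<1/2} with $m=1$ gives weak convergence of $n^{D/2}L(n)^{-1/2}(U_n(r)-U(r))$ in $\mathcal{D}([-\infty,+\infty])$ to a fractional Brownian limit, and Corollary \ref{coro:quantile_2} at $p=1/2$ transfers this to the quantile through the functional delta method. Here I would use the elementary evaluations $U(0)=1/2$, $U^{-1}(1/2)=0$, $U'(0)=1/\sqrt{\pi}\neq0$, and $\alpha_{1,0}(0)=-(2\sqrt{\pi})^{-1}$ to obtain (\ref{e:limitHL}). The assertion that there is no loss of efficiency then follows by comparing (\ref{e:limitHL}) with Lemma 5.1 of \cite{taqqu:1975}, which produces the identical limit $k(D)^{-1/2}Z_{1,D}(1)$ for the sample mean: same normalization, same rate, and same Gaussian law.
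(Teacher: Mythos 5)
Your proposal is correct and follows essentially the same route as the paper: the identity $\hat{\theta}_{HL}-\theta=U_n^{-1}(1/2)$ for $G(x,y)=(x+y)/2$, the computation $\alpha_{1,0}(r)=-\varphi(r\sqrt{2})/\sqrt{2}\neq 0$ giving Hermite rank $m=1$ (hence Theorem \ref{theo:D<1/2} applies for all $0<D<1$), verification of conditions (\ref{eq:lip_h}), the Lipschitz property of $U$ and of $\widetilde{\Lambda}$ by exactly the arguments the paper uses, then Corollary \ref{coro:quantile_2} at $p=1/2$ with $U^{-1}(1/2)=0$, $U'(0)=1/\sqrt{\pi}$, $\alpha_{1,0}(0)=-(2\sqrt{\pi})^{-1}$, and finally the comparison with the sample mean via Lemma 5.1 of \cite{taqqu:1975}. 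No gaps; the argument matches the paper step for step.
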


A similar result was proved in \cite{beran:1991} for location $M$-estimators.

\subsection{Wilcoxon-signed rank statistic}
Assume that
$(Y_i)_{i\geq 1}$ satisfy $Y_i=\theta+X_i$ where
$(X_i)_{i\geq 1}$ satisfy Assumption (A\ref{assum:long-range}).
The Wilcoxon-signed rank statistic first proposed by
\cite{wilcoxon:1945} can be used to test
the null hypothesis $(H_0)$:  `` $\theta=0$ ''  against the one-sided
alternative $(H_1)$: `` $\theta>0$ '', based on the observations $Y_1,\dots,Y_n$. It
is defined as
$$
T_n=\sum_{j=1}^n R_j \1_{\{X_j>0\}}\; ,
$$
where the $R_j$'s are the ranks of
$X_1,\dots,X_n$. Thus $T_n$ is the sum of the ranks of the positive
observations. Let us study this statistic under the null hypothesis.
One will reject the null hypothesis if the value of $T_n$ is large.
Following \cite{dewan:rao:2005}, $T_n$ can be written as
\begin{equation}\label{e:DPR}
T_n=\sum_{i=1}^n \1_{\{X_i>0\}}+\sum_{1\leq i<j\leq n}\1_{\{X_i+X_j>0\}}=:n
U_{n,1}+\frac{n(n-1)}{2}U_{n,2}\; .
\end{equation}
The Hermite rank of $\1_{\{\cdot>0\}}-\PP(X_1>0)$ equals 1, because
$\PE[X_1(\1_{\{X_1>0\}}-\PP(X_1>0))]>0$. We then deduce
from Theorem 6 of \cite{Arcones:1994} that
\begin{equation}\label{eq:U_1}
n^{D/2} L(n)^{-1/2}(U_{n,1}-\PP(X_1>0))=O_p(1)\; .
\end{equation}
The asymptotic properties of $U_{n,2}$ can be derived from those
of $U_n(0)$ where $U_n(\cdot)$ is the $U$-process defined in (\ref{def:U_n})
with $G(x,y)=x+y$.
Using the results obtained in the study of the Hodges-Lehmann
estimator, we obtain that $\alpha_{1,0}(r)=\alpha_{0,1}(r)=-\varphi(r/\sqrt{2})/\sqrt{2}$,
which is different from 0 for all $r$ in $\rset$ since $\varphi$
is the p.d.f of a standard Gaussian random variable. Thus, the Hermite rank
of the class of functions $\{\1_{G(\cdot,\cdot)\leq r}-\alpha_{0,0}(r); r\in\rset\}$
is equal to 1.
Using the same arguments as those used in the previous example, the assumptions of
Theorem \ref{theo:D<1/2} are fulfilled with $m=1$. Since
$2\alpha_{1,0}(0)=-2\varphi(0)/\sqrt{2}=-1/\sqrt{\pi}$, we get
\begin{equation}\label{eq:U_2}
n^{D/2} L(n)^{-1/2}\left(U_{n,2}-U_2(0)\right)\stackrel{d}{\longrightarrow}
\frac{k(D)^{-1/2}}{\sqrt{\pi}} Z_{1,D}(1)\; ,
\end{equation}
where $U_2(0)=\int\int\1_{\{x+y>0\}}\varphi(x)\varphi(y) \rmd x
\rmd y=1/2$ and $k(D)$ is the constant given in (\ref{e:kD}).
From (\ref{e:DPR}), (\ref{eq:U_1}) and (\ref{eq:U_2}), we get
\begin{multline*}
\frac{2n^{D/2}}{n(n-1)L(n)^{1/2}}(T_n-n \PP(X_1>0)-n(n-1)U_2(0)/2)\\
=\frac{n^{D/2}}{L(n)^{1/2}}\left(U_{n,2}-U_2(0)\right)+o_p(1)
\stackrel{d}{\longrightarrow}\frac{k(D)^{-1/2}}{\sqrt{\pi}} Z_{1,D}(1)\; ,
\end{multline*}
which can be rewritten as follows
\begin{equation}\label{e:limitW}
n^{D/2} L(n)^{-1/2}\left(\frac{2}{n(n-1)}T_n-\frac{1}{n-1}-1/2\right)
\stackrel{d}{\longrightarrow}\frac{k(D)^{-1/2}}{\sqrt{\pi}} Z_{1,D}(1)\; .
\end{equation}
We have thus proved
\begin{prop}
In the long-memory case with $0<D<1$, the asymptotic behavior of the
Wilcoxon-signed rank statistic $T_n$ is Gaussian and given by
(\ref{e:limitW}).
\end{prop}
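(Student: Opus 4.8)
The plan is to exploit the splitting (\ref{e:DPR}) of $T_n$ into a linear $U$-statistic $nU_{n,1}$ and a genuine quadratic $U$-process term $\tfrac{n(n-1)}{2}U_{n,2}$, and to treat the two pieces at the common normalization $n^{D/2}L(n)^{-1/2}$. Dividing (\ref{e:DPR}) by $n(n-1)/2$ and subtracting the centering constants, using $\PP(X_1>0)=1/2$ and $U_2(0)=1/2$, I would rewrite
$$
\frac{2}{n(n-1)}T_n-\frac{1}{n-1}-\frac12=\frac{2}{n-1}\bigl(U_{n,1}-\PP(X_1>0)\bigr)+\bigl(U_{n,2}-U_2(0)\bigr)\; ,
$$
so that the whole problem reduces to identifying the limit of the second summand and showing that the first is negligible after rescaling.

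First I would dispose of the linear term. Since $\1_{\{\cdot>0\}}-\PP(X_1>0)$ has Hermite rank $1$, as $\PE[X_1(\1_{\{X_1>0\}}-\PP(X_1>0))]>0$, Theorem 6 of \cite{Arcones:1994} gives the bound (\ref{eq:U_1}), namely $n^{D/2}L(n)^{-1/2}(U_{n,1}-\PP(X_1>0))=O_p(1)$. Multiplying by the extra factor $2/(n-1)$ then forces $n^{D/2}L(n)^{-1/2}\cdot\tfrac{2}{n-1}(U_{n,1}-\PP(X_1>0))=o_p(1)$, so this term does not contribute to the limit.

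The main work is the quadratic term, which I would handle by applying Theorem \ref{theo:D<1/2} to the $U$-process $U_n(\cdot)$ associated with the symmetric kernel $G(x,y)=x+y$, evaluated at $r=0$, and transferring the conclusion to $U_{n,2}$ through the almost sure complementarity $U_{n,2}=1-U_n(0)$ (which also explains the sign, and would be immaterial anyway since $Z_{1,D}(1)$ is symmetric). Here the computation runs parallel to the Hodges--Lehmann example: the coefficients $\alpha_{1,0}(r)=\alpha_{0,1}(r)=-\varphi(r/\sqrt2)/\sqrt2$ are nonzero for every $r$, so the class $\{\1_{\{G\leq r\}}-U(r)\}$ has Hermite rank $m=1$. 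I would then verify the three hypotheses of Theorem \ref{theo:D<1/2}: the Lipschitz bound (\ref{eq:lip_h}) on $\PE|h(X_1,X_{1+k},s)-h(X_1,X_{1+k},t)|$, the Lipschitz continuity of $U$, and the Lipschitz continuity of $\widetilde{\Lambda}$ in (\ref{eq:lambda_tilde}). These go exactly as in the Hodges--Lehmann verification, the only change being the harmless replacement of the slope $\sqrt2$ by $1/\sqrt2$ coming from $G(x,y)=x+y$ rather than $(x+y)/2$. Evaluating at $r=0$ with $2\alpha_{1,0}(0)=-1/\sqrt\pi$ then yields (\ref{eq:U_2}), that is $n^{D/2}L(n)^{-1/2}(U_{n,2}-U_2(0))\cd\pi^{-1/2}k(D)^{-1/2}Z_{1,D}(1)$.

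Finally I would assemble the two estimates through Slutsky's theorem: the second summand converges in distribution to $\pi^{-1/2}k(D)^{-1/2}Z_{1,D}(1)$ while the first is $o_p(1)$, which gives (\ref{e:limitW}). I expect the genuinely delicate point to be the verification of the hypotheses of Theorem \ref{theo:D<1/2}, in particular the control of $\widetilde{\Lambda}$; but since $G(x,y)=x+y$ is only a rescaling of the kernel already analyzed for the Hodges--Lehmann estimator, these verifications reduce to the bounds established there. The new ingredient specific to $T_n$ is really only the Arcones estimate (\ref{eq:U_1}), which guarantees that the linear correction $U_{n,1}$ is asymptotically negligible at the scale $n^{D/2}L(n)^{-1/2}$.
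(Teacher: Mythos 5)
Your proposal is correct and follows essentially the same route as the paper: the decomposition (\ref{e:DPR}), the Arcones bound (\ref{eq:U_1}) to kill the linear term after rescaling by $2/(n-1)$, Theorem \ref{theo:D<1/2} with $m=1$ applied to the kernel $G(x,y)=x+y$ at $r=0$ (verified exactly as in the Hodges--Lehmann case) to get (\ref{eq:U_2}), and Slutsky to conclude (\ref{e:limitW}). If anything, you are more explicit than the paper on one point it leaves implicit, namely the complementarity $U_{n,2}=1-U_n(0)$ and the resulting sign flip in passing from the limit of $U_n(0)-U(0)$ to that of $U_{n,2}-U_2(0)$.
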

The Wilcoxon one-sample statistic $U_{n,2}$ was also studied
by \cite{hsing:wu:2004} pp. 1617-1618 by using a different approach.
We obtain the additional constant
$k(D)^{-1/2}$ in the limiting distribution compared to their
result.
\subsection{Sample correlation integral}
In the past few years, a lot of attention has been paid to the
estimation of the correlation dimension of a strange attractor. In many examples,
the correlation dimension $\alpha$ of an invariant probability measure
$\mu$ can be expressed through the \emph{correlation integral} $
C_{\mu}(r)=(\mu\times\mu)\{(x,y):\; |x-y|\leq r\}
$
through
$
C_{\mu}(r)\approx C r^{\alpha},
$
as $r$ tends to 0, where $C$ is a constant.
For further details on the correlation dimension and its applications,
see \cite{borovkova:burton:dehling:2001}.
\cite{grassberger:procaccia:1983} proposed an estimator of the
correlation dimension based on the sample correlation integral
$U_n(r)$, defined in (\ref{def:U_n}) with $G(x,y)=|x-y|$.
In this case,
$\alpha_{1,0}(r)=\alpha_{0,1}(r)=\int_{\rset}x\1_{\{|x-y|\leq r\}}
\varphi(x)\varphi(y)\rmd x\rmd y\\=\int_{\rset} x[\Phi(x+r)-\Phi(x-r)]\varphi(x)\rmd x$,
where, as before, $\varphi$ and $\Phi$ are the p.d.f. and the c.d.f. of a standard
Gaussian random variable, respectively.
Using the symmetry of a standard Gaussian random variable, one gets
$\alpha_{1,0}(r)=\alpha_{0,1}(r)=0$.
Lengthy but straightforward computations lead to
\begin{equation}\label{eq:coeff_corr}
\alpha_{2,0}(r)=\alpha_{0,2}(r)=-\alpha_{1,1}(r)=\dot{\varphi}(r/\sqrt{2})\; ,
\end{equation}
where $\dot{\varphi}$ denotes the first derivative of $\varphi$.
It is non-null if $r\neq 0$.
Thus, for any compact interval $I$ which
does not contain 0, the Hermite rank
of the class of functions $\{\1_{G(\cdot,\cdot)\leq r}-\alpha_{0,0}(r), r\in I\}$
is equal to 2.
Let us assume that $(X_i)_{i\geq 1}$ satisfy Assumption
(A\ref{assum:long-range}). In the case where $D>1/2$, let us check
the assumptions of Theorem \ref{theo:U_n_D>1/2}.
%
%
Conditions (\ref{cond:h_suppl}) and (\ref{cond:contr_G}) can be easily checked and
Condition (\ref{eq:G_lip}) is fulfilled by using similar arguments
as those used in the example of the Hodges-Lehmann estimator.
Conditions (\ref{cond:h_1}) and (\ref{assum:h_1_Lipschitz}) are satisfied since
\begin{equation}\label{eq:h_1:grassberger}
h_1(x,r)=\int_{\rset}\1_{\{|x-y|\leq r\}}\varphi(y)\rmd y=\Phi(x+r)-\Phi(x-r)\; ,
\end{equation}
where $\Phi$ is the
c.d.f of a standard Gaussian random variable.
Thus, in the case where $D>1/2$,
$$\{\sqrt{n}(U_n(r)-U(r)), r\in I\}$$
converges weakly in $\mathcal{D}(I)$, equipped with the
topology of uniform convergence, to the zero mean Gaussian process $\{W(r),r\in I\}$ with
covariance structure given by
\begin{multline}\label{e:limit-covC}
\PE[W(s)W(t)]=4\;\Cov(h_1(X_1,s),h_1(X_1,t))\\
+4\sum_{\ell\geq1}\{\Cov(h_1(X_1,s),h_1(X_{\ell+1},t))+\Cov(h_1(X_1,t),h_1(X_{\ell+1},s))\}\; ,
\end{multline}
where $h_1$ is given in (\ref{eq:h_1:grassberger}).
If $D<1/2$,
with similar arguments as those used in the example on the
Hodges-Lehmann estimator,
the assumptions of Theorem \ref{theo:D<1/2} are satisfied with $m=2$ and we get
using (\ref{eq:coeff_corr}), that
$$
\left\{k(D) n^{D} L(n)^{-1}\left(U_n(r)-U(r)\right); r\in I\right\}
$$
converges weakly in $\mathcal{D}(I)$, equipped with the
topology of uniform convergence, to
\begin{equation}\label{e:limit2-C}
\{\dot{\varphi}(r/\sqrt{2})(Z_{2,D}(1)-Z_{1,D}(1)^2); r\in I\}\; .
\end{equation}
where $I$ is any compact set of $\rset$ which does not contain 0.
Thus
\begin{prop}
In the long-memory case with $1/2<D<1$, the asymptotic behavior of the
sample correlation integral $U_n(r)$, $r\in I$ is Gaussian with
covariance (\ref{e:limit-covC}). If $0<D<1/2$ and if $I$ is a compact
set in $\mathbb{R}$ which does not contain 0, then the limit is
non-Gaussian and given in (\ref{e:limit2-C}).
\end{prop}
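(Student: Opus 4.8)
The plan is to recognize the proposition as the specialization of the two main theorems to the kernel $G(x,y)=|x-y|$, for which $h(x,y,r)=\1_{\{|x-y|\leq r\}}$ and, after integrating out $y$, $h_1(x,r)=\Phi(x+r)-\Phi(x-r)$ as in (\ref{eq:h_1:grassberger}). The first task is to pin down the Hermite rank of the class $\{h(\cdot,\cdot,r)-U(r),\ r\in I\}$. Using $\alpha_{p,q}(r)=\PE[h(X,Y,r)H_p(X)H_q(Y)]$ from (\ref{e:alpha}), the invariance of $\{|x-y|\leq r\}$ under $(x,y)\mapsto(-x,-y)$ against the odd factor $H_1$ forces $\alpha_{1,0}(r)=\alpha_{0,1}(r)=0$. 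A direct degree-two computation (reducing the Gaussian integrals via $x\varphi(x)=-\dot\varphi(x)$) then yields the coefficients (\ref{eq:coeff_corr}), namely $\alpha_{2,0}(r)=\alpha_{0,2}(r)=-\alpha_{1,1}(r)=\dot\varphi(r/\sqrt2)$, which is nonzero precisely when $r\neq 0$. Hence the class rank is $m=2$; the exclusion of $0$ in the second regime is exactly the requirement that $\dot\varphi(r/\sqrt2)$ stay away from $0$, so that the rank-two term genuinely drives the limit.

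The second task is to verify the regularity hypotheses, and the structural fact doing all the work is that $h$ depends on $(x,y)$ only through $x-y$, while $h_1$ is smooth and bounded with bounded derivatives $\partial_x h_1(x,r)=\varphi(x+r)-\varphi(x-r)$ and $\partial_r h_1(x,r)=\varphi(x+r)+\varphi(x-r)$. These derivative bounds give Conditions (\ref{cond:h_1}) and (\ref{assum:h_1_Lipschitz}) of Theorem~\ref{theo:U_n_D>1/2} at once. For the conditions on $h$ itself, that is (\ref{cond:h_suppl}), (\ref{cond:contr_G}), (\ref{eq:G_lip}) for Theorem~\ref{theo:U_n_D>1/2} and (\ref{eq:lip_h}) for Theorem~\ref{theo:D<1/2}, I would reduce each left-hand side to the probability that the univariate Gaussian $W_k=X_1-X_{1+k}$ (or a shifted copy $W_k+u-v$) either lands in a strip of width $O(|t-s|)$ around $\{|\,\cdot\,|=r\}$ or is pushed across that boundary by a shift of size $O(|u|+|v|)$. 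The required bounds then follow from a density estimate uniform in $k$: since $\Var(W_k)=2(1-\rho(k))\geq 2(1-\rho_\star)>0$ with $\rho_\star=\sup_{k\geq1}\rho(k)<1$, the density of $W_k$ is bounded by a constant independent of $k$, which is exactly what the Lipschitz-in-$r$ and Lipschitz-in-shift estimates need.

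For the remaining hypotheses of Theorem~\ref{theo:D<1/2}, Lipschitz continuity of $U$ follows from $U'(r)=\int[\varphi(x+r)+\varphi(x-r)]\varphi(x)\rmd x=\sqrt2\,\varphi(r/\sqrt2)$ being bounded, and Lipschitz continuity of $\widetilde\Lambda$ in (\ref{eq:lambda_tilde}) follows by writing $\widetilde\Lambda(s)-\widetilde\Lambda(t)$ as the integral of $(|x|+|xy|+|x^2-1|)\varphi(x)\varphi(y)$ over the region where $\1_{\{|x-y|\leq s\}}$ and $\1_{\{|x-y|\leq t\}}$ disagree, a strip of width $O(|t-s|)$ on which the integrand is integrable against the Gaussian weights. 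With the hypotheses checked, the proposition is immediate. For $1/2<D<1$, Theorem~\ref{theo:U_n_D>1/2} with $m=2$ gives weak convergence of $\{\sqrt n(U_n(r)-U(r))\}$ to the Gaussian process with covariance (\ref{e:limit-covC}). For $0<D<1/2$ with $0\notin I$, Theorem~\ref{theo:D<1/2} with $m=2$ gives the limit $k(D)^{-1}[\alpha_{1,1}(r)Z_{1,D}(1)^2+\alpha_{2,0}(r)Z_{2,D}(1)]$; substituting $\alpha_{1,1}(r)=-\dot\varphi(r/\sqrt2)$ and $\alpha_{2,0}(r)=\dot\varphi(r/\sqrt2)$ and rescaling by $k(D)$ produces exactly the limit (\ref{e:limit2-C}).

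I expect the main obstacle to be the two computational points isolated above: evaluating the degree-two Hermite coefficients (the lengthy but elementary computation producing $\dot\varphi(r/\sqrt2)$, and in particular the cancellation $\alpha_{1,1}=-\alpha_{2,0}$ responsible for the combination $Z_{2,D}(1)-Z_{1,D}(1)^2$), and securing the density bound for $W_k=X_1-X_{1+k}$ uniformly in $k$, where non-degeneracy of the process, $\sup_{k\geq1}\rho(k)<1$, is essential and without which none of the Lipschitz estimates would be uniform over the summation in $k$.
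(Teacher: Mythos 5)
Your proposal is correct and follows essentially the same route as the paper: compute $\alpha_{1,0}=0$ by symmetry and the degree-two coefficients (\ref{eq:coeff_corr}) to get Hermite rank $m=2$ off $r=0$, verify the hypotheses of Theorems \ref{theo:U_n_D>1/2} and \ref{theo:D<1/2} by the same Gaussian-strip/Lipschitz arguments the paper invokes via the Hodges-Lehmann example, and then specialize the two limit theorems. Your only addition is to spell out the detail the paper leaves implicit — that for $G(x,y)=|x-y|$ the uniform density bound rests on $\sup_{k\geq 1}\rho(k)<1$ (for the difference $X_1-X_{1+k}$) rather than $\inf_{k}\rho(k)>-1$ (for the sum, as in Hodges-Lehmann) — which is a faithful filling-in, not a different method.
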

\subsection{Shamos scale estimator}
Assume that
$(Y_i)_{i\geq 1}$ satisfy $Y_i=\sigma X_i$ where
$(X_i)_{i\geq 1}$ satisfy Assumption (A\ref{assum:long-range}).
The results of the previous subsection can be used to derive the
properties of the estimator of the scale $\sigma$
proposed by \cite{shamos:1976} and \cite{bickel:lehmann:1979}.
From $Y_1,\dots,Y_n$, it is defined by
$$
\hat{\sigma}_{BL}=c\;\textrm{median}\{|Y_i-Y_j|; 1\leq i<j\leq n\}
=c\;\sigma\;\textrm{median}\{|X_i-X_j|; 1\leq i<j\leq n\}\; ,
$$
where $c=1/(\sqrt{2}\Phi^{-1}(1/4)\approx 1.0483$ and
$\Phi$ is the c.d.f of a standard Gaussian random variable to achieve consistency
for $\sigma$ in the case of Gaussian distribution. $\hat{\sigma}_{BL}$ involves the median
of the distance between observations. As is the case for the
standard deviation, if the $Y_i$'s are transformed into
$aY_i+b$, then $\hat{\sigma}_{BL}$ is multiplied by $|a|$.
Here $G(x,y)=|x-y|$, $U(r)=\int[\Phi(x+r)-\Phi(x-r)]\varphi(x)\rmd x$,
$U'(r)=2\int\varphi(x+r)\varphi(x) \rmd x$, $U^{-1}(1/2)=1/c$
and $U'(U^{-1}(1/2))=U'(1/c)=\sqrt{2}\varphi(1/(c\sqrt{2}))$.
By Corollary \ref{coro:quantile_1},  we obtain that for $D>1/2$,
\begin{equation}\label{e:shamos+demi}
\sqrt{n}(\hat{\sigma}_{BL}-\sigma)\stackrel{d}{\longrightarrow}
-\frac{c\sigma W(1/c)}{\sqrt{2}\varphi(1/(c\sqrt{2}))}\; ,
\end{equation}
where $W$ is a Gaussian process having the covariance structure
(\ref{eq:cov_struct}) with $h_1$ given in (\ref{eq:h_1:grassberger}).
Consider now the case $D<1/2$.
By (\ref{eq:coeff_corr}),
$\alpha_{2,0}(U^{-1}(1/2))=-\alpha_{1,1}(U^{-1}(1/2))=-\alpha_{1,1}(1/c)
=\dot{\varphi}(1/(c\sqrt{2}))$.
Hence, we deduce from Corollary \ref{coro:quantile_2} that, if $D<1/2$,
\begin{multline}\label{e:Sh}
k(D) n^{D} L(n)^{-1}(\hat{\sigma}_{BL}-\sigma)\stackrel{d}{\longrightarrow}
\frac{c\sigma\dot{\varphi}(1/(c\sqrt{2}))}{\sqrt{2}\varphi(1/(c\sqrt{2}))}(Z_{1,D}(1)^2-Z_{2,D}(1))\\
=\frac{\sigma}{2}(Z_{2,D}(1)-Z_{1,D}(1)^2)\; .
\end{multline}
Let us now compare the asymptotic behavior of the Shamos scale
estimator with that of the square root of the sample variance
estimator,
$\hat{\sigma}_{n,Y}=(\sum_{i=1}^n (Y_i-\bar{Y})^2/(n-1))^{1/2}.$
We have
$$
n(n-1)(\hat{\sigma}^2_{n,Y}-\sigma^2)=\sigma^2[n\sum_{i=1}^n
(X_i^2-1)-\sum_{1\leq i,j\leq n} X_i X_j+n]\; ,
$$
so that by Lemma \ref{lem:cumulants},
\begin{equation*}
k(D) n^{D}
L(n)^{-1}(\hat{\sigma}^2_{n,Y}-\sigma^2)\stackrel{d}{\longrightarrow}
\sigma^2 (Z_{2,D}(1)-Z_{1,D}(1)^2)\; .
\end{equation*}
We apply the Delta method to go from $\sigma^2$ to $\sigma$, setting
$f(x)=\sqrt{x}$, so that $f'(\sigma^2)=1/(2\sqrt{\sigma^2})=1/(2\sigma)$.
We obtain
\begin{equation}\label{e:SD}
k(D) n^{D}
L(n)^{-1}(\hat{\sigma}_{n,Y}-\sigma)\stackrel{d}{\longrightarrow}
\frac{\sigma}{2}(Z_{2,D}(1)-Z_{1,D}(1)^2)\; .
\end{equation}
Thus,
\begin{prop}\label{prop:shamos_scale}
In the long-memory case with $1/2<D<1$, the asymptotic behavior of the
Shamos scale estimator $\hat{\sigma}_{BL}$ is Gaussian and given in
(\ref{e:shamos+demi}).
If $0<D<1/2$, it is non-Gaussian and given
by (\ref{e:Sh}); in this case, $\hat{\sigma}_{BL}$  converges to
$\sigma$ at the same rate as the
square root of the sample variance estimator with no loss
of efficiency.
\end{prop}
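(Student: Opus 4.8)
The plan is to reduce everything to the empirical quantile $U_n^{-1}(1/2)$ of the sample correlation integral (the $U$-process with $G(x,y)=|x-y|$) and then transport its limit laws through the quantile map using Corollaries \ref{coro:quantile_1} and \ref{coro:quantile_2}. The starting point is the identity $\textrm{median}\{|X_i-X_j|;\,1\leq i<j\leq n\}=U_n^{-1}(1/2)$, so that
$$
\hat{\sigma}_{BL}-\sigma=c\sigma\bigl(U_n^{-1}(1/2)-U^{-1}(1/2)\bigr),
$$
since $U^{-1}(1/2)=1/c$ makes the centering $c\sigma U^{-1}(1/2)$ equal to $\sigma$. Thus $\hat{\sigma}_{BL}$ is, up to the affine factor $c\sigma$, the empirical quantile at level $p=1/2$, and its asymptotics follow from those of $U_n-U$.

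First I would record that all the structural hypotheses have already been checked in the sample-correlation-integral subsection: there $\alpha_{1,0}(r)=0$ and $\alpha_{2,0}(r)=-\alpha_{1,1}(r)=\dot{\varphi}(r/\sqrt{2})\neq 0$ for $r\neq 0$, so the class $\{h(\cdot,\cdot,r)-U(r)\}$ has Hermite rank $m=2$ on any compact $I$ avoiding $0$, and the Lipschitz-type conditions of Theorems \ref{theo:U_n_D>1/2} and \ref{theo:D<1/2} hold. It then only remains to verify the quantile-specific hypotheses of the corollaries at $p=1/2$: that $U$ is differentiable at $r=1/c$ with $U'(1/c)=\sqrt{2}\varphi(1/(c\sqrt{2}))\neq 0$, which follows by differentiating $U(r)=\int[\Phi(x+r)-\Phi(x-r)]\varphi(x)\,\rmd x$.

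With these in hand the two regimes are immediate. For $D>1/2$, Corollary \ref{coro:quantile_1} gives $\sqrt{n}(U_n^{-1}(1/2)-U^{-1}(1/2))\cd -W(1/c)/U'(1/c)$; multiplying by $c\sigma$ and substituting $U'(1/c)$ yields \eqref{e:shamos+demi}. For $D<1/2$, Corollary \ref{coro:quantile_2} with $m=2$ produces the limit $-k(D)^{-1}\{\alpha_{1,1}(1/c)Z_{1,D}(1)^2+\alpha_{2,0}(1/c)Z_{2,D}(1)\}/U'(1/c)$; after multiplying by $c\sigma k(D)$ and inserting the values $\alpha_{2,0}(1/c)=-\alpha_{1,1}(1/c)=\dot{\varphi}(1/(c\sqrt{2}))$ from \eqref{eq:coeff_corr}, I would collapse the prefactor using $\dot{\varphi}(x)=-x\varphi(x)$, which turns $c\dot{\varphi}(1/(c\sqrt{2}))/(\sqrt{2}\varphi(1/(c\sqrt{2})))$ into $-1/2$ and gives \eqref{e:Sh}.

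The last, and genuinely separate, task is the efficiency comparison for $D<1/2$. Here I would expand $n(n-1)(\hat{\sigma}^2_{n,Y}-\sigma^2)=\sigma^2\bigl[n\sum_{i}(X_i^2-1)-\sum_{i,j}X_iX_j+n\bigr]$ into its Hermite components and apply Lemma \ref{lem:cumulants} to obtain the joint limit, which after normalization by $k(D)n^D/L(n)$ is $\sigma^2(Z_{2,D}(1)-Z_{1,D}(1)^2)$; the Delta method with $f(x)=\sqrt{x}$, $f'(\sigma^2)=1/(2\sigma)$, then yields \eqref{e:SD}. Comparing \eqref{e:Sh} with \eqref{e:SD} exhibits the identical limit, hence no loss of efficiency. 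I expect the main obstacle to lie precisely in this comparison step rather than in the quantile asymptotics: it requires the joint (not merely marginal) convergence of the two degree-two statistics to $(Z_{1,D}(1)^2,Z_{2,D}(1))$ with the correct normalization, which is exactly what the cumulant computations of Lemma \ref{lem:cumulants} are designed to control, whereas the quantile part is a routine application of the Hadamard differentiability of $V\mapsto V^{-1}(1/2)$ already packaged into the corollaries.
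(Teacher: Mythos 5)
Your proposal is correct and follows essentially the same route as the paper: reduce $\hat{\sigma}_{BL}$ to the empirical quantile $U_n^{-1}(1/2)$ of the correlation-integral $U$-process, apply Corollaries \ref{coro:quantile_1} and \ref{coro:quantile_2} with the values $U'(1/c)=\sqrt{2}\varphi(1/(c\sqrt{2}))$ and $\alpha_{2,0}(1/c)=-\alpha_{1,1}(1/c)=\dot{\varphi}(1/(c\sqrt{2}))$, simplify via $\dot{\varphi}(x)=-x\varphi(x)$ to get \eqref{e:shamos+demi} and \eqref{e:Sh}, and then compare with the sample standard deviation through Lemma \ref{lem:cumulants} and the Delta method. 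Your observation that the efficiency comparison hinges on the joint convergence supplied by Lemma \ref{lem:cumulants} is exactly the point the paper relies on as well.
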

\section{Proofs of Theorems \ref{theo:U_n_D>1/2} and \ref{theo:D<1/2}}\label{sec:proofs}
\subsection{Lemmas used in the proof of Theorem~\ref{theo:U_n_D>1/2}}\label{subsec:theo1}
\begin{lemma}\label{lem:O_P(1)}
Under the assumptions of Theorem \ref{theo:U_n_D>1/2},
the process $\{\sqrt{n}W_n(r),r\in I\}$, where $W_n(\cdot)$ is defined in (\ref{eq:W_n_hoeff}),
converges weakly in the space of cadlag functions equipped with the
topology of uniform convergence to the zero mean Gaussian process $\{W(r),r\in I\}$ with
covariance structure given by
\begin{multline*}
\PE[W(s)W(t)]=4\; \Cov(h_1(X_1,s),h_1(X_1,t))\\
+4\sum_{\ell\geq1}\{\Cov(h_1(X_1,s),h_1(X_{\ell+1},t))+\Cov(h_1(X_1,t),h_1(X_{\ell+1},s))\}\; .
\end{multline*}
\end{lemma}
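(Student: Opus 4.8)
The plan is to establish the weak convergence by the standard route of proving convergence of the finite-dimensional distributions together with tightness, exploiting the fact that $\sqrt{n}W_n(r)=\frac{2}{\sqrt n}\sum_{i=1}^n\{h_1(X_i,r)-U(r)\}$ is a normalised partial sum of a bounded-variance functional of the Gaussian sequence $(X_i)$. The crucial structural observation is that, since the Hermite rank of the class $\{h(\cdot,\cdot,r)-U(r)\}$ is $m=2$, the function $h_1(\cdot,r)-U(r)$ has Hermite rank $\tau(r)\geq m=2$ for every $r$ by (\ref{eq:tau}), so its $H_1$-coefficient $\alpha_{1,0}(r)$ vanishes. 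Because $D>1/2$ and $\tau\geq 2$ we have $\tau D\geq 2D>1$, hence $\sum_{k}|\rho(k)|^{\tau}<\infty$; this is exactly the short-range regime for these functionals.

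For the finite-dimensional distributions I would use the Cram\'er--Wold device: for $r_1,\dots,r_d\in I$ and reals $c_1,\dots,c_d$, the linear combination $\sum_j c_j\sqrt n\,W_n(r_j)$ equals $\frac{2}{\sqrt n}\sum_{i=1}^n g(X_i)$ with $g=\sum_j c_j\{h_1(\cdot,r_j)-U(r_j)\}$, a centred function of a single Gaussian variable whose $H_1$-coefficient is $\sum_j c_j\alpha_{1,0}(r_j)=0$, so $g$ has Hermite rank at least $2$. Since $D>1/2=1/m\geq 1/\tau$, Theorem 4 of \cite{Arcones:1994} (the Breuer--Major central limit theorem) applies and yields a centred Gaussian limit with variance $\sum_{k\in\zset}\Cov(g(X_1),g(X_{1+k}))$. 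By bilinearity this identifies the limiting covariance of the process as $4\sum_{k\in\zset}\Cov(h_1(X_1,s),h_1(X_{1+k},t))$; splitting the sum into $k=0$, $k\geq1$ and $k\leq-1$ and using stationarity to rewrite the negative lags produces exactly (\ref{eq:cov_struct}).

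Tightness is the main technical point, and it is where the Lipschitz hypothesis (\ref{assum:h_1_Lipschitz}) enters. Writing $g_{s,t}=\{h_1(\cdot,t)-U(t)\}-\{h_1(\cdot,s)-U(s)\}$, which again has Hermite rank at least $2$, I would bound $\PE[(\sqrt n(W_n(t)-W_n(s)))^2]=4\sum_{|k|<n}(1-|k|/n)\Cov(g_{s,t}(X_1),g_{s,t}(X_{1+k}))$. Using the elementary inequality $|\Cov(g_{s,t}(X_1),g_{s,t}(X_{1+k}))|\leq|\rho(k)|^{2}\,\Var(g_{s,t}(X_1))$, valid for functionals of rank at least $2$ since $|\rho(k)|\leq1$, together with $\Var(g_{s,t}(X_1))\leq\PE[(h_1(X_1,t)-h_1(X_1,s))^2]\leq C^2|t-s|^2$ from (\ref{assum:h_1_Lipschitz}), gives $\PE[(\sqrt n(W_n(t)-W_n(s)))^2]\leq C'\,|t-s|^2$ with $C'$ independent of $n$, because $\sum_k\rho(k)^2<\infty$. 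Since (\ref{assum:h_1_Lipschitz}) and the induced Lipschitz continuity of $U$ also make the paths of $\sqrt n\,W_n$ continuous, this moment bound (exponent $2$ on $|t-s|$, hence a Kolmogorov--Chentsov exponent $\beta=1>0$) is precisely the Billingsley criterion for tightness in $C(I)$. Combining the finite-dimensional convergence with tightness yields weak convergence in $C(I)$, and a fortiori in $\mathcal{D}(I)$ equipped with the uniform topology, to the Gaussian process with covariance (\ref{eq:cov_struct}). The only genuine obstacle is the tightness bound, and the role of the two-variable, rank-$2$ structure is exactly to keep $\sum_k|\rho(k)|^{\tau}$ summable even when $D$ is close to $1/2$.
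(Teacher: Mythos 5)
Your proof is correct, but it takes a genuinely different route from the paper's. The paper proves this lemma in one stroke by invoking Theorem 9 of \cite{Arcones:1994}, a functional CLT for Gaussian-subordinated processes indexed by a class of functions: condition (i'') there is the covariance-summability hypothesis (satisfied because the class $\{h_1(\cdot,r),\, r\in I\}$ has rank $\tau\geq m=2>1/D$), and condition (ii) is a bracketing-entropy integral, which the paper verifies by constructing brackets $f_i=h_1(\cdot,r_i)$, $\Delta_i=h_1(\cdot,r_i)-h_1(\cdot,r_{i-1})$ on an $\varepsilon$-grid, using both the Lipschitz bound (\ref{assum:h_1_Lipschitz}) and the monotonicity of $h_1$ in its second argument, so that $N^{(2)}_{[\,]}(\varepsilon,\mathcal{F})=O(1/\varepsilon)$. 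You instead prove finite-dimensional convergence by Cram\'er--Wold together with the Breuer--Major-type CLT (Theorem 4 of \cite{Arcones:1994}), exploiting correctly that $m=2$ forces $\alpha_{1,0}(r)=0$ for every $r\in I$, so every linear combination has Hermite rank at least $2$ while $\sum_k|\rho(k)|^2<\infty$ since $2D>1$; and you handle tightness by hand, via the rank-$2$ covariance inequality $|\Cov(g(X_1),g(X_{1+k}))|\leq|\rho(k)|^{2}\Var(g(X_1))$ and (\ref{assum:h_1_Lipschitz}), which give $\PE[(\sqrt n\{W_n(t)-W_n(s)\})^2]\leq C|t-s|^2$ uniformly in $n$, i.e.\ Billingsley's moment criterion with exponent $2>1$ for processes with continuous paths. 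Both halves are sound, and the bilinear expansion of the Breuer--Major variance does produce exactly (\ref{eq:cov_struct}). What each approach buys: the paper's is shorter but leans on heavier bracketing machinery and on monotonicity of $h_1(x,\cdot)$; yours is more elementary and self-contained (only the scalar CLT is quoted), makes the origin of the covariance formula transparent, and uses only the Lipschitz hypothesis, never monotonicity. Incidentally, your moment bound is essentially the estimate the paper itself obtains from Arcones' inequality (2.43) in the proof of Lemma \ref{lem:reste1}, there used for a different purpose. The only points you leave implicit --- both trivial here --- are tightness of $\sqrt n W_n(r_0)$ at a single point (immediate from the finite-dimensional convergence) and the absolute convergence of the series in (\ref{eq:cov_struct}) (same $|\rho(k)|^2$ bound applied to the cross-covariances).
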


The proof of Lemma \ref{lem:O_P(1)} is in the supplemental article \cite{supplement}.

\begin{lemma}\label{lem:reste1}
Under the assumptions of Theorem \ref{theo:U_n_D>1/2},
$$
\sup_{r\in I}\sqrt{n} |R_n(r)|=o_P(1)\; ,
$$
where $R_n$ is defined in (\ref{eq:R_n_hoeff}).
\end{lemma}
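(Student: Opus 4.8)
The plan is to exploit the fact that, for each fixed $r$, $R_n(r)$ defined in (\ref{eq:R_n_hoeff}) is a \emph{canonical} (completely degenerate) $U$-statistic of order two, associated with the kernel
\[
\widetilde h(x,y,r)=h(x,y,r)-h_1(x,r)-h_1(y,r)+U(r),
\]
whose Hermite expansion, obtained from (\ref{eq:hermite_rank}) by removing the rank-one projections, contains only terms $\alpha_{p,q}(r)H_p(x)H_q(y)/(p!q!)$ with $p\geq 1$ \emph{and} $q\geq 1$. Since the Hermite rank is $m=2$, the lowest-order surviving term is $p=q=1$, so every summand carries a factor of the covariance $\rho(\cdot)$ of total degree at least two. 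Pointwise, for fixed $r$, Corollary 2 of \cite{Dehling:Taqqu:1989} already yields $R_n(r)=O_P(n^{-D}L(n))$, hence $\sqrt n\,R_n(r)=o_P(1)$ because $D>1/2$. The whole difficulty therefore lies in upgrading this pointwise statement to a \emph{uniform} one over the compact interval $I$, that is, in establishing the stochastic equicontinuity of the process $\{\sqrt n\,R_n(r),\,r\in I\}$.

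First I would establish the quantitative moment bounds collected in Lemma~\ref{eq:maj_Rn}. Using the Hermite expansion of $\widetilde h$, the orthogonality of the Hermite polynomials and the standard diagram formulas for products $\PE[H_p(X_i)H_q(X_j)H_{p'}(X_{i'})H_{q'}(X_{j'})]$, the second moment $\PE[R_n(r)^2]$ is controlled by sums of the type $n^{-2}\sum_{|k|<n}|\rho(k)|^{p+q}$ with $p+q\geq 2$; as $D>1/2$ gives $\sum_k\rho(k)^2<\infty$, these converge and yield $\PE[R_n(r)^2]\leq C\,n^{-2D}L(n)^2$ uniformly in $r\in I$. The same computation applied to the increment kernel $\widetilde h(\cdot,\cdot,s)-\widetilde h(\cdot,\cdot,t)$, combined with the Lipschitz-in-$r$ controls (\ref{cond:h_suppl}), (\ref{eq:G_lip}), (\ref{cond:h_1}) and (\ref{assum:h_1_Lipschitz}) and the boundedness of the indicator kernel, produces an increment bound of the form $\PE[(R_n(s)-R_n(t))^2]\leq C\,|s-t|\,n^{-2D}L(n)^2$. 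These conditions enter precisely to dominate the variation in $r$ of both the kernel and the coefficients $\alpha_{p,q}(r)$.

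Because the expansion of $\widetilde h$ is infinite, I would next truncate it at a fixed Hermite order $N$ and use Lemma~\ref{lem:controle_hermite} to show that the remaining high-order chaos is uniformly negligible, so that the supremum over $r\in I$ of the contribution of the terms $p+q>N$ is $o_P(n^{-1/2})$. For the truncated part, which lives in a finite sum of Wiener chaoses, the increment bounds of the preceding step feed into the extension of the maximal inequality of \cite{soulier:2001} stated in Lemma~\ref{lem:ext:soulier}; together with a chaining argument over a discretization of $I$ this converts the pointwise and increment estimates into a bound on $\PE[\sup_{r\in I}\sqrt n\,|R_n(r)|]$ that tends to $0$. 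Combining the truncation with the maximal inequality then gives $\sup_{r\in I}\sqrt n\,|R_n(r)|=o_P(1)$.

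The main obstacle is exactly this uniformization. In the long-range dependent setting one cannot invoke the classical empirical-process machinery based on independence or absolute regularity, and the variance of $R_n$ no longer behaves as in the i.i.d.\ case; the essential device is the Gaussian-subordinated maximal inequality for $U$-processes furnished by Lemma~\ref{lem:ext:soulier}, our extension of \cite{soulier:2001} to kernels of two variables. Making the three ingredients cooperate---the rank-two structure ($p,q\geq 1$) that guarantees summability when $D>1/2$, the Lipschitz regularity that controls the $r$-increments, and the maximal inequality that passes from moment bounds to the supremum---is where the real work resides.
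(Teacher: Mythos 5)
Your skeleton overlaps with the paper's in two places: you correctly note that the degenerate kernel $h-h_1\otimes 1-1\otimes h_1+U$ has a Hermite expansion with only $p\geq 1$ \emph{and} $q\geq 1$ terms, and your target increment bound $\PE[(R_n(t)-R_n(s))^2]\lesssim |t-s|\,n^{-1-\alpha}$ is exactly Lemma~\ref{eq:maj_Rn}. But the mechanism you propose for passing from these moment bounds to $\sup_{r\in I}\sqrt{n}|R_n(r)|=o_P(1)$ rests on two mischaracterizations of the paper's lemmas, and this is where the proposal genuinely breaks down. First, Lemma~\ref{lem:ext:soulier} is \emph{not} a maximal inequality: it is a covariance inequality bounding $|\PE[f_1(Y_1)f_2(Y_2)]|$ for functions of correlated Gaussian vectors with a Hermite-rank gain, and in the paper it is used \emph{inside} the proof of Lemma~\ref{eq:maj_Rn} to control cross terms such as $\PE[H_p(X_{i_1})J(X_{i_1},X_{i_4})H_q(X_{i_2})]$ in (\ref{eq:soulier}). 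It cannot convert pointwise or increment estimates into a bound on a supremum, so the device you designate as carrying the uniformization does not do what you claim. Second, Lemma~\ref{lem:controle_hermite} requires the existence and square-integrability of $\partial^6 f/\partial x^3\partial y^3$; the kernel $J$ is built from indicators and is not differentiable, so the lemma cannot be applied to $J$ or to the tails of its chaos expansion. This is precisely why the paper convolves $J$ with a Gaussian kernel to form $J_\varepsilon$ in (\ref{eq:defJ_vareps}), applies Lemma~\ref{lem:controle_hermite} to $J_\varepsilon$ (producing the factor $\varepsilon^{-6}$ in (\ref{eq:regul})), controls $J-J_\varepsilon$ via conditions (\ref{cond:contr_G}) and (\ref{cond:h_1}) (producing $\varepsilon^{1/2}$ in (\ref{eq:regul_diff})), and balances the two with $\varepsilon=n^{-\nu}$. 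Your truncation-at-order-$N$ step has no valid tool behind it.

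The paper's actual route to uniformity is different and you miss its key structural ingredient: monotonicity in $r$. Since $h$, $h_1$, $U$ (hence $U_n$ and $U$) are nondecreasing in $r$, one gets the pathwise domination (\ref{eq:decomp_Rn}), which bounds $\sqrt{n}|R_n(t)-R_n(s)|$ for $s\leq t\leq s+\delta$ by grid values of $R_n$ plus increments of $W_n$ and of $U$; this is the hypothesis of Lemma 5.2 of \cite{borovkova:burton:dehling:2001}, whose condition (i) is supplied by Lemma~\ref{eq:maj_Rn} and whose condition (ii) requires the \emph{separate} bound $\PE[|\sqrt{n}(W_n(t)-W_n(s))|^2]\leq C|t-s|^2$, obtained from inequality (2.43) of Theorem 4 in \cite{Arcones:1994} applied to $h_1(\cdot,t)-h_1(\cdot,s)-(U(t)-U(s))$, whose Hermite rank $\tau\geq 2>1/D$. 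Your proposal never controls $W_n$-increments at all, and your bare $L^2$ bound of order $|t-s|\,n^{-1-\alpha}$ is the borderline exponent in $|t-s|$ for which the classical Kolmogorov--Chentsov/chaining criterion fails without extra structure (hypercontractivity on a fixed chaos, which you would have to develop after a truncation you cannot justify, or the monotonicity the paper exploits). So while the diagnosis of where the difficulty lies is right, the proof as proposed has a genuine gap in its central uniformization step.
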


The proof of Lemma \ref{lem:reste1} is in the supplemental article \cite{supplement}.

\begin{lemma}\label{eq:maj_Rn}
Under the assumptions of Theorem \ref{theo:U_n_D>1/2}, there exist
positive constants $C$ and $\alpha$ such that, for large
enough $n$,
\begin{equation}\label{e:boundR}
\PE[\{R_n(t)-R_n(s)\}^2]\leq C\; \frac{|t-s|}{n^{1+\alpha}}\; ,
\textrm{ for all } s,t\in I\; ,
\end{equation}
where $R_n$ is defined in (\ref{eq:R_n_hoeff}).
\end{lemma}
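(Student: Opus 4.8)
The plan is to work throughout with the completely degenerate kernel coming from the Hoeffding decomposition. Writing $\psi(x,y,r)=h(x,y,r)-h_1(x,r)-h_1(y,r)+U(r)$, formula (\ref{eq:R_n_hoeff}) reads $R_n(r)=\frac{1}{n(n-1)}\sum_{i\neq j}\psi(X_i,X_j,r)$, and subtracting the $q=0$ and $p=0$ slices (\ref{eq:hermite_decomp_h_1}) from the bivariate expansion (\ref{eq:hermite_rank}) shows that the Hermite expansion of $\psi$ keeps only the terms with both indices at least one,
\[
\psi(x,y,r)=\sum_{p,q\geq 1}\frac{\alpha_{p,q}(r)}{p!\,q!}H_p(x)H_q(y)\; .
\]
Set $\beta_{p,q}=\alpha_{p,q}(t)-\alpha_{p,q}(s)$ and $\Delta\psi=\psi(\cdot,\cdot,t)-\psi(\cdot,\cdot,s)$. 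First I would record the $L^2$ estimate that produces the factor $|t-s|$: since $h(x,y,t)-h(x,y,s)=\1_{\{s<G(x,y)\leq t\}}$ is itself an indicator, Condition (\ref{cond:h_suppl}) with $u=v=0$ gives $\PE[(h(X,Y,t)-h(X,Y,s))^2]=U(t)-U(s)\leq C|t-s|$ for $(X,Y)$ standard Gaussian, while (\ref{assum:h_1_Lipschitz}) controls the $h_1$ increments; combining these and using that $I$ is bounded yields
\[
\sum_{p,q\geq 1}\frac{\beta_{p,q}^2}{p!\,q!}=\PE[\Delta\psi(X,Y)^2]\leq C|t-s|\; .
\]

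Next I would expand the second moment by the diagram (Mehler) formula. With $\rho_{ab}=\rho(|a-b|)$ and $a,\dots,f\geq 0$ denoting the numbers of contractions between the six pairs among $\{i,j,k,l\}$,
\[
\PE[\Delta\psi(X_i,X_j)\Delta\psi(X_k,X_l)]=\sum_{a,\dots,f\geq 0}\frac{\beta_{a+b+c,\,a+d+e}\,\beta_{b+d+f,\,c+e+f}}{a!\,b!\,c!\,d!\,e!\,f!}\,\rho_{ij}^{a}\rho_{ik}^{b}\rho_{il}^{c}\rho_{jk}^{d}\rho_{jl}^{e}\rho_{kl}^{f}\; ,
\]
the degeneracy $p,q,p',q'\geq 1$ (with $p=a+b+c$, etc.) restricting the admissible patterns. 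Taking absolute values and summing over $i\neq j$ and $k\neq l$, the problem reduces to estimating, for each contraction pattern, the correlation sum $S(a,\dots,f)=\sum_{i\neq j,\,k\neq l}|\rho_{ij}|^{a}\cdots|\rho_{kl}|^{f}$.

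The analytic heart is the bound on $S$ under $D>1/2$. I would use two facts: $\sum_{1\leq|d|<n}|\rho(d)|=O(n^{1-D}L(n))$, and, since $2D>1$ and $\theta:=\sup_{d\geq1}|\rho(d)|<1$ (the process being non-degenerate), the geometric damping $\sum_{d}|\rho(d)|^{m}\leq C\,\theta^{m}$ for $m\geq 2$. Bounding each pattern by a spanning tree of its contraction graph (one free vertex gives a factor $n$, each tree edge of multiplicity $m$ a factor $\sum_d|\rho(d)|^m$) shows every \emph{connected} pattern contributes at most $O(n^{4-3D}L(n)^3)$, whereas the \emph{disconnected} patterns, in which each vertex is matched to a single partner, contribute at most $O(n^{4-2D}L(n)^2)$; the latter, arising from the $H_1\otimes H_1$ component, dominates since $4-2D>4-3D$. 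Summing the combinatorial coefficients is where the geometric damping is decisive: patterns with a multiplicity $\geq 2$ are weighted by $\theta^{m}$, which together with the $L^2$ bound on $(\beta_{p,q})$ and Cauchy--Schwarz makes the series over $(a,\dots,f)$ converge while carrying exactly one factor $\sum\beta_{p,q}^2/(p!\,q!)\leq C|t-s|$. This gives
\[
\sum_{i\neq j,\,k\neq l}\bigl|\PE[\Delta\psi(X_i,X_j)\Delta\psi(X_k,X_l)]\bigr|\leq C|t-s|\,n^{4-2D}L(n)^2\; ,
\]
and dividing by $n^2(n-1)^2$ yields $\PE[\{R_n(t)-R_n(s)\}^2]\leq C|t-s|\,n^{-2D}L(n)^2$. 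Since $2D>1$, for any $\alpha<2D-1$ the slowly varying factor obeys $n^{-(2D-1-\alpha)}L(n)^2\to0$, so the asserted bound $C|t-s|/n^{1+\alpha}$ holds for $n$ large, uniformly in $s,t\in I$.

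The step I expect to be the main obstacle is precisely the uniform control of the combinatorial series over all contraction patterns: one must check that the factorials $1/(a!\cdots f!)$ and the proliferation of diagrams at high Hermite order are compensated by the geometric smallness of $\sum_d|\rho(d)|^m$ for $m\geq2$ and by the summability $\sum\beta_{p,q}^2/(p!\,q!)<\infty$, so that neither the single factor $|t-s|$ is lost nor a spurious power of $n$ gained. This is exactly the type of estimate supplied by the extension of \cite{soulier:2001} in Lemma \ref{lem:ext:soulier}, which I would invoke to package the correlation-sum bookkeeping.
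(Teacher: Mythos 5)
Your overall architecture for the four-distinct-indices case is essentially the paper's own (the paper also expands the degenerate kernel in Hermite polynomials, applies Taqqu's diagram bounds for $\PE[H_{p_1}H_{p_2}H_{p_3}H_{p_4}]$, and finds that the dominant contribution carries two cross-correlations, giving $n^{4-2D+2\varepsilon}|t-s|$). But the proposal has two genuine gaps. First, and most seriously, you never treat the tuples where $\{i,j\}$ and $\{k,l\}$ share an index. Your sum over $i\neq j$, $k\neq l$ contains of order $n^3$ such triples, and for them the ``geometric damping'' collapses: the coinciding pair contributes an edge with correlation exactly $1$, so the absolute-value diagram series cannot be controlled by powers of $\theta$, and the crude Cauchy--Schwarz bound $|\PE[\Delta\psi(X_i,X_j)\Delta\psi(X_i,X_l)]|\leq C|t-s|$ only yields $O(n^3|t-s|)$, i.e.\ $|t-s|/n$ after normalization --- short of the required $|t-s|/n^{1+\alpha}$. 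This is precisely where the paper spends most of its effort: it smooths $J$ by convolution ($J_\varepsilon$), uses Lemma \ref{lem:controle_hermite} to get decay $\sqrt{(p-3)!(q-3)!}$ of the Hermite coefficients of $J_\varepsilon$, applies Lemma \ref{lem:ext:soulier} to extract one factor of cross-correlation, and then optimizes $\varepsilon=n^{-\nu}$ against the approximation error $J-J_\varepsilon$. Without some substitute for this argument your proof does not reach the stated rate.

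Second, the convergence of your term-by-term absolute-value bookkeeping needs the correlations to be small, not merely bounded away from $1$. The multinomial/hypercontractive factors are of size $3^{p_1+\cdots+p_4}$ (this is (\ref{eq:taqqu_bound})), so the series over contraction patterns converges only under a condition of the type $3\theta<1$; under Assumption (A\ref{assum:long-range}) nothing prevents $\rho(1)$ from being close to $1$. The paper handles this by first assuming $\rho^\star<1/13$ (so that the transformed correlations $\bar{\rho}^\star=4\rho^\star/(1-(\rho^\star)^2)$ of Lemma \ref{l:bar-gamma} are below $1/3$) and then removing that assumption by splitting off the finitely many lags $k\leq\tau$ and bounding those terms directly; your proposal contains no such truncation device. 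Note also that deferring the bookkeeping to Lemma \ref{lem:ext:soulier} cannot repair either gap: that lemma carries the same smallness condition $r^\star\leq 1/3-\varepsilon$, and for four distinct indices it produces only $(r^\star)^{[(\tau+1)/2]}$ with $\tau=2$, i.e.\ a single power of cross-correlation, which after summation gives $O(n^{4-D}L(n))$, i.e.\ $n^{-D}L(n)$ after dividing by $n^2(n-1)^2$ --- not $O(n^{-1-\alpha})$ since $D<1$. Your own disconnected-pattern analysis (two correlation factors, $n^{4-2D}L(n)^2$) is the right count, so the correct packaging tool is the paper's Lemma \ref{l:bar-gamma} combined with Taqqu's Lemma 3.2, not Lemma \ref{lem:ext:soulier}.
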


The proof of Lemma \ref{eq:maj_Rn} can be found at the end of this
subsection and is based on the following lemmas
\ref{lem:controle_hermite}, \ref{lem:ext:soulier} and \ref{l:bar-gamma}.

\begin{lemma}\label{lem:controle_hermite}
Let $f:\rset^2\longrightarrow \rset$ be a bounded function such that its derivative
$\partial^6 f/\partial x^3\partial y^3$ exists. Let $(X,Y)$ be a
standard Gaussian random vector. Assume that
$\PE\left[ \left(\partial^{i+j} f(X,Y)/\partial x^i\partial y^j \right)^2 \right]<\infty$,
for all $1\leq i,j\leq 3$,
then the Hermite coefficients of $f$ defined by
$c_{p,q}(f):=\PE[f(X,Y) H_p(X) H_q(Y)]$ satisfy, for $p,q\geq 3$
\begin{equation}\label{eq:c_pq:bound}
|c_{p,q}(f)|\leq \PE[(\partial^6 f(X,Y)/\partial x^3\partial
y^3)^2]^{1/2}\sqrt{(p-3)!}\sqrt{(q-3)!}\; .
\end{equation}
\end{lemma}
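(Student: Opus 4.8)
The plan is to transfer, via Gaussian integration by parts, three derivatives in each variable from the Hermite polynomials onto $f$, thereby rewriting $c_{p,q}(f)$ as a Hermite coefficient of $\partial^6 f/\partial x^3\partial y^3$ of reduced order, and then to conclude with the Cauchy--Schwarz inequality. The basic tool is the Rodrigues formula for the monic Hermite polynomials, $H_p(x)\varphi(x)=(-1)^p\varphi^{(p)}(x)$, where $\varphi$ is the standard Gaussian density and $\varphi^{(p)}$ its $p$-th derivative. From it one obtains the one-dimensional identity that, for $p\geq 3$ and a sufficiently regular function $g$,
\[
\PE[g(X)H_p(X)]=\PE[g^{(3)}(X)H_{p-3}(X)]\; ,
\]
by writing the expectation as $(-1)^p\int g(x)\varphi^{(p)}(x)\,\rmd x$, integrating by parts three times, and reapplying the Rodrigues formula to $\varphi^{(p-3)}$; the six sign factors cancel.

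Next I would apply this identity in each variable separately. Using the independence of $X$ and $Y$ and conditioning first on $Y$, then on $X$, the identity applied in $x$ (with $g=f(\cdot,y)$) and then in $y$ gives the key reduction
\[
c_{p,q}(f)=\PE\!\left[\frac{\partial^6 f}{\partial x^3\partial y^3}(X,Y)\,H_{p-3}(X)\,H_{q-3}(Y)\right]\; , \qquad p,q\geq 3\; .
\]
Finally, the Cauchy--Schwarz inequality bounds the right-hand side by $\PE[(\partial^6 f/\partial x^3\partial y^3(X,Y))^2]^{1/2}$ times $\PE[H_{p-3}(X)^2H_{q-3}(Y)^2]^{1/2}$. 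Since $X$ and $Y$ are independent and $\PE[H_k(X)^2]=k!$ for the monic Hermite polynomials under the standard Gaussian law, the latter factor equals $\sqrt{(p-3)!}\,\sqrt{(q-3)!}$, which is exactly the asserted bound (\ref{eq:c_pq:bound}).

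The main obstacle is the analytic justification of the six integrations by parts, namely showing that all intermediate integrals converge and, above all, that every boundary term vanishes. Each boundary term is, up to the other variable, a product of a partial derivative of $f$ with some $\varphi^{(k)}=(-1)^kH_k\varphi$; because the Gaussian factor decays faster than any polynomial, such a term tends to $0$ at infinity as soon as the accompanying derivative of $f$ is suitably controlled. This is precisely where the boundedness of $f$ (for the lowest-order boundary terms) and the finiteness of the $L^2_\varphi$-norms of the mixed partial derivatives $\partial^{i+j}f/\partial x^i\partial y^j$, $1\leq i,j\leq 3$, enter; these same integrability assumptions also guarantee that the successive integrands, and the final expectation, are absolutely integrable so that Fubini and Cauchy--Schwarz may be applied. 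Carrying out this verification carefully, rather than the algebraic reduction itself, is the delicate part of the proof.
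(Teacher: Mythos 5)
Your proof is correct and follows essentially the same route as the paper: the paper likewise uses the Hermite--Gaussian derivative identity (in the form $(H_{n-1}\varphi)'=-H_n\varphi$, equivalent to your Rodrigues formula) to perform three integrations by parts in each variable, obtaining $c_{p,q}(f)=\PE\bigl[\partial^6 f/\partial x^3\partial y^3(X,Y)\,H_{p-3}(X)H_{q-3}(Y)\bigr]$, and then concludes by Cauchy--Schwarz with $\PE[H_k(X)^2]=k!$. Your additional discussion of the vanishing boundary terms is a careful elaboration of a point the paper leaves implicit, not a departure from its argument.
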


The proof of Lemma \ref{lem:controle_hermite} is in the supplemental article
\cite{supplement}.
The following lemma is an extension of Corollary 2.1 in
\cite{soulier:2001} and is proved in the supplemental article
\cite{supplement}.

\begin{lemma}\label{lem:ext:soulier}
Let $f_1$ and $f_2$ be two functions defined on $\mathbb{R}^{a_1}$ and
$\mathbb{R}^{a_2}$, respectively. Let $\Gamma$ be the covariance
matrix of the mean-zero Gaussian vector $Y=(Y_1,Y_2)$ where $Y_1$ and $Y_2$ are
in $\mathbb{R}^{a_1}$ and $\mathbb{R}^{a_2}$, respectively.
Assume that there exists a block diagonal matrix  $\Gamma_0$ of size
$(a_1+a_2)\times(a_1+a_2)$ built from $\Gamma$ with diagonal blocks
$\Gamma_{0,1}$ and $\Gamma_{0,2}$ of size $a_1\times a_1$ and
$a_2\times a_2$, respectively, such that
$r^{\star}:=\|\Gamma_0^{-1/2}(\Gamma_0-\Gamma)\Gamma_0^{-1/2}\|_2
\leq (1/3-\varepsilon)$,
for some positive $\varepsilon$. In the previous inequality $\|B\|_2$
denotes the spectral radius of the symmetric matrix $B$.
If at least one function $f_i$ has an Hermite rank larger than $\tau$,
then there exists a positive constant $C(a_1,a_2,\varepsilon)$ such that
\begin{equation}\label{eq:soulier_result}
|\PE[f_1(Y_1) f_2(Y_2)]|\leq C(a_1,a_2,\varepsilon) \|f_1\|_{2,\Gamma_{0,1}} \|f_2\|_{2,\Gamma_{0,2}}
(r^{\star})^{[(\tau+1)/2]}\; ,
\end{equation}
where $\|f_i\|^2_{2,\Gamma_{0,i}}=(2\pi)^{-a_i/2}|\Gamma_{0,i}|^{-1/2}
\int_{\mathbb{R}^{a_i}} f^2_i(x) \exp(-x^T \Gamma^{-1}_{0,i}x/2)
\rmd x$, $i=1,2$ and $[x]$ denotes the integer part of $x$.
\end{lemma}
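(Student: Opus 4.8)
The plan is to whiten the two blocks so that the marginals become standard Gaussian, to reduce the cross-dependence to a diagonal form by a singular value decomposition, and then to read off the bound from the action of the Mehler (conditional expectation) operator on Hermite chaoses.

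First I would set $Z=\Gamma_0^{-1/2}Y$ and $g_i=f_i\circ\Gamma_{0,i}^{1/2}$. Since $\Gamma_0=\mathrm{blockdiag}(\Gamma_{0,1},\Gamma_{0,2})$ is built from the diagonal blocks of $\Gamma$, the vector $Z=(Z_1,Z_2)$ is mean-zero Gaussian with standard marginals $Z_i\sim N(0,I_{a_i})$ and cross-covariance $B:=\Gamma_{0,1}^{-1/2}\Gamma_{12}\Gamma_{0,2}^{-1/2}$, where $\Gamma_{12}=\PE[Y_1Y_2^{\top}]$. The substitution $f_i\mapsto g_i$ is a linear isometry from $L^2$ under $N(0,\Gamma_{0,i})$ onto $L^2_\varphi(\rset^{a_i})$ that maps the polynomials of degree $\le\tau$ bijectively onto themselves; hence $\|f_i\|_{2,\Gamma_{0,i}}=\|g_i\|_2$ and the property ``Hermite rank $>\tau$'' is preserved. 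A direct computation gives $\Gamma_0^{-1/2}(\Gamma_0-\Gamma)\Gamma_0^{-1/2}=\left(\begin{smallmatrix}0&-B\\-B^{\top}&0\end{smallmatrix}\right)$, whose nonzero eigenvalues are $\pm$ the singular values of $B$, so $r^{\star}=\|B\|_{\mathrm{op}}$. Writing the SVD $B=U\Sigma V^{\top}$ and rotating each block by $U^{\top}$ and $V^{\top}$ (an $L^2_\varphi$-isometry preserving the chaos grading), I may assume $B=\Sigma$ is diagonal, so that the pairs $(Z_{1,i},Z_{2,i})$ are independent across $i$ with correlations $\sigma_1\ge\sigma_2\ge\cdots\ge0$ and $\sigma_1=r^{\star}$.

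Next I would expand $g_1,g_2$ in the tensorised Hermite basis $H_{\mathbf k}=\prod_s H_{k_s}$ and write $\PE[g_1(Z_1)g_2(Z_2)]=\langle g_1,T_Bg_2\rangle$, where $(T_Bg)(z)=\PE[g(Z_2)\mid Z_1=z]$ and $\langle\cdot,\cdot\rangle$ is the inner product of $L^2_\varphi(\rset^{a_1})$. The core estimate — the actual content of the extension of Corollary~2.1 of \cite{soulier:2001} — is that $T_B$ preserves the chaos grading and that its restriction to the $n$-th chaos has operator norm exactly $(r^{\star})^n$. I would obtain this from the multivariate Mehler identity
\[
\sum_{\mathbf k,\mathbf l}\frac{s^{\mathbf k}t^{\mathbf l}}{\mathbf k!\,\mathbf l!}\,\PE[H_{\mathbf k}(Z_1)H_{\mathbf l}(Z_2)]=\exp(s^{\top}Bt),
\]
obtained by comparing the Hermite generating function with the Gaussian moment generating function; the right-hand side being bilinear forces $\PE[H_{\mathbf k}(Z_1)H_{\mathbf l}(Z_2)]=0$ unless $|\mathbf k|=|\mathbf l|=n$, in which case it is homogeneous of degree $n$ in $B$. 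In the diagonalised coordinates $T_B=\bigotimes_i T_{\sigma_i}$ with $T_\rho H_n=\rho^n H_n$, and since a degree-$n$ tensor Hermite polynomial spreads its degree among the coordinates, each factor contracts by $\sigma_i$ raised to its local degree; the product is largest when all the degree sits on the coordinate of largest correlation, giving $(r^{\star})^n$.

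Finally, since (say) $g_1$ has Hermite rank $>\tau$, its projection $P_ng_1$ onto the $n$-th chaos vanishes for $n\le\tau$, whence
\[
\bigl|\PE[g_1(Z_1)g_2(Z_2)]\bigr|=\Bigl|\sum_{n\ge\tau+1}\langle P_ng_1,T_BP_ng_2\rangle\Bigr|\le\sum_{n\ge\tau+1}(r^{\star})^n\,\|P_ng_1\|\,\|P_ng_2\|,
\]
and Cauchy--Schwarz over $n$ yields $(r^{\star})^{\tau+1}\|g_1\|\,\|g_2\|$. Because $r^{\star}\le1/3-\varepsilon<1$ one has $(r^{\star})^{\tau+1}\le(r^{\star})^{[(\tau+1)/2]}$, so the asserted inequality holds. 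I expect the main obstacle to be the core spectral estimate of the previous paragraph: showing that the $n$-th chaos component of the cross operator is controlled by $(r^{\star})^n$ in the genuinely multivariate (rectangular-$B$) setting, which is precisely where Soulier's argument must be upgraded, through the SVD reduction and the Mehler identity. The halved exponent $[(\tau+1)/2]$, the explicit threshold $r^{\star}\le1/3-\varepsilon$ and the constant $C(a_1,a_2,\varepsilon)$ are the weaker but more robust output of Soulier's combinatorial diagram bound (there the factor $1/3$ guarantees geometric summability of the diagram series); the sharp operator-norm route above recovers the statement a fortiori with $C=1$.
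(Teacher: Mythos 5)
Your argument proves a strictly smaller statement than the lemma, and the difference is exactly the generality in which the lemma is used in this paper. Your whitening step assumes that $Z_i=\Gamma_{0,i}^{-1/2}Y_i$ has identity covariance, i.e.\ that $\Gamma_{0,i}=\PE[Y_iY_i^T]$, so that $\Gamma_0^{-1/2}(\Gamma_0-\Gamma)\Gamma_0^{-1/2}$ has vanishing diagonal blocks and reduces to a pure cross-covariance $B$. But the lemma permits $\Gamma_0$ to be a block-diagonal matrix whose blocks do \emph{not} coincide with the true covariances of $Y_1$ and $Y_2$, and this is how it is invoked both times in the proof of Lemma \ref{eq:maj_Rn}: first with $Y_1=(X_{i_1},X_{i_4})$ and $\Gamma_{0,1}=\mathrm{Id}$ even though $\PE[X_{i_1}X_{i_4}]=\rho(i_1-i_4)\neq 0$, and second with $Y_1=(X_{i_1},X_{i_3},X_{i_4})$ and $\Gamma_{0,1}$ keeping the possibly large correlation $\rho(i_1-i_3)$ but discarding those involving $X_{i_4}$. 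In these cases your reduction fails at the first line: $Z_1$ does not have standard marginals, $r^\star$ is not the operator norm of any cross-covariance (the perturbation $\Gamma_0-\Gamma$ has non-zero entries \emph{inside} the diagonal blocks), and the Mehler/chaos-grading machinery collapses --- for instance $\PE[H_1(Z_{1,1})H_1(Z_{1,2})\cdot 1]=\PE[Z_{1,1}Z_{1,2}]\neq 0$ pairs a degree-$2$ polynomial of $Z_1$ with a degree-$0$ polynomial of $Z_2$, so the cross operator no longer preserves the grading and the spectral estimate $(r^\star)^n$ on the $n$-th chaos is not even well defined.

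Within the special case $\Gamma_{0,i}=\mathrm{Cov}(Y_i)$ your proof is correct and indeed sharper (constant $1$ and exponent $\tau+1$ instead of $[(\tau+1)/2]$, the stated bound following from monotonicity of $x\mapsto (r^\star)^x$ since $r^\star<1$). The paper's proof is structured differently precisely to accommodate within-block perturbations: it never normalizes the marginals, but writes $\PE[f_1(Y_1)f_2(Y_2)]$ as an integral of $f(\Gamma_0^{1/2}z)$ against the standard Gaussian times the density-ratio factor $\exp(-z^T\Delta z/2)$ with $\Delta=(\Gamma_0^{-1/2}\Gamma\Gamma_0^{-1/2})^{-1}-\mathrm{I}$, and expands that exponential in powers of the quadratic form. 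The Hermite-rank hypothesis (taken with respect to $N(0,\Gamma_{0,i})$, not the true law of $Y_i$) annihilates the terms of order $k$ with $2k\leq\tau-1$, which is also why the exponent is the halved quantity $[(\tau+1)/2]$ --- each power of $\Delta$ carries polynomial degree two --- and the threshold $1/3$ ensures $\delta\leq r^\star/(1-r^\star)<1/2$ so that the Gaussian integral appearing after Cauchy--Schwarz converges. Repairing your approach would require controlling the mismatch between $N(0,\Gamma_{0,i})$ and the actual marginal law of $Y_i$, which is essentially this density-ratio expansion; so the gap is structural, not cosmetic.
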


We shall use the following notation: for a
  Gaussian vector $(X_1,X_2,X_3,X_4)$ with covariance matrix $\Gamma$
  and for any real-valued function of this vector, the expected value
$\PE[f(X_1,X_2,X_3,X_4)]$ will be denoted by
$\PE_{\Gamma}[f(X_1,X_2,X_3,X_4)]$.

\begin{lemma}\label{l:bar-gamma}
Let $(X_1,X_2,X_3,X_4)$ be a Gaussian vector with mean 0 and
covariance matrix
$$
\Gamma=
\begin{bmatrix}
\Gamma_{11}&\Gamma_{12}\\
\Gamma_{21}&\Gamma_{22}\\
\end{bmatrix}
=
\begin{bmatrix}
1&\rho_{12}&\vline&\rho_{13}&\rho_{14}\\
\rho_{12}&1&\vline&\rho_{23}&\rho_{24}\\
\hline
\rho_{13}&\rho_{23}&\vline&1&\rho_{34}\\
\rho_{14}&\rho_{24}&\vline&\rho_{34}&1\\
\end{bmatrix}
$$
and let $J_a$ and $J_b$ be functions from $\rset^2$ to $\rset$ such
that $\PE_{\Gamma}[J_a(X_1,X_2)^2]<\infty$ and 
$\PE_{\Gamma}[J_b(X_3,X_4)^2]<\infty$. Then, there is a Gaussian
vector $(\bar{X}_1,\bar{X}_2,\bar{X}_3,\bar{X}_4)$ with mean 0
and covariance matrix
$$
\bar{\Gamma}=
\begin{bmatrix}
\bar{\Gamma}_{11}&\bar{\Gamma}_{12}\\
\bar{\Gamma}_{21}&\bar{\Gamma}_{22}\\
\end{bmatrix}
=
\begin{bmatrix}
1&0&\vline&\bar{\rho}_{13}&\bar{\rho}_{14}\\
0&1&\vline&\bar{\rho}_{23}&\bar{\rho}_{24}\\
\hline
\bar{\rho}_{13}&\bar{\rho}_{23}&\vline&1&0\\
\bar{\rho}_{14}&\bar{\rho}_{24}&\vline&0&1\\
\end{bmatrix}
$$
with $\bar{\rho}_{13}=\rho_{13}$, 
$\bar{\rho}_{14}=(\rho_{14}-\rho_{13}\rho_{34})/\sqrt{1-\rho_{34}^2}$,
$\bar{\rho}_{23}=(\rho_{23}-\rho_{12}\rho_{13})/\sqrt{1-\rho_{12}^2}$,
\begin{equation*}
\bar{\rho}_{24}=\frac{\rho_{24}+\rho_{12}\rho_{13}\rho_{34}-\rho_{12}\rho_{14}-\rho_{23}\rho_{34}}
{\sqrt{1-\rho_{34}^2}\sqrt{1-\rho_{12}^2}}\; .
\end{equation*}
If $|\rho_{ij}|\leq\rho^\star$ for all $i,j$, then
$\bar{\rho}_{ij}\leq\bar{\rho}^\star$ for all $i,j$, where
\begin{equation}\label{e:rhobar}
\bar{\rho}^\star=\frac{4\rho^{\star}}{1-(\rho^\star)^2}\; .
\end{equation}
There are, moreover, functions $\bar{J}_a$ and $\bar{J}_b$ such that
$$
\PE_{\Gamma}[J_a(X_1,X_2) J_b(X_3,X_4)]
=\PE_{\bar{\Gamma}}[\bar{J}_a(\bar{X}_1,\bar{X}_2)
\bar{J}_b(\bar{X}_3,\bar{X}_4)]\; ,
$$
$$
\PE_{\Gamma}[J_a(X_1,X_2)]=\PE_{\bar{\Gamma}}[\bar{J}_a(\bar{X}_1,\bar{X}_2)]\;
,\;
\PE_{\Gamma}[J_b(X_3,X_4)]=\PE_{\bar{\Gamma}}[\bar{J}_b(\bar{X}_3,\bar{X}_4)]\; ,
$$
and
$$
\PE_{\Gamma}[J_a(X_1,X_2)^2]=\PE_{\bar{\Gamma}}[\bar{J}_a(\bar{X}_1,\bar{X}_2)^2]\;
,\;
\PE_{\Gamma}[J_b(X_3,X_4)^2]=\PE_{\bar{\Gamma}}[\bar{J}_b(\bar{X}_3,\bar{X}_4)^2]\; .
$$
If $J_a$ and $J_b$ are bounded, then $\bar{J}_a$ and $\bar{J}_b$ are
bounded as well.
\end{lemma}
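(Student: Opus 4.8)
The plan is to realize $(\bar X_1,\bar X_2,\bar X_3,\bar X_4)$ as an explicit invertible linear image of $(X_1,X_2,X_3,X_4)$, obtained by a Gram--Schmidt whitening carried out \emph{separately inside each block}, and then to define $\bar J_a,\bar J_b$ as the compositions of $J_a,J_b$ with the inverse maps. Assuming the within-block nondegeneracy $|\rho_{12}|<1$, $|\rho_{34}|<1$ (automatic when $\Gamma$ is positive definite, and in particular under the bound $\rho^\star<1$ used in the application), I would set
$$
\bar X_1=X_1,\quad \bar X_2=\frac{X_2-\rho_{12}X_1}{\sqrt{1-\rho_{12}^2}},\quad \bar X_3=X_3,\quad \bar X_4=\frac{X_4-\rho_{34}X_3}{\sqrt{1-\rho_{34}^2}}\; .
$$
Being a linear transformation, $(\bar X_1,\dots,\bar X_4)$ is again mean-zero Gaussian, and the inverse reads $X_1=\bar X_1$, $X_2=\rho_{12}\bar X_1+\sqrt{1-\rho_{12}^2}\,\bar X_2$, with the analogous formulas for $X_3,X_4$.

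First I would verify that this whitening produces exactly the claimed $\bar\Gamma$. A direct variance/covariance computation gives $\Var(\bar X_i)=1$ for each $i$ and $\Cov(\bar X_1,\bar X_2)=\Cov(\bar X_3,\bar X_4)=0$, which accounts for the identity diagonal blocks. For the off-diagonal block I would expand each cross-covariance by bilinearity of $\Cov$; for instance $\Cov(\bar X_2,\bar X_4)$ yields $\rho_{24}-\rho_{23}\rho_{34}-\rho_{12}\rho_{14}+\rho_{12}\rho_{13}\rho_{34}$ divided by $\sqrt{1-\rho_{12}^2}\sqrt{1-\rho_{34}^2}$, matching the stated $\bar\rho_{24}$, and the entries $\bar\rho_{13},\bar\rho_{14},\bar\rho_{23}$ follow the same way. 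This step is pure bookkeeping.

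Next, for the correlation bound I would apply the triangle inequality to each numerator together with the elementary estimates $\sqrt{1-\rho_{12}^2}\sqrt{1-\rho_{34}^2}\ge 1-(\rho^\star)^2$ and $(1-\rho_{34}^2)^{-1/2}\le (1-(\rho^\star)^2)^{-1/2}$. The binding term is $\bar\rho_{24}$: its numerator is bounded by $\rho^\star(1+\rho^\star)^2$, giving $|\bar\rho_{24}|\le \rho^\star(1+\rho^\star)/(1-\rho^\star)$, and the inequality $(1+\rho^\star)^2\le 4$ for $\rho^\star\le 1$ upgrades this to the clean bound $\bar\rho^\star=4\rho^\star/(1-(\rho^\star)^2)$, which dominates the three remaining entries as well. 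Finally I would set
$$
\bar J_a(u,v)=J_a\bigl(u,\rho_{12}u+\sqrt{1-\rho_{12}^2}\,v\bigr),\qquad \bar J_b(u,v)=J_b\bigl(u,\rho_{34}u+\sqrt{1-\rho_{34}^2}\,v\bigr)\; ,
$$
so that $\bar J_a(\bar X_1,\bar X_2)=J_a(X_1,X_2)$ and $\bar J_b(\bar X_3,\bar X_4)=J_b(X_3,X_4)$ as random variables. Since $(\bar X_1,\dots,\bar X_4)\sim N(0,\bar\Gamma)$, the product identity and the two marginal identities are then immediate, and the two second-moment identities follow by squaring these same equalities. Boundedness of $\bar J_a,\bar J_b$ is clear, as they are $J_a,J_b$ evaluated at transformed arguments.

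I expect no genuine obstacle in this argument: the only mildly delicate point is confirming that the single constant $4$ in $\bar\rho^\star$ is large enough to absorb all four cross-covariances simultaneously, which as sketched above reduces to the elementary inequality $(1+\rho^\star)^2\le 4$ driven by the four-term numerator of $\bar\rho_{24}$.
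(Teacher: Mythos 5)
Your proposal is correct and is essentially the paper's own proof: your componentwise Gram--Schmidt whitening is exactly the paper's transformation $(\bar X_1,\bar X_2)^T=L_a^{-1}(X_1,X_2)^T$, $(\bar X_3,\bar X_4)^T=L_b^{-1}(X_3,X_4)^T$ with the lower-triangular Cholesky factors of $\Gamma_{11}$ and $\Gamma_{22}$, and your $\bar J_a,\bar J_b$ coincide with the paper's $J_a\circ L_a$, $J_b\circ L_b$. The only difference is presentational (explicit coordinates versus matrix algebra), and you supply the verification of the bound $\bar\rho^\star=4\rho^\star/(1-(\rho^\star)^2)$ that the paper leaves implicit.
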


The proof of Lemma \ref{l:bar-gamma} is in the supplemental article
\cite{supplement}.

\begin{proof}[Proof of Lemma \ref{eq:maj_Rn}]
Note that $R_n(t)-R_n(s)$ can be written as
\begin{eqnarray}\label{eq:R_n(t)-R_n(s)}
R_n(t)-R_n(s)=\frac{1}{n(n-1)}\sum_{1\leq i\neq j\leq n} J(X_i,X_j)\; ,
\end{eqnarray}
where
\begin{multline}\label{eq:decomp_J}
J(x,y)=J_{s,t}(x,y)=\{h(x,y,t)-h(x,y,s)\}-\{h_1(x,t)-h_1(x,s)\}\\
-\{h_1(y,t)-h_1(y,s)\}+\{U(t)-U(s)\}.
\end{multline}
In the sequel, we
shall drop for convenience the subscripts $s$ and $t$.
In view of the definition of $h$, $h_1$ and $U$ in
(\ref{def:h}), (\ref{def:h_1}) and (\ref{def:U_hoeff}) respectively,
one has
\begin{equation}\label{e:Jbounded}
\|J\|_{\infty}\leq 4\; ,
\end{equation}
that is, $J$ is bounded. Then, by Conditions (\ref{eq:G_lip})
and  (\ref{assum:h_1_Lipschitz}), for any Gaussian vector
$(X_i,X_j,X_k,X_{\ell})$, one has
\begin{equation}\label{e:J2}
\PE[|J(X_i,X_j)J(X_k,X_{\ell})|]\leq C \;\PE[|J(X_i,X_j)|]\leq C\; |t-s|\; ,
\end{equation}
for some positive constant $C$ which may change from line to line.
By the degeneracy of Hoeffding projections, expanding $J$ into the basis of Hermite polynomials
leads to:
\begin{equation}\label{eq:exp_J}
J(x,y)=\sum_{p,q\geq0}\frac{c_{p,q}(s,t)}{p!\;q!}
H_{p}(x)  H_{q}(y)\;, \text{ with }\;
c_{0,p}=c_{p,0}=0 \; , \forall p\geq 0\; ,
\end{equation}
where
\begin{equation}\label{e:c1}
c_{p,q}(s,t)=\PE[J(X,Y)H_{p}(X)H_{q}(Y)]\; ,
\end{equation}
$X$ and $Y$ being independent standard Gaussian random
variables.
Therefore, using (\ref{e:J2}),
\begin{equation}\label{e:c2}
|c_{p,q}|\leq \PE[J(X,Y)^2]^{1/2} (p!\;q!)^{1/2}\leq C (p!\;q!)^{1/2} |t-s|^{1/2}\; .
\end{equation}
Remark that the sum in (\ref{eq:exp_J}) is over $p$ and $q$ such that
$p+q\geq m$, since the Hermite rank of $J$ is greater than or equal to
the Hermite rank of $h$.
Using (\ref{eq:R_n(t)-R_n(s)}), we obtain that
\begin{equation}\label{e:RJ}
\PE[\{R_n(t)-R_n(s)\}^2]\leq\frac{1}{n^2(n-1)^2}
\sum_{\stackrel{1\leq i_1\neq i_2\leq n}{1\leq i_3\neq i_4\leq n}}
\PE[J(X_{i_1},X_{i_2})J(X_{i_3},X_{i_4})]\; .
\end{equation}
\noindent
We shall consider 3 cases depending on the cardinality of the set
$\{i_1,i_2,i_3,i_4\}$.

1) We first address the case where $i_1=i_3$ and $i_2=i_4$. Using
(\ref{e:J2}), we get
$$
\frac{1}{n^2(n-1)^2}\sum_{1\leq i_1\neq i_2\leq n}
\PE[J(X_{i_1},X_{i_2})^2]\leq\frac{C}{n^2}|t-s|\; ,
$$
which is consistent with (\ref{e:boundR}).

2) Let us now consider the case where the cardinality of the set
$\{i_1,i_2,i_3,i_4\}$ is 3 and suppose without loss of generality that
$i_1=i_3$. Suppose also that $\rho$ defined in Assumption
(A\ref{assum:long-range}) has the following property: there exists
some positive $\rho^\star$ such that
\begin{equation}\label{eq:r_star}
|\rho(k)|\leq \rho^\star<1/13,\textrm{ for all }k\geq 1\;.
\end{equation}
If we apply the same arguments as in the previous case, we get a rate
of order $1/n$ instead of the desired rate $1/n^{1+\alpha}$.
To obtain the latter rate, we propose to approximate $J$ by
a smooth function $J_{\varepsilon}$ using a convolution approach.
More precisely, we define, for all $x,y$ in $\rset$,
\begin{equation}\label{eq:defJ_vareps}
J_{\varepsilon}(x,y)
=\int J(x-\varepsilon z,y-\varepsilon
z')\varphi(z)\varphi(z')\rmd z \rmd z'\; .
\end{equation}
Thus,
\begin{multline}\label{e:JJ}
\PE[J(X_{i_1},X_{i_2})J(X_{i_1},X_{i_4})]=
\PE[J_{\varepsilon}(X_{i_1},X_{i_2})J(X_{i_1},X_{i_4})]\\
+\PE[(J-J_{\varepsilon})(X_{i_1},X_{i_2})J(X_{i_1},X_{i_4})]\; .
\end{multline}
Applying Lemma \ref{lem:controle_hermite} to $f=J_{\varepsilon}$ and
noting that, by Condition (\ref{cond:h_suppl}), $\|\partial^6 J_{\varepsilon} /\partial x^3\partial
y^3\|\leq C \varepsilon^{-6}|t-s|^{1/2}$, for some positive constant $C$, we
obtain
\begin{multline*}
\PE[J_{\varepsilon}(X_{i_1},X_{i_2})J(X_{i_1},X_{i_4})]\\
\leq \PE[\sum_{p,q\geq1}\frac{|c_{p,q}(J_\varepsilon)|}{p!\;q!}
|H_{p}(X_{i_1})J(X_{i_1},X_{i_4})H_{q}(X_{i_2})|]\\\leq C
\varepsilon^{-6}|t-s|^{1/2}\sum_{p,q\geq 3}(p! q!)^{-1} \sqrt{(p-3)!} \sqrt{(q-3)!}\\
 |\PE[H_{p}(X_{i_1}) J(X_{i_1},X_{i_4})H_{q}(X_{i_2})]|\; ,
\end{multline*}
where $c_{p,q}(J_\varepsilon)$ is the $(p,q)$th Hermite coefficient
of $J_\varepsilon$.
We shall apply Lemma \ref{lem:ext:soulier}
with $Y_1=(X_{i_1},X_{i_4})$, $Y_2=X_{i_2}$, $a_1=2$, $a_2=1$, $\Gamma_{0,1}=\textrm{Id}$, $\Gamma_{0,2}=1$,
$f_1=H_p J$ and $f_2=H_q$. Observe that $\textrm{Id}-\Gamma$ is a
$3\times 3$ matrix with $\rho$ entries and hence
$\|\textrm{Id}-\Gamma\|_2\leq (a_1+a_2)\|\textrm{Id}-\Gamma\|_{\infty}=3\|\textrm{Id}-\Gamma\|_{\infty}=3\rho^{\star}$,
where $\|A\|_{\infty}$ is defined for a matrix $A=(a_{i,j})_{i,j}$
by $\|A\|_{\infty}=\max_{i,j} |a_{i,j}|$. Hence, by (\ref{eq:r_star}),
the condition on $r^{\star}$ of Lemma
\ref{lem:ext:soulier} is satisfied. Since $J$ is bounded
and $f_2$ is of Hermite rank larger than 2, Lemma
\ref{lem:ext:soulier} with $[(2+1)/2]=1$ implies that there exists a positive constant $C$ such that
\begin{equation}\label{eq:soulier}
|\PE[H_{p}(X_{i_1}) J(X_{i_1},X_{i_4})H_{q}(X_{i_2})]|
\leq C\sqrt{p!\;q!}\;(|\rho(i_4-i_2)|\vee |\rho(i_2-i_1)|\vee |\rho(i_4-i_1)|)\; .
\end{equation}
Hence,
\begin{equation}
\PE[J_{\varepsilon}(X_{i_1},X_{i_2})J(X_{i_1},X_{i_4})]
\leq C \varepsilon^{-6}|t-s|^{1/2}\;(|\rho(i_4-i_2)|+ |\rho(i_2-i_1)|+ |\rho(i_4-i_1)|)\; .
\end{equation}
Since, for example, $\sum_{1\leq i_2\neq i_4\leq n}|\rho(i_4-i_2)|\leq
n\sum_{|k|<n}|\rho(k)|$, and since there exist positive constants $C$ and $\delta$ such that
$|\rho(k)|\leq C(1\wedge |k|^{-D+\delta})$, for all $k\geq 1$, we obtain
that $\sum_{1\leq i_2\neq i_4\leq n}|\rho(i_4-i_2)|\leq C
n^{2-D+\delta}$. Hence,
\begin{equation}\label{eq:regul}
\frac{1}{n^2(n-1)^2}
\sum_{\stackrel{1\leq i_1\neq i_2\leq n}{1\leq i_1\neq i_4\leq n}}
\PE[J_{\varepsilon}(X_{i_1},X_{i_2})J(X_{i_1},X_{i_4})]
\leq \frac{C \varepsilon^{-6}|t-s|^{1/2}}{n^{1+D-\delta}}\; .
\end{equation}
We now focus on the last term in (\ref{e:JJ}).
By the Cauchy-Schwarz inequality
and (\ref{e:J2}),
\begin{multline}\label{e:Jeps1}
\frac{1}{n^2(n-1)^2}\sum_{\stackrel{1\leq i_1\neq i_2\leq n}{1\leq i_1\neq i_4\leq n}}
\PE[(J- J_{\varepsilon})(X_{i_1},X_{i_2})J(X_{i_1},X_{i_4})]\\
\leq C\frac{|t-s|^{1/2}}{n^2(n-1)}\sum_{1\leq i_1\neq i_2\leq n}
\PE[(J- J_{\varepsilon})^2(X_{i_1},X_{i_2})]^{1/2}\; .
\end{multline}
Using (\ref{eq:defJ_vareps}), the Jensen's inequality and (\ref{e:Jbounded}),
\begin{multline}\label{e:Jeps2}
\PE[(J- J_{\varepsilon})^2(X_{i_1},X_{i_2})]\\
=\int_{\mathbb{R}^2}\left\{\int_{\mathbb{R}^2}[J(x,y)-J(x-\varepsilon z,y-\varepsilon z')]
\varphi(z)\varphi(z')\rmd z\rmd z'\right\}^2 f_{i_1,i_2}(x,y)\rmd x\rmd y\\
\leq\int_{\mathbb{R}^2}\left\{\int_{\mathbb{R}^2}[J(x,y)-J(x-\varepsilon z,y-\varepsilon
z')]^2f_{i_1,i_2}(x,y)\rmd x\rmd y\right\}
\varphi(z)\varphi(z')\rmd z\rmd z'\\
\leq C \int_{\mathbb{R}^2}\{\int_{\mathbb{R}^2}|J(x,y)-J(x-\varepsilon z,y-\varepsilon
z')|f_{i_1,i_2}(x,y)\rmd x\rmd y\}
\varphi(z)\varphi(z')\rmd z\rmd z'\; ,
\end{multline}
where $f_{i_1,i_2}$ is the p.d.f of $(X_{i_1},X_{i_2})$.
By (\ref{eq:decomp_J}), Conditions \eqref{cond:contr_G} and (\ref{cond:h_1}),
\begin{multline}\label{e:Jeps3}
\int_{\mathbb{R}^2}\{\int_{\mathbb{R}^2}|J(x,y)-J(x-\varepsilon z,y-\varepsilon
z')|f_{i_1,i_2}(x,y)\rmd x\rmd y\}
\varphi(z)\varphi(z')\rmd z\rmd z'\\
\leq C\varepsilon\int_{\mathbb{R}^2}(|z|+|z'|)\varphi(z)\varphi(z')\rmd z\rmd z'
\leq C\varepsilon\; .
\end{multline}
Using (\ref{e:Jeps1}), (\ref{e:Jeps2}) and (\ref{e:Jeps3}), we get
\begin{equation}\label{eq:regul_diff}
\frac{1}{n^2(n-1)^2}\sum_{\stackrel{1\leq i_1\neq i_2\leq n}{1\leq i_1\neq i_4\leq n}}
\PE[(J- J_{\varepsilon})(X_{i_1},X_{i_2})J(X_{i_1},X_{i_4})]
\leq C\frac{\varepsilon^{1/2} |t-s|^{1/2}}{n}\; .
\end{equation}
Note that (\ref{eq:regul}) involves the factor $\varepsilon^{-6}$ and
(\ref{eq:regul_diff}) involves the factor $\varepsilon^{1/2}$.
By choosing $\varepsilon=\varepsilon_n=n^{-\nu}$ with
$0<\nu<(D-\delta)/6$ in (\ref{eq:regul}) and (\ref{eq:regul_diff}),
we obtain a result consistent with (\ref{e:boundR}).

If Condition \eqref{eq:r_star} is not satisfied then let
$\tau$ be such that $\rho(k)\leq \rho^\star<1/13$, for all $k>\tau$.
In the case where, for instance, $|i_2-i_4|\leq\tau$ then, using that
$J$ is bounded, Conditions (\ref{eq:G_lip}) and
\eqref{assum:h_1_Lipschitz}, we get that
$$
\frac{1}{n^2(n-1)^2}
\sum_{\stackrel{1\leq i_1\neq i_2\leq n}{1\leq i_1\neq i_4\leq n,|i_2-i_4|\leq\tau}}
\PE[J(X_{i_1},X_{i_2})J(X_{i_1},X_{i_4})]\leq C\frac{\tau |t-s|}{n^2}\; ,
$$
instead of (\ref{eq:regul}), but the result is still consistent with
(\ref{e:boundR}).
The same result holds when $|i_1-i_4|\leq\tau$ or $|i_1-i_2|\leq\tau$.
Note also that the remaining sum over the indices such that $|i_1-i_2|>\tau$,
$|i_1-i_4|>\tau$ and $|i_2-i_4|>\tau$ can be addressed in the same way as
when Condition \eqref{eq:r_star} is satisfied.

3) Now, we assume that the cardinal number of the set
$\{i_1,i_2,i_3,i_4\}$
equals 4 and that Condition \eqref{eq:r_star} holds.
By Lemma \ref{l:bar-gamma},
\begin{equation}\label{e:JJbar}
\PE[J(X_{i_1},X_{i_2})J(X_{i_3},X_{i_4})]
=\PE_{\bar{\Gamma}}[J_{i_1,i_2}(\bar{X}_{i_1},\bar{X}_{i_2})J_{i_3,i_4}(\bar{X}_{i_3},\bar{X}_{i_4})]\;.
\end{equation}
Here $(\bar{X}_{i_1},\bar{X}_{i_2},\bar{X}_{i_3},\bar{X}_{i_4})$ is a
Gaussian vector with mean 0 and covariance matrix $\bar{\Gamma}$
defined in Lemma \ref{l:bar-gamma} where $\rho_{ij}=\rho(i-j)$,
$J_a=J_b=J$,
$\bar{J}_a=J_{i_1,i_2}$ and $\bar{J}_b=J_{i_3,i_4}$.
Since the covariance of $(\bar{X}_{i_1},\bar{X}_{i_2})$ and
$(\bar{X}_{i_3},\bar{X}_{i_4})$ is the identity matrix, we can expand
$J_{i_1,i_2}(\bar{X}_{i_1},\bar{X}_{i_2})$ and 
$J_{i_3,i_4}(\bar{X}_{i_3},\bar{X}_{i_4})$.
$J_{i_1,i_2}(\bar{X}_{i_1},\bar{X}_{i_2})$
is the limit in $L^2$, as $K\to\infty$, of
\begin{equation}\label{e:JKsum}
J_{i_1,i_2}^K(\bar{X}_{i_1},\bar{X}_{i_2})
=\sum_{p=1}^K \frac{c_{p_1,p_2}^{i_1,i_2}}{\sqrt{p_1! p_2!}}
H_{p_1}(\bar{X}_{i_1})H_{p_2}(\bar{X}_{i_2})\; ,
\end{equation}
with a similar expansion for
$J_{i_3,i_4}^K(\bar{X}_{i_3},\bar{X}_{i_4})$.
Therefore,
\begin{equation}\label{e:JJK}
\lim_{K\to\infty}\PE_{\bar{\Gamma}}[J_{i_1,i_2}(\bar{X}_{i_1},\bar{X}_{i_2})
J_{i_3,i_4}(\bar{X}_{i_3},\bar{X}_{i_4})
-J_{i_1,i_2}^K(\bar{X}_{i_1},\bar{X}_{i_2})
J_{i_3,i_4}^K(\bar{X}_{i_3},\bar{X}_{i_4})]=0\; .
\end{equation}
Thus it is enough to majorize
\begin{multline}\label{e:bd1}
\PE_{\bar{\Gamma}}[J_{i_1,i_2}^K(\bar{X}_{i_1},\bar{X}_{i_2})
J_{i_3,i_4}^K(\bar{X}_{i_3},\bar{X}_{i_4})]\\
\leq \sum_{1\leq p_1,p_2\leq K}\;\sum_{1\leq p_3,p_4\leq K}
\frac{|c_{p_1,p_2}^{i_1,i_2}|}{p_1! p_2!}\frac{|c_{p_3,p_4}^{i_3,i_4}|}{p_3! p_4!}
|\PE_{\bar{\Gamma}}[H_{p_1}(\bar{X}_1) H_{p_2}(\bar{X}_2) H_{p_3}(\bar{X}_3) H_{p_4}(\bar{X}_4)]|\; .
\end{multline}
By Lemma 3.2 in \cite{taqqu:1977}, $\PE_{\bar{\Gamma}}[H_{p_1}(\bar{X}_{i_1})
H_{p_2}(\bar{X}_{i_2}) H_{p_3}(\bar{X}_{i_3}) H_{p_4}(\bar{X}_{i_4})]$
is zero if $p_1+\dots+p_4$ is odd. Otherwise it is bounded by a
constant times a sum of products of $(p_1+\dots+p_4)/2$
covariances. These will be denoted
$\bar{\rho}_{i,j}=\PE(\bar{X}_i\bar{X}_j)$ and are given in Lemma
\ref{l:bar-gamma}. Since $\rho(k)\leq \rho^{\star}<1/13$, we have that
$\bar{\rho}_{i,j}\leq\bar{\rho}^{\star}<1/3$, where 
$\bar{\rho}^{\star}=4\rho^{\star}/(1-(\rho^{\star})^2)$ by (\ref{e:rhobar}).
Bounding, in each product of covariances, all the covariances but two, by
$\bar{\rho}^{\star}<1/3$, we get that \\$\PE_{\bar{\Gamma}}[H_{p_1}(\bar{X}_{i_1})
H_{p_2}(\bar{X}_{i_2}) H_{p_3}(\bar{X}_{i_3}) H_{p_4}(\bar{X}_{i_4})]$ is bounded by
\begin{equation}\label{e:bd2}
C\;(3\bar{\rho}^{\star})^{\frac{p_1+p_2+p_3+p_4}{2}-2}\bar{A}(i_1,i_2,i_3,i_4)
|\PE[H_{p_1}(X) H_{p_2}(X) H_{p_3}(X) H_{p_4}(X)]|\;,
\end{equation}
where, since $\bar{\rho}_{i_1,i_2}=\bar{\rho}_{i_3,i_4}=0$,
\begin{multline}\label{e:Abar}
\bar{A}(i_1,i_2,i_3,i_4)=
\bar{\rho}_{i_1,i_3}\bar{\rho}_{i_2,i_4}
+\bar{\rho}_{i_2,i_3}\bar{\rho}_{i_1,i_4}
+\bar{\rho}_{i_1,i_3}\bar{\rho}_{i_2,i_3}
+\bar{\rho}_{i_1,i_4}\bar{\rho}_{i_2,i_4}\\
+\bar{\rho}_{i_1,i_3}\bar{\rho}_{i_1,i_4}
+\bar{\rho}_{i_2,i_3}\bar{\rho}_{i_2,i_4}\; ,
\end{multline}
and where $X$ is a standard Gaussian random variable.
Note also that the hypercontractivity Lemma 3.1 in \cite{taqqu:1977} yields
\begin{equation}\label{eq:taqqu_bound}
|\PE[H_{p_1}(X)H_{p_2}(X)H_{p_3}(X)H_{p_4}(X)]|\leq 3^{\frac{p_1+p_2+p_3+p_4}{2}}\sqrt{p_1!\;p_2!\;p_3!\;p_4!}\; .
\end{equation}
Thus (\ref{e:bd1}) is bounded by
$$
C\bar{A}\left(\sum_{1\leq p_1,p_2\leq K}
\frac{|c_{p_1,p_2}^{i_1,i_2}|}{\sqrt{p_1!\;p_2!}}(3\bar{\rho}^\star)^{\frac{p_1+p_2}{2}-1}\right)
\left(\sum_{1\leq p_3,p_4\leq K}
\frac{|c_{p_3,p_4}^{i_3,i_4}|}{\sqrt{p_3!\;p_4!}}(3\bar{\rho}^\star)^{\frac{p_3+p_4}{2}-1}\right)\; .
$$
By the Cauchy-Schwarz inequality, the first term in brackets is
bounded by
\begin{multline}\label{e:Abd}
\left(\sum_{1\leq p_1,p_2\leq K}\frac{(c_{p_1,p_2}^{i_1,i_2})^2}{p_1!\;p_2!}\right)^{1/2}
\left(\sum_{1\leq p_1,p_2\leq K}
  (3\bar{\rho}^\star)^{p_1+p_2-2}\right)^{1/2}\\
\leq
\PE_{\textrm{I}}\left[J_{i_1,i_2}(\bar{X}_{i_1},\bar{X}_{i_2})^2\right]^{1/2}
\left(\sum_{p\geq 1}(3\bar{\rho}^\star)^{p-1}\right)\; ,
\end{multline}
where $\textrm{I}$ is the identity matrix and similarly for the second
term. Since $\bar{\rho}^\star<1/3$, it
follows from Lemma \ref{l:bar-gamma} that (\ref{e:bd1}) is bounded by
\begin{multline}\label{e:bd2}
C \bar{A}\;
\PE_{\textrm{I}}\left[J_{i_1,i_2}(\bar{X}_{i_1},\bar{X}_{i_2})\right]^{1/2}
\PE_{\textrm{I}}\left[J_{i_3,i_4}(\bar{X}_{i_3},\bar{X}_{i_4})\right]^{1/2}\\
=C \bar{A}\;
\PE_{\Gamma_{11}}\left[J(X_{i_1},X_{i_2})^2\right]^{1/2}
\PE_{\Gamma_{22}}\left[J(X_{i_3},X_{i_4})^2\right]^{1/2}
\leq C \bar{A} |t-s|\; ,
\end{multline}
where we used (\ref{e:J2}) and the fact that $J$ is bounded.
Thus, in wiew of (\ref{e:JJbar}), (\ref{e:JJK}), (\ref{e:bd1}) and
(\ref{e:bd2}),
we have
\begin{multline}\label{e:bd3}
\frac{1}{n^2(n-1)^2}\sum_{\stackrel{1\leq i_1,i_2,i_3,i_4\leq n}
{|\{i_1,i_2,i_3,i_4\}|=4}}\PE[J(X_{i_1},X_{i_2})J(X_{i_3},X_{i_4})]\\
\leq C\frac{|t-s|}{n^2(n-1)^2}\sum_{\stackrel{1\leq i_1,i_2,i_3,i_4\leq n}
{|\{i_1,i_2,i_3,i_4\}|=4}}\bar{A}(i_1,i_2,i_3,i_4)\; .
\end{multline}
We need to evaluate that sum. Recall that
$\bar{A}=\bar{A}(i_1,i_2,i_3,i_4)$ is defined in (\ref{e:Abar}) with
the $\bar{\rho}_{i,j}$ defined in Lemma \ref{l:bar-gamma}. We shall
treat one summand in $\bar{A}$ (the others are treated in the same
way). We have
$$
\bar{\rho}_{i_1,i_3}\bar{\rho}_{i_2,i_4}
\leq C \rho(i_1-i_3)[\rho(i_3-i_4)+\rho(i_1-i_4)+\rho(i_3-i_4)+\rho(i_2-i_4)]\;.
$$
Using that there exist positive constants $C$ (changing from line to
line) and $\varepsilon$ such that
$|\rho(k)|\leq C(1\wedge |k|^{-D+\varepsilon})$, for all $k\geq 1$, we get
\begin{multline*}
\sum_{\stackrel{1\leq i_1,i_2,i_3,i_4\leq n}{|\{i_1,i_2,i_3,i_4\}|=4}}
\rho(i_1-i_2)\rho(i_3-i_4)=(\sum_{1\leq i_1\neq i_2\leq
  n}\rho(i_1-i_2))^2\leq n^2 (\sum_{|k|<n}\rho(k))^2\\
\leq  C
n^{4-2D+2\varepsilon}\; ,
\end{multline*}
and
\begin{multline*}
\sum_{\stackrel{1\leq i_1,i_2,i_3,i_4\leq n}{|\{i_1,i_2,i_3,i_4\}|=4}}
\rho(i_1-i_2)\rho(i_2-i_4)\leq n\sum_{i_2=1}^n(\sum_{1\leq i_1\neq i_2\leq
  n}
\rho(i_1-i_2))^2\\
=n\sum_{i_2=1}^n(\sum_{i_1=1}^{i_2-1}\rho(i_1-i_2)
+\sum_{i_1=i_2+1}^{n}\rho(i_1-i_2))^2
\leq C n\sum_{i_2=1}^n i_2^{2-2D+2\varepsilon}
\leq C n^{4-2D+2\varepsilon}\; .
\end{multline*}
Therefore, Relation \eqref{e:bd3} is bounded by
$Cn^{-2D+2\varepsilon}|t-s|$, which is a result consistent with
(\ref{e:boundR}) with $2\varepsilon=D-1/2 >0.$

If Condition \eqref{eq:r_star} is not satisfied, then let
$\tau$ be such that
\begin{equation}\label{eq:tau_soulier}
\sup_{k>\tau}\rho(k)\leq \frac{1}{13}(1-\sup_{1\leq
  k\leq\tau}\rho(k))\; .
\end{equation}
In the case where, for instance, $|i_1-i_3|\leq\tau$ and
$|i_2-i_4|\leq\tau$, there is no need to use (\ref{eq:tau_soulier}) because we get using
(\ref{e:J2}),
\begin{multline}
\frac{1}{n^2(n-1)^2}
\sum_{\stackrel{1\leq i_1,i_2,i_3,i_4\leq n}{|\{i_1,i_2,i_3,i_4\}|=4,|i_1-i_3|\leq\tau,|i_2-i_4|\leq\tau}}
\PE[J(X_{i_1},X_{i_2})J(X_{i_3},X_{i_4})]\\
\leq C\frac{\tau^2 |t-s|}{n^2}\; ,
\end{multline}
which is consistent with (\ref{e:boundR}).
In the case where, for instance, $|i_1-i_3|\leq\tau$ and the other
distances are larger than $\tau$, we apply the
same method as in 2). What changes
is the block diagonal matrix $\Gamma_0$ involved in Lemma \ref{lem:ext:soulier}.
In fact, to evaluate $\PE[J_{\varepsilon}(X_{i_1},X_{i_2})J(X_{i_3},X_{i_4})]$
we expand $J_{\varepsilon}$ in Hermite polynomials, so that we need to
control $\PE[H_p(X_{i_1})J(X_{i_3},X_{i_4})H_q(X_{i_2})]$.
We want to apply Lemma \ref{lem:ext:soulier} with
$Y_1=(X_{i_1},X_{i_3},X_{i_4})$, $Y_2=X_{i_2}$, $f_1=H_p J$, $f_2=H_q$.
We let  $\Gamma_{0,1}$ be a $3\times 3$ block diagonal matrix with a
first block corresponding to the covariance matrix of the vector
$(X_{i_1},X_{i_3})$ and a second block equal to 1, and we let
$\Gamma_{0,2}=1$, so that $\Gamma_0$ is a $4\times 4$ matrix.
Observe that $\|\Gamma_0^{-1}(\Gamma-\Gamma_0)\|_2
\leq 4\|\Gamma_0^{-1}(\Gamma-\Gamma_0)\|_{\infty}$, where
$\|\Gamma_0^{-1}(\Gamma-\Gamma_0)\|_{\infty}\leq
(\sup_{1\leq k\leq\tau}\rho(k))(1-\sup_{k>\tau}\rho(k))^{-1}\leq 1/13$,
by (\ref{eq:tau_soulier}).
Thus,  $\|\Gamma_0^{-1}(\Gamma-\Gamma_0)\|_2\leq 4/13<1/3-\eta$,
for some positive $\eta$.
Hence, the condition on $r^{\star}$ of Lemma \ref{lem:ext:soulier} is satisfied.
The remaining sum over indices where the
distances between any two indices are larger than $\tau$ can be addressed in the same way as
when Condition \eqref{eq:r_star} is satisfied.
\end{proof}

\subsection{Lemmas used in the proof of Theorem \ref{theo:D<1/2}} \label{subsec:theo2:lem}
\ \\
The following lemma proves joint convergence and provides the joint
cumulants of the limits $(Z_{2,D}(1),(Z_{1,D}(1))^2).$
\begin{lemma}\label{lem:cumulants}
Let $(X_j)_{j\geq 1}$ be a stationary process satisfying Assumption
(A\ref{assum:long-range}) with $D<1/2$ and let $a$ and $b$ be two
real constants. Then, as $n$ tends to infinity,
$$
k(D)\frac{n^{D-2}}{L(n)}\left\{a n \sum_{i=1}^n (X_i^2-1)+b\sum_{1\leq i,j\leq n} X_i
X_j\right\}\stackrel{d}{\longrightarrow}\left[a Z_{2,D}(1)+b (Z_{1,D}(1))^2\right]\; ,
$$
where $\stackrel{d}{\longrightarrow}$ denotes the convergence in distribution,
$k(D)=\emph{\B}((1-D)/2,D)$ where $\emph{\B}$ denotes the Beta function,
$Z_{1,D}(\cdot)$ and $Z_{2,D}(\cdot)$ are defined in (\ref{eq:fBm}) and
(\ref{eq:rosenblatt}) respectively. The cumulants of the limit process
are given in (\ref{eq:cumulants}).
\end{lemma}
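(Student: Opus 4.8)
The plan is to reduce the stated one-dimensional convergence to a two-dimensional joint limit, and then to analyze that joint limit by the method of cumulants. Writing $S_n=\sum_{i=1}^n X_i$ and using $H_2(x)=x^2-1$, observe that $\sum_{1\le i,j\le n}X_iX_j=S_n^2$, so the bracketed quantity equals $a\,n\sum_{i=1}^n H_2(X_i)+b\,S_n^2$. Setting
$$
A_n=k(D)\,\frac{n^{D-1}}{L(n)}\sum_{i=1}^n H_2(X_i),\qquad
B_n=k(D)^{1/2}\,\frac{n^{D/2-1}}{L(n)^{1/2}}\,S_n,
$$
one checks the identity $k(D)\,n^{D-2}L(n)^{-1}\{a\,n\sum_i H_2(X_i)+b\,S_n^2\}=a\,A_n+b\,B_n^2$. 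Hence, by the continuous mapping theorem applied to $(u,v)\mapsto au+bv^2$, it suffices to prove the joint convergence $(A_n,B_n)\cd(Z_{2,D}(1),Z_{1,D}(1))$.

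Both marginal limits are already available: $B_n\cd Z_{1,D}(1)$ is the central limit theorem for long-range dependent partial sums (Lemma 5.1 of \cite{taqqu:1975}, as used for the sample mean in Section \ref{sec:appli}), while $A_n\cd Z_{2,D}(1)$ is the non-central limit theorem for a Hermite-rank-$2$ function in the regime $D<1/2$, the constant $k(D)$ of (\ref{e:kD}) being exactly the factor that turns the limit into the \emph{unnormalized} Rosenblatt integral (\ref{eq:rosenblatt}), cf. \cite{fox:taqqu:1987}. The genuinely new content is therefore the \emph{joint} statement. Since $Z_{1,D}(1)$ and $Z_{2,D}(1)$ lie respectively in the first and second Wiener chaos, the pair is determined by its moments, so I plan to establish joint convergence by showing that every joint cumulant of $(A_n,B_n)$ converges to the corresponding joint cumulant of $(Z_{2,D}(1),Z_{1,D}(1))$.

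Concretely, by multilinearity the joint cumulant of $p$ copies of $A_n$ and $q$ copies of $B_n$ equals $k(D)^{p+q/2}$ times an explicit power of $n/L(n)$ times $\sum_{\mathbf i,\mathbf j}\mathrm{cum}(H_2(X_{i_1}),\dots,H_2(X_{i_p}),X_{j_1},\dots,X_{j_q})$. Each inner cumulant is then evaluated by the diagram formula for cumulants of Hermite polynomials (Lemma 3.2 of \cite{taqqu:1977}): it is a sum, over connected diagrams pairing the $2p+q$ legs (two per $H_2$-vertex, one per $X$-vertex), of products of covariances $\rho(i_a-i_b)$, and in particular it vanishes unless $q$ is even. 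The lowest-order surviving mixed cumulant, $\mathrm{cum}(A_n,B_n,B_n)$, is nonzero, reflecting the fact that the two limits are dependent although uncorrelated.

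The hard part will be the asymptotic evaluation of these diagram sums. Each connected diagram produces a multiple sum $\sum\prod_e\rho(\cdot)$ whose order of growth is governed, through $\rho(k)=k^{-D}L(k)$ and crucially the assumption $D<1/2$, by the standard long-range-dependence sum asymptotics (as in \cite{taqqu:1975,Dehling:Taqqu:1989}). I expect that precisely the extremal (cyclic) connected diagrams whose growth matches the normalizing power survive, their rescaled sums converging to Riemann-sum approximations of the kernel integrals defining the joint cumulants of $(Z_{2,D}(1),Z_{1,D}(1))$, while every other diagram is of strictly smaller order and is negligible. The limiting cumulants can be identified independently from the Wiener-It\^o representations $Z_{1,D}(1)=I_1(f)$ and $Z_{2,D}(1)=I_2(g)$, with $f,g$ the kernels of (\ref{eq:fBm})--(\ref{eq:rosenblatt}), which yields the closed forms (\ref{eq:cumulants}). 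Carrying out the power-counting uniformly over all orders $p,q$, keeping track of which diagrams are leading, and controlling the slowly varying factors $L$, is where the main technical effort lies; once all joint cumulants converge to those of the moment-determinate limit, the joint convergence and hence the lemma follow.
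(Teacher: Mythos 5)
Your plan is sound and, once executed, would prove the lemma; but it follows a genuinely different route from the paper. The paper never splits the statistic into the pair $(A_n,B_n)$: it treats the whole quantity (up to an additive constant, irrelevant for cumulants of order $\geq 2$) as a single quadratic form $X'(an\mathrm{I}+b\mathbf{1}\mathbf{1}')X$ in the Gaussian vector $X=(X_1,\dots,X_n)'$, and invokes the classical closed-form expression for cumulants of Gaussian quadratic forms (\cite{stuart:ord:1987}, p.~488), $\mathrm{cum}_p=2^{p-1}(p-1)!\,\mathrm{Tr}(B_n^p)$ with $B_n=n^{D-2}L(n)^{-1}\Sigma^{1/2}(an\mathrm{I}+b\mathbf{1}\mathbf{1}')\Sigma^{1/2}$; on the limit side it writes $aZ_{2,D}(1)+bZ_{1,D}(1)^2$ as a single off-diagonal double Wiener--It\^o integral plus $b\sigma^2$ and gets its cumulants as cyclic integrals of the kernel from Proposition 4.2 of \cite{fox:taqqu:1987}; the proof then reduces to Riemann-sum convergence of $\mathrm{Tr}(B_n^p)$ to that cyclic integral. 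Your route---joint convergence of $(A_n,B_n)$ plus continuous mapping, with joint cumulants handled by the diagram formula---buys a stronger and more modular conclusion (the joint law of the pair, from which any continuous functional converges), at the price of diagram bookkeeping that the trace formula packages automatically. Two remarks on that bookkeeping. First, the difficulty you anticipate (isolating leading diagrams and discarding negligible ones) does not actually arise: since you compute \emph{cumulants} rather than moments, only connected diagrams enter, and a connected graph all of whose vertices have degree at most $2$ (two legs per $H_2$-vertex, one per $X$-vertex) is necessarily a cycle or a path; hence every joint cumulant with three or more $B_n$-arguments vanishes identically, and in the surviving cases (cycles for pure $A_n$-cumulants, paths for exactly two $B_n$'s) \emph{all} diagrams contribute at the same leading order---there is nothing to discard. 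Second, the remaining step, convergence of the rescaled cyclic and path sums $\sum\prod\rho(\cdot)$ to the corresponding kernel integrals while controlling the slowly varying factor, is exactly the Riemann-sum argument on which the paper's trace computation also rests, so the two proofs share their technical core. A minor citation point: Lemma 3.2 of \cite{taqqu:1977} is the bound on expectations of products of Hermite polynomials used elsewhere in the paper, not the diagram formula for cumulants, though the latter is standard. Finally, your observation that the limit pair is moment-determinate because its components lie in the first two Wiener chaoses is correct and is genuinely needed for the cumulant method; the paper leaves the analogous one-dimensional determinacy implicit.
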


The proof of Lemma \ref{lem:cumulants} is in the supplemental article
\cite{supplement}.

\begin{remark}\label{r:moments}
It follows from Lemma \ref{lem:cumulants} that
$\PE[Z_{D,1}(1)^2]=\sigma^2$ where $\sigma^2$ is given in
(\ref{e:VarFBM}). Moreover, setting $a=1$, $b=0$ in
(\ref{eq:cumulants}), we get the expression (\ref{e:VarRos}) for
$\PE[Z_{D,2}(1)^2]$.
\end{remark}
\begin{lemma}\label{lem:partie_qui_compte}
Under Assumption (A\ref{assum:long-range}) there exists a positive constant $C$ such that, for $n$ large
enough,
\begin{equation}\label{case_(i)}
\frac{n^{D-2}}{L(n)}\PE[\{\sum_{i=1}^n X_i\}^2]
\leq C\; , \textrm{ when } D<1\; ,
\end{equation}
\begin{equation}\label{case_(ii)_a}
\frac{n^{2D-2}}{L(n)^2}\PE[\{\sum_{i=1}^n (X_i^2-1)\}^2]
\leq C \; , \textrm{ when } D<1/2\; ,
\end{equation}
and
\begin{equation}\label{case_(ii)_b}
\frac{n^{2D-4}}{L(n)^2}\PE[\{\sum_{1\leq i\neq j\leq n}X_i X_j\}^2]
\leq C \; , \textrm{ when } D<1/2\; .
\end{equation}
\end{lemma}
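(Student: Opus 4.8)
The plan is to reduce each of the three second moments to a sum of products of the covariances $\rho$ and then to control these sums by Karamata's theorem, keeping careful track of the slowly varying factor $L$. The two elementary estimates I would establish first are that, since $\rho(k)=k^{-D}L(k)$ is regularly varying, for $n$ large enough
\begin{equation*}
\sum_{|k|<n}|\rho(k)|\leq C\,n^{1-D}L(n)\quad(D<1),\qquad
\sum_{|k|<n}\rho(k)^2\leq C\,n^{1-2D}L(n)^2\quad(D<1/2),
\end{equation*}
the second using that $L^2$ is again slowly varying. The finitely many small $k$ for which $L$ need not be positive contribute only a bounded term, absorbed by the leading divergent factor $n^{1-D}L(n)\to\infty$.

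For \eqref{case_(i)} and \eqref{case_(ii)_a} I would use stationarity and, in the second case, the identity $X_i^2-1=H_2(X_i)$ together with the Hermite orthogonality relation $\PE[H_2(X_i)H_2(X_j)]=2\,\rho(i-j)^2$. This gives
\begin{equation*}
\PE\Big[\Big(\sum_{i=1}^n X_i\Big)^2\Big]=\sum_{|k|<n}(n-|k|)\rho(k)\leq n\sum_{|k|<n}|\rho(k)|\leq C\,n^{2-D}L(n)
\end{equation*}
and
\begin{equation*}
\PE\Big[\Big(\sum_{i=1}^n (X_i^2-1)\Big)^2\Big]=2\sum_{|k|<n}(n-|k|)\rho(k)^2\leq 2n\sum_{|k|<n}\rho(k)^2\leq C\,n^{2-2D}L(n)^2\; ,
\end{equation*}
and the stated bounds follow after multiplying by $n^{D-2}/L(n)$ and $n^{2D-2}/L(n)^2$ respectively.

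For \eqref{case_(ii)_b} I would expand the fourth moment by Isserlis' (Wick's) formula,
\begin{equation*}
\PE[X_iX_jX_kX_\ell]=\rho(i-j)\rho(k-\ell)+\rho(i-k)\rho(j-\ell)+\rho(i-\ell)\rho(j-k)\; ,
\end{equation*}
and sum over $i\neq j$, $k\neq\ell$. After dropping the diagonal restrictions, each of the three resulting quadruple sums is bounded by
\begin{equation*}
\Big(\sum_{1\leq a,b\leq n}|\rho(a-b)|\Big)^2\leq\Big(n\sum_{|k|<n}|\rho(k)|\Big)^2\leq C\,n^{4-2D}L(n)^2\; ,
\end{equation*}
so multiplying by $n^{2D-4}/L(n)^2$ gives the claim. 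Note that this argument uses only $D<1$ and hence a fortiori covers $D<1/2$.

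The step I expect to be the main obstacle is retaining the exact power of $L(n)$ demanded by the normalizations. A crude Potter-type bound such as $|\rho(k)|\leq C|k|^{-D+\delta}$, of the kind used elsewhere in the proof of Lemma \ref{eq:maj_Rn}, is \emph{not} sufficient here, since the spurious factor $n^{\delta}$ it produces would not be compensated by $L(n)^{-1}$. Thus a precise application of Karamata's theorem, equivalently a comparison of the covariance sums with integrals of regularly varying functions, is the essential technical ingredient, and I would establish the two displayed estimates carefully before carrying out the three reductions above.
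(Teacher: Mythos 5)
Your proposal is correct, and for the first two bounds it coincides with the paper's argument: the paper likewise reduces \eqref{case_(i)} to $n\sum_{|k|<n}|\rho(k)|$ by stationarity, reduces \eqref{case_(ii)_a} to $2n\sum_{|k|<n}\rho(k)^2$ via $X_i^2-1=H_2(X_i)$ and $\PE[H_2(X_i)H_2(X_j)]=2\rho(i-j)^2$, and then invokes exactly the Karamata-type asymptotics (the adaptation in \cite{taqqu:1975}, stated as \eqref{eq:taqqu_trick} in the paper) that you identify as the essential ingredient; your remark that a Potter bound $|\rho(k)|\leq C|k|^{-D+\delta}$ would destroy the $L(n)$ bookkeeping is precisely why that precise form is needed.

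Where you genuinely diverge is \eqref{case_(ii)_b}. The paper splits the quadruple sum according to the cardinality of $\{i,j,k,\ell\}$, writes $X_i^2=H_2(X_i)+1$ on the diagonal terms, and controls the off-diagonal moments $\PE(X_iX_jX_kX_\ell)$ and $\PE(X_i^2X_jX_\ell)$ by Lemma 3.2 of \cite{taqqu:1977} on products of Hermite polynomials, yielding three separate estimates that are then combined with \eqref{eq:taqqu_trick}. You instead apply Isserlis'/Wick's formula directly to $\PE[X_iX_jX_kX_\ell]$, drop the diagonal restrictions, and observe that each of the three pairing terms factors into $\bigl(\sum_{1\leq a,b\leq n}|\rho(a-b)|\bigr)^2\leq Cn^{4-2D}L(n)^2$ via Karamata with $m=1$. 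This is more elementary (no Hermite machinery or case analysis is needed for this bound), and it has the side benefit you note: it proves \eqref{case_(ii)_b} for all $D<1$, not merely $D<1/2$, since only the $m=1$ Karamata estimate enters. The paper's route is heavier but is the natural one given that the same Hermite-product bounds from \cite{taqqu:1977} are reused in the proofs of Lemmas \ref{eq:maj_Rn} and \ref{lem:reste}. Both arguments are valid; yours is, for this particular lemma, the shorter one.
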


The proof of Lemma \ref{lem:partie_qui_compte} is in the supplemental article \cite{supplement}.

\begin{lemma}\label{lem:reste}
Suppose that the assumptions of Theorem \ref{theo:D<1/2} hold, in
particular $D<1/m$ where $m=1$ or 2. Then, $\widetilde{R}_n$ defined in
(\ref{eq:herm_decomp_Un}) satisfies the following property. There exist positive
constants $\alpha$ and $C$ such that,
for $n$ large enough,
\begin{equation}\label{e:Rtilde}
a_n^2\; \PE[(\widetilde{R}_{n}(t)-\widetilde{R}_{n}(s))^2]\leq C \frac{|t-s|}{n^{\alpha}}\;,\textrm{
  for all } s,t\in I\; ,
\end{equation}
where $I$ is any compact interval of $\rset$ and $a_n=n^{mD/2-2}L(n)^{-m/2}$.
\end{lemma}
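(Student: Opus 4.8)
The plan is to mirror the structure of the proof of Lemma~\ref{eq:maj_Rn}, replacing the Hoeffding remainder by the higher-order Hermite remainder and adapting the covariance counting to the regime $D<1/m$. Taking $s\le t$ and dropping subscripts, I would write
$$
\widetilde R_n(t)-\widetilde R_n(s)=\sum_{1\le i\ne j\le n}\widetilde J(X_i,X_j),\qquad
\widetilde J(x,y)=\sum_{\stackrel{p,q\ge 0}{p+q>m}}\frac{c_{p,q}}{p!\,q!}H_p(x)H_q(y),
$$
where $c_{p,q}=\alpha_{p,q}(t)-\alpha_{p,q}(s)$, so that $\widetilde J$ is the projection of $h(\cdot,\cdot,t)-h(\cdot,\cdot,s)$ onto Hermite products of total degree $>m$; hence $\widetilde J$ is bounded and has Hermite rank at least $m+1$. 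First I would record three coefficient estimates. Since $h(\cdot,\cdot,t)-h(\cdot,\cdot,s)$ is $\{0,\pm1\}$-valued, $\PE[(h(X,Y,t)-h(X,Y,s))^2]=|U(t)-U(s)|\le C|t-s|$ by Condition~(ii), giving the \emph{aggregate} bound $\sum_{p+q>m}c_{p,q}^2/(p!q!)\le C|t-s|$. For the finitely many \emph{low} modes $p+q\le m$, Condition~(iii) is exactly what is needed, since the weights $|X|,|XY|,|X^2-1|$ in $\widetilde\Lambda$ dominate $|H_1|,|H_1H_1|,|H_2|$ (using the symmetry of $G$ for the $Y$-modes), so $|c_{p,q}|\le C|t-s|$ for $p+q\le m$. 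Writing $\widetilde J=(h(\cdot,t)-h(\cdot,s))-L$ with $L$ the low-degree part, these facts together with Condition~(i) yield, for every covariance $\Gamma$ with entries bounded away from $\pm1$, the key $L^2$ bound $\PE_\Gamma[\widetilde J^2]\le C|t-s|$: indeed $\PE_\Gamma[(h(\cdot,t)-h(\cdot,s))^2]=\PE_\Gamma[|h(\cdot,t)-h(\cdot,s)|]\le C|t-s|$ by~(i), while $\PE_\Gamma[L^2]\le C|t-s|^2$ because $L$ is a fixed low-degree polynomial with coefficients $O(|t-s|)$.

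Next I would expand
$$
\PE[(\widetilde R_n(t)-\widetilde R_n(s))^2]=\sum_{\stackrel{1\le i\ne j\le n}{1\le k\ne l\le n}}\PE[\widetilde J(X_i,X_j)\widetilde J(X_k,X_l)]
$$
and split according to $|\{i,j,k,l\}|\in\{2,3,4\}$. The cardinalities $2$ and $3$ are elementary: by Cauchy--Schwarz each term is at most $\PE_\Gamma[\widetilde J^2]\le C|t-s|$, and there are $O(n^2)$, respectively $O(n^3)$, such terms, so their total contribution is $\le C|t-s|\,n^3$. As $D<1/m$ with $m\le 2$ forces $3<4-mD$, multiplication by $a_n^2=n^{mD-4}L(n)^{-m}$ renders these contributions $O(|t-s|\,n^{-\alpha})$ for some $\alpha>0$, consistent with~\eqref{e:Rtilde}.

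The cardinality-$4$ sum is the crux, and I would treat it as in case~3) of Lemma~\ref{eq:maj_Rn}. Assuming first the small-correlation condition~\eqref{eq:r_star}, I would apply Lemma~\ref{l:bar-gamma} to pass to $(\bar X_i,\bar X_j,\bar X_k,\bar X_l)$ with vanishing within-pair correlations and cross correlations bounded by $\bar\rho^\star<1/3$, expand $\widetilde J$ over each (now independent) pair, and evaluate $\PE_{\bar\Gamma}[H_{p_1}H_{p_2}H_{p_3}H_{p_4}]$ by the diagram formula (Lemma~3.2 of~\cite{taqqu:1977}) together with the hypercontractive bound~\eqref{eq:taqqu_bound}. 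Because the within-pair covariances are zero, every surviving diagram consists solely of cross edges; the essential new point compared with Lemma~\ref{eq:maj_Rn} is that the Hermite rank $m+1$ of $\widetilde J$ forces each diagram to carry at least $m+1$ cross-covariance factors. I would therefore factor out $m+1$ of them (rather than $2$), bounding the rest by $\bar\rho^\star$ so that the Hermite-index sums converge by Cauchy--Schwarz as in~\eqref{e:Abd}; combined with $\PE_\Gamma[\widetilde J^2]\le C|t-s|$ this gives $|\PE[\widetilde J(X_i,X_j)\widetilde J(X_k,X_l)]|\le C|t-s|\,\bar A_{m+1}(i,j,k,l)$, where $\bar A_{m+1}$ is a sum of products of $m+1$ cross-covariances among the four indices.

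It then remains to estimate $\sum_{|\{i,j,k,l\}|=4}\bar A_{m+1}$ with $|\rho(k)|\le C(1\wedge|k|^{-D+\varepsilon})$, and this covariance counting is the main obstacle: one must verify that, over four free indices subject to $m+1$ covariance constraints, the sum stays strictly below $n^{4-mD}$ even in degenerate configurations where repeated edges leave an index isolated. A case analysis of the bipartite multigraphs on $\{i,j\}$ versus $\{k,l\}$ shows the worst exponent is $\max(4-(m+1)D,\,3)$ up to $n^{\varepsilon'}$, and both $4-(m+1)D<4-mD$ and $3<4-mD$ hold precisely because $D<1/m$; this is where the hypothesis is used sharply. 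Multiplying by $a_n^2=n^{mD-4}L(n)^{-m}$ and absorbing $L(n)^{-m}$ into an arbitrarily small $\varepsilon'$, the cardinality-$4$ contribution is $O(|t-s|\,n^{-\alpha})$ for any $\alpha<\min(D,1-mD)$. Finally, when~\eqref{eq:r_star} fails I would split off the finitely many close index pairs exactly as at the end of the proof of Lemma~\ref{eq:maj_Rn}, using the truncation~\eqref{eq:tau_soulier} and the block-diagonal form of Lemma~\ref{lem:ext:soulier}, which only improves the rate. Combining the three cardinality ranges yields~\eqref{e:Rtilde}.
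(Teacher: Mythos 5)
Your proposal follows the paper's proof skeleton very closely: the same kernel $\widetilde J_{s,t}$ (the degree-$>m$ part of $h(\cdot,\cdot,t)-h(\cdot,\cdot,s)$, written out explicitly in (\ref{e:Jm=1})--(\ref{e:Jm=2})), the same key estimate $\PE_{\Gamma}[\widetilde J_{s,t}^2]\le C|t-s|$ from Condition (\ref{eq:lip_h}) plus the Lipschitz property of $U$ and $\widetilde\Lambda$, the same split according to the cardinality of $\{i_1,i_2,i_3,i_4\}$ with the same $O(n^2)$ and $O(n^3)$ counts, and, for cardinality $4$, the same machinery (Lemma \ref{l:bar-gamma}, Hermite expansion, Taqqu's diagram formula and the hypercontractive bound (\ref{eq:taqqu_bound})). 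One factual slip: $\widetilde J_{s,t}$ is \emph{not} bounded (it contains the polynomial correction), contrary to your opening claim; this is harmless since all your estimates go through the $L^2$ bound rather than through the sup-norm argument (\ref{e:J2}) used for the Hoeffding remainder. Where you genuinely diverge from the paper is the final covariance counting: the paper bounds \emph{every} entry $\bar\rho_{ij}$ by $\bar\gamma(k)$, $k$ the minimal gap among the four indices, extracts only a factor $\bar\gamma(k)^2$ via geometric series, and counts $n^3\sum_{|k|<n}\bar\gamma(k)^2$ --- and it presents this only for $m=1$, asserting $m=2$ is similar. Your count --- extract $m+1$ factors, then a graph analysis giving worst exponent $\max(4-(m+1)D,3)$ --- generalizes instead case 3) of Lemma \ref{eq:maj_Rn} (the $\bar A$ of (\ref{e:Abar})), and it is in fact what $m=2$ \emph{requires}: with only two extracted factors one gets $a_n^2 n^3\sum_k\bar\gamma(k)^2\asymp n^{2\varepsilon}L(n)^{-2}$ when $m=2$, which does not decay, so your rank-based count is the more robust route and handles both values of $m$ uniformly.

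There is, however, one step that does not hold as you state it, and it carries the weight of your argument: after passing to $(\bar X_{i_1},\dots,\bar X_{i_4})$ you assert that every surviving diagram consists solely of cross edges and hence carries $m+1$ cross-covariance factors, because $\widetilde J$ has Hermite rank $m+1$. But Hermite rank is defined with respect to the \emph{product} Gaussian measure, while Lemma \ref{l:bar-gamma} decorrelates a \emph{dependent} pair; the transformed kernel $\widetilde J\circ L_a$ need not be orthogonal to low-degree polynomials in $(\bar X_{i_1},\bar X_{i_2})$. For instance, for $m=1$ the component $H_1(x)H_1(y)$ of $\widetilde J$ gives $\PE_{\bar{\Gamma}}[\widetilde J\circ L_a]=\PE_{\Gamma_{11}}[\widetilde J(X_{i_1},X_{i_2})]\neq 0$. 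By Mehler's formula these low-degree coefficients are of size $O(|\rho(i_1-i_2)|^{\lceil (m+1-p_1-p_2)/2\rceil})$, so each dropped term does carry compensating \emph{within-pair} covariance factors, and redoing your multigraph count with these non-bipartite edges included still yields $\max(4-(m+1)D,3)$; but the purely bipartite case analysis you describe misses exactly those configurations. (In fairness, the paper makes an analogous leap by restricting the sums in (\ref{e:bd1}) to $p_i\ge1$; its min-gap bound is indifferent to which pair an extracted factor comes from, which is why its $m=1$ conclusion survives the same correction.) So: right architecture, right exponents, but the justification of the $m+1$-factor count has a real, though repairable, hole.
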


The proof of Lemma \ref{lem:reste} is in the supplemental article \cite{supplement}.

\begin{lemma}\label{lem:reste_D<1/2}
Under the assumptions of Theorem \ref{theo:D<1/2}, $\widetilde{R}_n$
defined in (\ref{eq:herm_decomp_Un}) satisfies, as $n$ tends to infinity,
$$
\sup_{r\in I} a_n |\widetilde{R}_n(r)|=o_P(1)\; ,
$$
where $a_n=n^{-2+mD/2}L(n)^{-m/2}$,  and $m$ is
the Hermite rank of the class of functions $\{h(\cdot,\cdot,r)-U(r)\; ,
r\in I\}$ which is equal to 1 or 2.
\end{lemma}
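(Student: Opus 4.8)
The plan is to mirror the proof of Lemma~\ref{lem:reste1} and obtain the uniform negligibility through a discretization argument that combines the increment bound of Lemma~\ref{lem:reste} with the monotonicity of $U_n$ and $U$. I would write $a_n\widetilde{R}_n(r)=M_n(r)-\widetilde{V}_n(r)$, where $M_n(r)=a_n n(n-1)(U_n(r)-U(r))$ and $\widetilde{V}_n(r)=a_n\widetilde{W}_n(r)$; here $\widetilde{W}_n$ collects the Hermite terms of order exactly $m$ and $\widetilde{R}_n$ those of order $>m$, and $a_n n(n-1)\sim n^{mD/2}L(n)^{-m/2}$. Choosing a grid $r_0<r_1<\dots<r_N$ of $I$ with mesh $\delta=\delta_n$ (so $N\sim|I|/\delta_n$), I would split
\[
\sup_{r\in I}|a_n\widetilde{R}_n(r)|\;\leq\;\max_{0\leq k\leq N}|a_n\widetilde{R}_n(r_k)|\;+\;\max_{0\leq k<N}\ \sup_{r\in[r_k,r_{k+1}]}\bigl|a_n\widetilde{R}_n(r)-a_n\widetilde{R}_n(r_k)\bigr|\;,
\]
and treat the grid maximum and the within-cell oscillation separately.

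For the grid maximum I would use a union bound and Chebyshev's inequality, which demands a \emph{quantitative} pointwise bound $\sup_{r\in I}a_n^2\PE[\widetilde{R}_n(r)^2]\leq C n^{-\alpha'}$ for some $\alpha'>0$. Decomposing $\widetilde{R}_n(r)=\widetilde{R}_n(r_0)+(\widetilde{R}_n(r)-\widetilde{R}_n(r_0))$, the increment is controlled by Lemma~\ref{lem:reste}, while $a_n^2\PE[\widetilde{R}_n(r_0)^2]$ is estimated directly: $\widetilde{R}_n(r_0)$ contains only Hermite terms of order strictly greater than $m$, whose normalized variances are of strictly smaller polynomial order than those of the order-$m$ terms (this is the quantitative form of the pointwise convergence underlying Theorem~\ref{theo:D<1/2}, obtained from variance estimates of the type in Lemma~\ref{lem:partie_qui_compte} pushed to Hermite ranks $>m$). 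Then $\PP(\max_k|a_n\widetilde{R}_n(r_k)|>\eta)\leq (N+1)\eta^{-2}\sup_r a_n^2\PE[\widetilde{R}_n(r)^2]\leq C\eta^{-2}n^{\gamma-\alpha'}$ when $\delta_n=n^{-\gamma}$, and this tends to $0$ as soon as $\gamma<\alpha'$.

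For the within-cell oscillation I would exploit monotonicity. Since $a_nn(n-1)U_n$ and $a_nn(n-1)U$ are non-decreasing and $U$ is Lipschitz (assumption (ii) of Theorem~\ref{theo:D<1/2}), sandwiching $M_n(r)$ between its values at $r_k$ and $r_{k+1}$ yields $|M_n(r)-M_n(r_k)|\leq|M_n(r_{k+1})-M_n(r_k)|+C\,a_nn(n-1)\delta_n$. The first term reduces, through $M_n=a_n\widetilde{R}_n+\widetilde{V}_n$, to grid values of $a_n\widetilde{R}_n$ (already controlled) plus an oscillation of $\widetilde{V}_n$. The $r$-dependence of $\widetilde{V}_n$ enters only through the coefficients $\alpha_{p,q}(\cdot)$ with $p+q=m$, each of which is Lipschitz --- this is exactly what the Lipschitz continuity of $\widetilde{\Lambda}$ (assumption (iii)) and \eqref{eq:lip_h} guarantee. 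Hence $\sup_{r\in[r_k,r_{k+1}]}|\widetilde{V}_n(r)-\widetilde{V}_n(r_k)|\leq C\delta_n\cdot a_n\sum_{p+q=m}|\sum_{i\neq j}H_p(X_i)H_q(X_j)|=O_P(\delta_n)$, because $a_n$ is tuned so that the order-$m$ sums are $O_P(1)$. Altogether the within-cell oscillation is at most $2\max_k|a_n\widetilde{R}_n(r_k)|+O_P(\delta_n)+C\,a_nn(n-1)\delta_n$.

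It then remains to choose the mesh. Since $a_nn(n-1)\delta_n\sim n^{mD/2}L(n)^{-m/2}\delta_n$, I would take $\delta_n=n^{-\gamma}$ with $mD/2<\gamma<\alpha'$, which forces the monotone-part oscillation to vanish while keeping the grid sparse enough for the union bound; all three contributions are then $o_P(1)$. The main obstacle is precisely this balancing: it is feasible only because $\alpha'>mD/2$, which rests on the fact that $\widetilde{R}_n$ involves solely Hermite terms of order $>m$, whose normalized variances decay strictly faster than the $n^{-mD}L(n)^{m}$ scale of the leading order-$m$ terms. Establishing this sharp pointwise rate uniformly in $r\in I$ is the delicate point; once it is in hand, the monotonicity and Lipschitz bookkeeping for the oscillation is routine.
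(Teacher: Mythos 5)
Your skeleton (monotonicity sandwich over a grid, Lipschitz control of the order-$m$ coefficients through $\widetilde{\Lambda}$, plus an increment bound for $\widetilde{R}_n$) mirrors the paper's proof, which establishes the sandwich inequalities (\ref{eq:decomp_Rn_tilde_1})--(\ref{eq:decomp_Rn_tilde_2}) and then invokes Lemma 5.2 of \cite{borovkova:burton:dehling:2001}, verifying its condition (i) with Lemma \ref{lem:reste} and its condition (ii) with Lemma \ref{lem:partie_qui_compte} and the Lipschitz property of $\widetilde{\Lambda}$. The gap is in how you handle the grid maximum. You bound $\PP(\max_k|a_n\widetilde{R}_n(r_k)|>\eta)$ by $(N+1)\eta^{-2}\sup_r a_n^2\PE[\widetilde{R}_n(r)^2]$, which forces the two-sided mesh condition $mD/2<\gamma<\alpha'$, and you assert this window is nonempty because $\alpha'>mD/2$. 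That inequality is false when $m=2$. Indeed, $\widetilde{R}_n(r)$ contains the Hermite-rank-$3$ part $\alpha_{1,2}(r)\sum_{i\neq j}X_iH_2(X_j)+\frac{\alpha_{3,0}(r)}{3}(n-1)\sum_i H_3(X_i)$, and the variance of $\sum_{i\neq j}X_iH_2(X_j)$ is of exact order $n^{4-3D}L(n)^3$: the diagram $\rho(i-k)\rho(j-l)^2$ alone contributes $\bigl(\sum_{i,k}\rho(i-k)\bigr)\bigl(\sum_{j,l}\rho(j-l)^2\bigr)\asymp n^{2-D}L(n)\cdot n^{2-2D}L(n)^2$, with no cancellation since $\rho$ is eventually positive. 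Multiplying by $a_n^2=n^{2D-4}L(n)^{-2}$ gives $n^{-D}L(n)$, so the best possible $\alpha'$ equals $D=mD/2$ up to slowly varying factors; nothing in the hypotheses of Theorem \ref{theo:D<1/2} makes $\alpha_{1,2}$ or $\alpha_{3,0}$ vanish (they happen to vanish for $G(x,y)=|x-y|$ by symmetry, but the lemma is general). One checks easily that no choice of $\delta_n$, even with slowly varying corrections, can make both the drift term $a_n n(n-1)\delta_n$ and the union bound $\delta_n^{-1}\sup_r a_n^2\PE[\widetilde{R}_n(r)^2]$ tend to zero. So the proposal genuinely fails for $m=2$ (and the needed uniform pointwise bound is in any case not a consequence of Lemma \ref{lem:partie_qui_compte}, which only treats ranks $1$ and $2$; it would require the full Hermite-tail machinery of Lemmas \ref{lem:ext:soulier} and \ref{l:bar-gamma} used in the proof of Lemma \ref{lem:reste}).

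The reason the paper's route survives is that the cited chaining lemma never pays a factor $N$ for the grid: its hypotheses are exactly the increment bound $a_n^2\PE[(\widetilde{R}_n(t)-\widetilde{R}_n(s))^2]\leq C|t-s|n^{-\alpha}$ of Lemma \ref{lem:reste} and a moment bound on the auxiliary process, and because the increment bound is \emph{linear} in $|t-s|$, a union (or dyadic chaining) bound over consecutive increments telescopes, $\sum_k a_n^2\PE[(\widetilde{R}_n(r_{k+1})-\widetilde{R}_n(r_k))^2]\leq C|I|n^{-\alpha}$, uniformly in the mesh. Hence the grid can be made as fine as needed to kill the drift term $a_n n(n-1)\delta_n$, with no competing upper constraint on $\delta_n^{-1}$. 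To repair your argument you would have to replace the pointwise Chebyshev union bound by such an increment-based chaining; at that point you would essentially be reproving Lemma 5.2 of \cite{borovkova:burton:dehling:2001}, which is what the paper uses as a black box.
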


The proof of Lemma \ref{lem:reste_D<1/2} is in the supplemental article
\cite{supplement}.

\subsection{Proof of  Theorem \ref{theo:D<1/2}}\label{subsec:theo2:proof}
Consider the decomposition (\ref{eq:herm_decomp_Un}).
Since $G(x,y)=G(y,x)$, one has
$\alpha_{1,0}(r)=\alpha_{0,1}(r)$, $\alpha_{2,0}(r)=\alpha_{0,2}(r)$ and
$\widetilde{W}_n$ defined in (\ref{def:W_n}) satisfies
\begin{equation}\label{eq:W_n:m=1}
\widetilde{W}_n(r)=2(n-1)\alpha_{1,0}(r)\sum_{i=1}^n X_i\;,\textrm{ if } m=1\; ,
\end{equation}
\begin{equation}\label{eq:W_n:m=2}
\widetilde{W}_n(r)=\alpha_{1,1}(r)\sum_{1\leq i\neq j\leq n} X_i
X_j+(n-1)\alpha_{2,0}(r)\sum_{i=1}^n (X_i^2-1)\; ,\textrm{ if } m=2\; .
\end{equation}
If $m=1$, using Lemma 5.1 in \cite{taqqu:1975}, if $r$ is fixed, $n^{D/2-2} L(n)^{-1/2}\widetilde{W}_n(r)$
defined in (\ref{eq:W_n:m=1}) converges in distribution to
$2k(D)^{-1/2}\alpha_{1,0}(r) Z_{1,D}(1)$.
Then, by the Cramer-Wold device, if $r_1,\dots,r_k$ are fixed real
numbers,\\ $k(D)^{1/2}n^{D/2-2} L(n)^{-1/2} (\widetilde{W}_n(r_1),\dots,\widetilde{W}_n(r_k))$ converges in
distribution to \\$(2\alpha_{1,0}(r_1)Z_{1,D}(1),\dots,
2\alpha_{1,0}(r_k)Z_{1,D}(1)).$
In the same way, if $m=2$, using Lemma \ref{lem:cumulants} in Section
\ref{subsec:theo2:lem} and
the Cramer-Wold device, \\$k(D)n^{D-2}L(n)^{-1}(\widetilde{W}_n(r_1),\dots,\widetilde{W}_n(r_k))$
converges in distribution to\\
$(\alpha_{1,1}(r_1)(Z_{1,D}(1))^2+
\alpha_{2,0}(r_1)Z_{2,D}(1),\dots,\alpha_{1,1}(r_k)(Z_{1,D}(1))^2+\alpha_{2,0}(r_k)Z_{2,D}(1))$.
We now show that  $\{n^{mD/2-2}L(n)^{-m/2} \widetilde{W}_n(r);  r\in
I\}$ is tight in $\mathcal{D}(I)$.
We shall do it in the case $m=1$. By (\ref{e:WL}),
Lemma \ref{lem:partie_qui_compte} in Section
\ref{subsec:theo2:lem} and the fact that $\widetilde{\Lambda}$ is a bounded
Lipschitz function, we get that there exists a positive constant $C$
such that for all $r_1<r_2$ in $I$,
\begin{multline*}
(n^{D/2-2}L(n)^{-1/2})^2\PE[\{\widetilde{W}_n(r_2)-\widetilde{W}_n(r_1)\}^2]
\leq C (\widetilde{\Lambda}(r_2)-\widetilde{\Lambda}(r_1))^2\leq C
|r_2-r_1|^2\; .
\end{multline*}
Using the Cauchy-Schwarz inequality, we obtain that for all $r_1$,
$r_2$, $r_3$ in $I$, such that $r_1<r_2<r_3$,
\begin{multline*}
(n^{D/2-2}L(n)^{-1/2})^2\PE\left[\left|\widetilde{W}_n(r_2)-\widetilde{W}_n(r_1)\right|\;
\left|\widetilde{W}_n(r_3)-\widetilde{W}_n(r_2)\right|\right]\\
\leq C |r_2-r_1| |r_3-r_2|\leq C |r_3-r_1|^2\; .
\end{multline*}
The tightness then follows from Theorem 15.6 of
\cite{billingsley:1968}.
A similar argument holds for $m=2$.
Thus, $\{n^{mD/2-2}L(n)^{-m/2}
\widetilde{W}_n(r);  r\in I\}$ converges
weakly to
$
\{2\alpha_{1,0}(r)k(D)^{-1/2}Z_{1,D}(1); r\in I\},
$
if $m=1$ and to
$
\{k(D)^{-1}\left[\alpha_{1,1}(r)Z_{1,D}(1)^2+\alpha_{2,0}(r)Z_{2,D}(1)\right]; r\in I\},
$
if $m=2$.
To complete the proof of Theorem \ref{theo:D<1/2} use
(\ref{eq:herm_decomp_Un}) and Lemma
\ref{lem:reste_D<1/2} in Section \ref{subsec:theo2:lem}, which ensures that
$\sup_{r\in I}n^{mD/2-2}L(n)^{-m/2} |\widetilde{R}_n(r)|=o_P(1)$, as $n$
tends to infinity.

\begin{supplement}[id=suppA]
  \stitle{Proofs of Lemmas \ref{lem:O_P(1)}, \ref{lem:reste1}, 
    \ref{lem:controle_hermite}, \ref{lem:ext:soulier}, \ref{l:bar-gamma}, 
    \ref{lem:cumulants}, \ref{lem:partie_qui_compte}, \ref{lem:reste} and 
\ref{lem:reste_D<1/2} and some numerical experiments.}
  \slink[url]{http://lib.stat.cmu.edu/aoas/???/???}
  \sdescription{This supplement contains proofs of Lemmas
\ref{lem:O_P(1)}, \ref{lem:reste1},
    \ref{lem:controle_hermite}, \ref{lem:ext:soulier}, \ref{l:bar-gamma},  
    \ref{lem:cumulants}, \ref{lem:partie_qui_compte}, \ref{lem:reste} and 
\ref{lem:reste_D<1/2} and a section containing numerical experiments
illustrating some results of Section \ref{sec:appli}.}
\end{supplement}

\bibliography{robust_cov_biblio}

\begin{center}
\textbf{Supplement to paper ``Asymptotic properties of U-processes under
long-range dependence''}
\end{center}

This supplement contains proofs of Lemmas \ref{lem:O_P(1)}, \ref{lem:reste1},
    \ref{lem:controle_hermite}, \ref{lem:ext:soulier}, \ref{l:bar-gamma}, 
\ref{lem:cumulants}, \ref{lem:partie_qui_compte}, \ref{lem:reste} and 
\ref{lem:reste_D<1/2} and a section containing numerical experiments
illustrating some results of Section \ref{sec:appli}.

\begin{proof}[Proof of Lemma \ref{lem:O_P(1)}]
Let us check that the assumptions of Theorem  9 in \cite{Arcones:1994}
hold for the class $\mathcal{F}$ of functions $\{h_1(\cdot,r): r\in
I\}$ which is of rank $\tau\geq m=2> 1/D$.
By Assumption (A\ref{assum:long-range}) and since $\tau> 1/D$, the condition (i'') of Theorem 9 in \cite{Arcones:1994}
is satisfied. We conclude the proof of the Lemma by observing
that the condition (ii) of this Theorem is also fulfilled.
To check this condition, we have to prove that
$$
\int_0^{\infty} (N_{[~]}^{(2)}(\varepsilon,\mathcal{F}))^{1/2} \rmd\varepsilon <\infty\; ,
$$
where $N_{[~]}^{(2)}(\varepsilon,\mathcal{F})$ is the bracketing number of the class
$\mathcal{F}$ as defined on page 2269 in \cite{Arcones:1994}:
\begin{multline*}
N_{[~]}^{(2)}(\varepsilon,\mathcal{F})=\min\{N:\exists \textrm{
  measurable functions } f_1,\dots,f_N \textrm{ and }
\Delta_1,\dots,\Delta_N\\
\textrm{ such that } 
\textrm{ for each }f\in\mathcal{F},\exists i\leq
N\textrm{ such that } |f_i-f|\leq\Delta_i \textrm{ and where }\\ 
\PE(\Delta_i^2(X))\leq\varepsilon^2 \textrm{ for
  each } i\leq N \}\; .
\end{multline*}
Let $\{r_i, i=0,\dots,N\}$ be such that: for all $i$,
$|r_i-r_{i-1}|\leq \varepsilon/C$ and for all $r\in I$,
there exists $i$ such that $|r-r_i|\leq \varepsilon$. The smallest $N$
satisfying this property is at most equal to $[|I|C/\varepsilon]+1$, where $|I|$ denotes the length of $I$.
Let us define for all $i\geq 1$, $f_i=h_1(\cdot,r_i)$ and
$\Delta_i=h_1(\cdot,r_i)-h_1(\cdot,r_{i-1})$.
Using (\ref{assum:h_1_Lipschitz}) we first get
$\PE(\Delta_i^2(X))=\PE(\{h_1(X,r_i)-h_1(X,r_{i-1})\}^2)\leq
C^2|r_i-r_{i-1}|^{2}\leq\varepsilon^2$.
Now, let $r\in I$ and $1\leq i\leq N$ be such that,
$r_{i-1}\leq r\leq r_i$. Then, using the fact that $h_1$ is increasing with respect to its second argument leads to
$|f_i-h_1(\cdot,r)|\leq\Delta_i$. Thus,
$N_{[~]}^{(2)}(\varepsilon,\mathcal{F})\leq [|I|C/\varepsilon]+1$
which yields condition (ii) of Theorem 9 in \cite{Arcones:1994}.
\end{proof}

\begin{proof}[Proof of Lemma \ref{lem:reste1}]
We want to apply Lemma 5.2, P. 4307 of\\
\cite{borovkova:burton:dehling:2001} to $\{\sqrt{n} R_n(r),r\in
I\}$. To do this, we prove that for all
$s,t\in I,\delta>0$ such that $s\leq t\leq s+\delta$ and $s+\delta\in I$:
\begin{multline}\label{eq:decomp_Rn}
\sqrt{n}|R_n(t)-R_n(s)|\\
\leq\sqrt{n}|R_n(s+\delta)-R_n(s)|+2\sqrt{n}|W_n(s+\delta)-W_n(s)|
+4\sqrt{n}|U(s+\delta)-U(s)|\; .
\end{multline}
Using the definition of $R_n$ given by (\ref{eq:R_n_hoeff}) and the fact that
$h$, $h_1$ and $U$ are nondecreasing functions with respect to $r$, we get:
\begin{multline*}
R_n(t)-R_n(s)\leq\frac{1}{n(n-1)}\sum_{1\leq i\neq j\leq n}\{h(X_i,X_j,t)-h(X_i,X_j,s)\}
+\{U(t)-U(s)\}\\
\leq\frac{1}{n(n-1)}\sum_{1\leq i\neq j\leq n}\{h(X_i,X_j,s+\delta)-h(X_i,X_j,s)\}
+\{U(s+\delta)-U(s)\}\; .
\end{multline*}
By adding and subtracting functions $h_1$ evaluated at $s+\delta$ and $s$, we
obtain:
$$
R_n(t)-R_n(s)\leq \{R_n(s+\delta)-R_n(s)\}+\frac{2}{n}\sum_{i=1}^n\{h_1(X_i,s+\delta)-h_1(X_i,s)\}\; .
$$
Adding and subtracting $2(U(s)-U(s+\delta))$ leads to:
$$
R_n(t)-R_n(s)\leq \{R_n(s+\delta)-R_n(s)\}+\{W_n(s+\delta)-W_n(s)\}
+2\{U(s+\delta)-U(s)\}\; ,
$$
where $W_n$ is defined in (\ref{eq:W_n_hoeff}). Thus,
\begin{equation}\label{e:R1}
R_n(t)-R_n(s)\leq |R_n(s+\delta)-R_n(s)|+|W_n(s+\delta)-W_n(s)|
+2|U(s+\delta)-U(s)|\; .
\end{equation}
Let us now find an upper bound for $R_n(s)-R_n(t)$.
Starting with the expression (\ref{eq:R_n_hoeff}) for $R_n(r)$ and
setting $h(X_i,X_j,s)\leq h(X_i,X_j,s+\delta)$, $U(s)\leq U(s+\delta)$
and $h(X_i,X_j,t)\geq h(X_i,X_j,s)$, $U(t)\geq U(s)$ since $h$ and $U$
are non decreasing functions with respect to $r$, we obtain
\begin{multline*}
R_n(s)-R_n(t)\leq\frac{1}{n(n-1)}\sum_{1\leq i\neq j\leq n}
[\{h(X_i,X_j,s+\delta)-h(X_i,X_j,s)\}\\-2\{h_1(X_i,s)-h_1(X_i,t)\}]
+\{U(s+\delta)-U(s)\}\\
\leq\frac{1}{n(n-1)}\sum_{1\leq i\neq j\leq n}[
\{h(X_i,X_j,s+\delta)-h(X_i,X_j,s)\}\\
-2\{h_1(X_i,s+\delta)-h_1(X_i,s)\}
]\\
+\frac{2}{n(n-1)}\sum_{1\leq i \neq j\leq n}[
\{h_1(X_i,t)-h_1(X_i,s)\}
+\{h_1(X_i,s+\delta)-h_1(X_i,s)\}
]\\
+\{U(s+\delta)-U(s)\}\\
\leq \{R_n(s+\delta)-R_n(s)\}+\frac{4}{n}
\sum_{1\leq i\leq n}\{h_1(X_i,s+\delta)-h_1(X_i,s)\}\; .
\end{multline*}
Adding and subtracting $4(U(s)-U(s+\delta))$ leads to:
\begin{multline}\label{e:R2}
R_n(s)-R_n(t)\leq |R_n(s+\delta)-R_n(s)|+2|W_n(s+\delta)-W_n(s)|
+4|U(s+\delta)-U(s)|\; .
\end{multline}
Combining (\ref{e:R1}) and (\ref{e:R2}), we get for all $s,t\in
I,\delta>0$ such that $s\leq t\leq s+\delta$ and $s+\delta\in I$:
\begin{multline*}
\sqrt{n} |R_n(t)-R_n(s)|\leq \sqrt{n}|R_n(s+\delta)-R_n(s)|+2\sqrt{n}|W_n(s+\delta)-W_n(s)|\\
+4\sqrt{n}|U(s+\delta)-U(s)|\; ,
\end{multline*}
which is \eqref{eq:decomp_Rn}. Remark that $U$ is Lipschitz by
Condition (\ref{assum:h_1_Lipschitz}).
In Lemma 5.2, P. 4307 of
\cite{borovkova:burton:dehling:2001}, the monotone Lipschitz-continuous function
$\Lambda$ is here $U$, $\alpha=1/2$ and the process $\{Y_n(t)\}$ is here $\{\sqrt{n}W_n(t)\}$.
We shall now verify that conditions (i) and (ii) of that lemma are
satisfied. Condition (i) holds because of Lemma \ref{eq:maj_Rn}. Condition
(ii) involves $\{\sqrt{n}W_n(t)\}$.
%
Applying inequality (2.43) of Theorem 4 in \cite{Arcones:1994} to
$f(\cdot)=(h_1(\cdot,t)-h_1(\cdot,s))-(U(t)-U(s))$, which is, by
(\ref{eq:tau}), of Hermite rank $\tau\geq 2 > 1/D$, we get
using \eqref{assum:h_1_Lipschitz}
that there exist some positive constants $C$ and $C'$ such that:
\begin{multline*}
\PE\left[|\sqrt{n}\{W_n(t)-W_n(s)\}|^2\right]\\
=\PE\left[\left\{\frac{2}{\sqrt{n}}\sum_{i=1}^n
    (h_1(X_i,t)-h_1(X_i,s)) - (U(t)-U(s))\right\}^2\right]\\
\leq C \;\PE\left[\left\{\left(h_1(X_1,t)-h_1(X_1,s)\right)-\left(U(t)-U(s)\right)\right\}^2\right]
\leq C'\; |t-s|^2\; .
\end{multline*}
Thus condition (ii) of Lemma 5.2 in \cite{borovkova:burton:dehling:2001}
is satisfied with $r=2$ and monotone function $g(t)=t$. An application
of this lemma concludes the proof.
\end{proof}

\begin{proof}[Proof of Lemma \ref{lem:controle_hermite}]
Using that, for $n\geq 1$, $(H_{n-1}\varphi)'=-H_n\varphi$, where $'$
denotes the first derivative, and 6 integrations by parts (3 with
respect to $x$ and 3 with respect to $y$), we get that for  $p,q\geq 3$,
$$
c_{p,q}(f)=\int_{\rset^2}\partial^6 f(x,y)/\partial x^3\partial y^3
H_{q-3}(y)\varphi(y)H_{p-3}(x)\varphi(x)\rmd x\rmd y\; ,
$$
where $\varphi$ is the p.d.f of a standard Gaussian random variable.
(\ref{eq:c_pq:bound}) then follows from the Cauchy-Schwarz inequality.
\end{proof}
\begin{proof}[Proof of Lemma \ref{lem:ext:soulier}]
Let $f$ be a function defined on $\mathbb{R}^{a_1+a_2}$ such that
$f(Y)=f(Y_1,Y_2)=f_1(Y_1)f_2(Y_2)$.
Note that $\PE[f(Y)]=\PE[f(\Gamma^{1/2}_0 Z)]$, where the covariance
matrix of $Z$ is equal to $\Gamma^{-1/2}_0\Gamma\Gamma^{-1/2}_0$.
By the assumption on $r^{\star}$, the latter matrix is invertible
and satisfies: \\
$(\Gamma^{-1/2}_0\Gamma\Gamma^{-1/2}_0)^{-1}
=[I_{a_1+a_2}-\Gamma^{-1/2}_0(\Gamma_0-\Gamma)\Gamma^{-1/2}_0]^{-1}
=I_{a_1+a_2}+\sum_{k\geq 1} (\Gamma^{-1/2}_0(\Gamma_0-\Gamma)\Gamma^{-1/2}_0)^k$.
Let
$\Delta:=(\Gamma^{-1/2}_0\Gamma\Gamma^{-1/2}_0)^{-1}-I_{a_1+a_2}.$
By definition of the density of the
multivariate Gaussian distribution and the definition of the matrix $\Delta$, we obtain that
\begin{multline*}
|\Gamma^{-1/2}_0\Gamma\Gamma^{-1/2}_0|^{1/2}\;\PE[f(\Gamma^{1/2}_0
Z)]\\=\int_{\mathbb{R}^{a_1+a_2}} f(\Gamma^{1/2}_0 z) \exp(-z^T\Delta
z/2) \exp(-z^T z/2) \frac{\rmd z}{(2\pi)^{(a_1+a_2)/2}}\; .
\end{multline*}
Expanding $\exp(-z^T\Delta z/2)$ in series leads to
\begin{multline*}
|\Gamma^{-1/2}_0\Gamma\Gamma^{-1/2}_0|^{1/2}\;\PE[f(\Gamma^{1/2}_0
Z)]\\
=\sum_{k\geq 0}\frac{(-1/2)^k}{k!}\int_{\mathbb{R}^{a_1+a_2}} f(\Gamma^{1/2}_0 z) (z^T\Delta
z)^k \exp(-z^T z/2) \frac{\rmd z}{(2\pi)^{(a_1+a_2)/2}}\; .
\end{multline*}
Set $\nu=[(\tau+1)/2]$, where $[x]$ denotes
the integer part of $x$. Using that $f$ is of
Hermite rank at least $\tau$ and the previous equation, we get
\begin{multline*}
|\Gamma^{-1/2}_0\Gamma\Gamma^{-1/2}_0|^{1/2}\;\PE[f(\Gamma^{1/2}_0
Z)]\\
=\sum_{k\geq \nu}\frac{(-1/2)^k}{k!}\int_{\mathbb{R}^{a_1+a_2}} f(\Gamma^{1/2}_0 z) (z^T\Delta
z)^k \exp(-z^T z/2) \frac{\rmd z}{(2\pi)^{(a_1+a_2)/2}}\; .
\end{multline*}
Since $|\sum_{k\geq\nu}(-1/2)^k (z^T\Delta z)^k/k!|
\leq |z^T\Delta z|^{\nu}\exp(|z^T\Delta z|/2)/(2^{\nu}\nu!)$, we
obtain
\begin{multline*}
|\Gamma^{-1/2}_0\Gamma\Gamma^{-1/2}_0|^{1/2}\;|\PE[f(\Gamma^{1/2}_0
Z)]|\\
\leq \frac{1}{2^{\nu} \nu!}\int_{\mathbb{R}^{a_1+a_2}}
|f(\Gamma^{1/2}_0 z)|\;|z^T\Delta z|^{\nu}\exp(|z^T\Delta z|/2)
\exp(-z^T z/2)\frac{\rmd z}{(2\pi)^{(a_1+a_2)/2}}\; .
\end{multline*}
Denoting by $\delta$ the spectral radius of $\Delta$ gives
\begin{multline*}
|\Gamma^{-1/2}_0\Gamma\Gamma^{-1/2}_0|^{1/2}\;|\PE[f(\Gamma^{1/2}_0
Z)]|\\
\leq \frac{\delta^\nu}{2^{\nu} \nu!}\int_{\mathbb{R}^{a_1+a_2}}
|f(\Gamma^{1/2}_0 z)| (z^T z)^{\nu} \exp\{(\delta/2-1/2)z^T z\}
\frac{\rmd z}{(2\pi)^{(a_1+a_2)/2}}\; .
\end{multline*}
By the Cauchy-Schwarz inequality, we get
\begin{multline}\label{eq:bound_CS}
|\Gamma^{-1/2}_0\Gamma\Gamma^{-1/2}_0|^{1/2}\;|\PE[f(\Gamma^{1/2}_0
Z)]|\\
\leq \frac{\delta^\nu}{2^{\nu} \nu!}\left(\int_{\mathbb{R}^{a_1+a_2}}
f^2(\Gamma^{1/2}_0 z)\exp(-z^T z/2)\frac{\rmd z}{(2\pi)^{(a_1+a_2)/2}}\right)^{1/2}\\
\left(\int_{\mathbb{R}^{a_1+a_2}}(z^T z)^{2\nu}\exp\{(\delta-1/2)z^T z\}
\frac{\rmd z}{(2\pi)^{(a_1+a_2)/2}}\right)^{1/2}\; .
\end{multline}
By definition of $\Delta$, the spectral radius $\delta$ of $\Delta$
satisfies $\delta\leq \sum_{k\geq 1}(r^{\star})^k\leq
r^{\star}/(1-r^{\star})$, where $r^{\star}$ is the
spectral radius of $\Gamma^{-1/2}_0(\Gamma_0-\Gamma)\Gamma^{-1/2}_0$.
By assumption on $r^{\star}$, $\delta\leq 1/2-3\varepsilon/2$ which
implies that the second integral in (\ref{eq:bound_CS}) is convergent.
The first integral in (\ref{eq:bound_CS}) satisfies
\begin{equation}\label{e:f}
\left(\int_{\mathbb{R}^{a_1+a_2}}
f^2(\Gamma^{1/2}_0 z)\exp(-z^T z/2)\frac{\rmd  z}{(2\pi)^{(a_1+a_2)/2}}\right)^{1/2}
=\|f_1\|_{2,\Gamma_{0,1}} \|f_2\|_{2,\Gamma_{0,2}}\; .
\end{equation}
Finally, under the assumption on $r^{\star}$, the spectral radius
$\delta_0$ of \\
$(\Gamma^{-1/2}_0\Gamma\Gamma^{-1/2}_0)^{-1}=\sum_{k\geq
  0} \{\Gamma^{-1/2}_0(\Gamma_0-\Gamma)\Gamma^{-1/2}_0\}^k$
satisfies $\delta_0\leq\sum_{k\geq 0} (r^{\star})^k=1/(1-r^{\star})$,
so that
$|\Gamma^{-1/2}_0\Gamma\Gamma^{-1/2}_0|^{-1/2}\leq (1-r^{\star})^{-(a_1+a_2)/2}
\leq (2/3+\varepsilon)^{-(a_1+a_2)/2}\leq (3/2)^{(a_1+a_2)/2}$.
This establishes (\ref{eq:soulier_result}).
\end{proof}
\begin{proof}[Proof of Lemma \ref{l:bar-gamma}]
Let $\textrm{I}$ denote the $2\times 2$ identity matrix. One can
express $\Gamma_{11}$ as $\Gamma_{11}=L_aL_a^T$, where $T$ denotes
the transpose so that the vector
$(\bar{X}_1,\bar{X}_2)^T=L_a^{-1}(X_1,X_2)^T$ has covariance matrix
$\bar{\Gamma}_{11}=L_a^{-1}\Gamma_{11}(L_a^{-1})^T=\textrm{I}$.
Similarly, $\Gamma_{22}=L_b L_b^T$ so that 
$(\bar{X}_3,\bar{X}_4)^T=L_b^{-1} (X_3,X_4)^T$ has covariance matrix
$\bar{\Gamma}_{22}=\textrm{I}$. Then
$$
\PE_{\Gamma}[J_a(X_1,X_2) J_b(X_3,X_4)]
=\PE_{\bar{\Gamma}}[\bar{J}_a(\bar{X}_1,\bar{X}_2)
\bar{J}_b(\bar{X}_3,\bar{X}_4)]\; ,
$$
where $\bar{J}_a=J_a\circ L_a$, $\bar{J}_b=J_b\circ L_b$ and
\begin{multline*}
\bar{\Gamma}=
\begin{bmatrix}
L_a^{-1}&0\\
0 & L_b^{-1}\\
\end{bmatrix}
\begin{bmatrix}
\Gamma_{11}&\Gamma_{12}\\
\Gamma_{21}&\Gamma_{22}\\
\end{bmatrix}
\begin{bmatrix}
(L_a^{-1})^T&0\\
0 & (L_b^{-1})^T\\
\end{bmatrix}\\
=\begin{bmatrix}
\textrm{I}&L_a^{-1}\Gamma_{12}(L_b^{-1})^T\\
L_b^{-1}\Gamma_{21}(L_a^{-1})^T&\textrm{I}\\
\end{bmatrix}\; .
\end{multline*}
Observe that
$$
L_a=
\begin{bmatrix}
1&0\\
\rho_{12}&\sqrt{1-\rho_{12}^2}\\
\end{bmatrix}
\; ,\;
L_a^{-1}=
\begin{bmatrix}
1&0\\
-\rho_{12}/\sqrt{1-\rho_{12}^2}&1/\sqrt{1-\rho_{12}^2}\\
\end{bmatrix}
$$
since $L_aL_a^T=\Gamma_{11}$. A similar expression holds for $L_b$
with $\rho_{12}$ replaced by $\rho_{34}.$
Observe that
\begin{multline*}
\bar{\Gamma}_{12}=L_a^{-1}\Gamma_{12}(L_b^{-1})^T
=\\
\begin{bmatrix}
1&0\\
-\rho_{12}/\sqrt{1-\rho_{12}^2}&1/\sqrt{1-\rho_{12}^2}\\
\end{bmatrix}
\begin{bmatrix}
\rho_{13}&\rho_{14}\\
\rho_{23}&\rho_{24}\\
\end{bmatrix}
\begin{bmatrix}
1&-\rho_{34}/\sqrt{1-\rho_{34}^2}\\
0&1/\sqrt{1-\rho_{34}^2}\\
\end{bmatrix}
\\
=
\begin{bmatrix}
\bar{\rho}_{13}&\bar{\rho}_{14}\\
\bar{\rho}_{23}&\bar{\rho}_{24}\\
\end{bmatrix}
\; ,
\end{multline*}
where the $\bar{\rho}_{ij}$ are given in the statement of the lemma.
This characterizes the matrix
$\bar{\Gamma}$ since $\bar{\Gamma}_{11}=\bar{\Gamma}_{22}=\textrm{I}$
and $\bar{\Gamma}_{21}=\bar{\Gamma}_{12}^T$.
The relations involving $\rho^\star$ and $\bar{\rho}^\star$ follow
from those relating the $\bar{\rho}_{ij}$'s to the
$\rho_{kl}$'s. Finally, one has 
$$
\PE_{\textrm{I}}[\bar{J}_a(\bar{X}_1,\bar{X}_2)^2]
=\PE_{\Gamma_{11}}[J_a(X_1,X_2)^2]\; ,
$$
and also the other similar type relations.
\end{proof}
\begin{proof}[Proof of Lemma \ref{lem:cumulants}]
We first prove that, for $p\geq 2$, the $p$th cumulant $\kappa_p$ of $a
Z_{2,D}(1)+b (Z_{1,D}(1))^2$ is equal to
\begin{multline}\label{eq:cumulants}
\kappa_p=2^{p-1}(p-1)!\; k(D)^p\int_{[0,1]^p} \rmd u_1\dots \rmd u_p \int_{[0,1]^p}
\rmd v_1\dots \rmd v_p\\
\prod_{j=1}^p [a\delta(u_j-v_j)+b] \; |u_j-v_{j-1}|^{-D}, \textrm{ with }
v_0=v_p \; ,
\end{multline}
where $k(D)=\B((1-D)/2,D)$, $\delta(x)=1$ if $x=0$, and $\delta(x)=0$ else.
Using (\ref{eq:fBm}) and (\ref{eq:rosenblatt}),
$$
a Z_{2,D}(1)+b (Z_{1,D}(1))^2=\int'_{\rset^2} K(x,y) \rmd B(x)
\rmd B(y)+b\sigma^2\; ,
$$
where
$$
K(x,y)=\int_0^1\int_0^1 [a\delta(u-v)+b] (u-x)_+^{-(D+1)/2}
(v-y)_+^{-(D+1)/2} \rmd u \rmd v\; ,
$$
and
\begin{multline*}
\sigma^2=E[Z_{1,D}(1)^2]=\int_\rset[\int_0^1
(u-x)_+^{-\frac{D+1}{2}} \rmd u]^2 \rmd x\\
=\int_0^1\int_0^1[\int_\rset
(u-x)_+^{-\frac{D+1}{2}}(v-x)_+^{-\frac{D+1}{2}}\rmd x] \rmd u \rmd v\; .
\end{multline*}
Using that for $0<\alpha<1/2$,
\begin{multline}\label{eq:beta}
\int_{\rset} (u-x)_+^{\alpha-1} (v-x)_+^{\alpha-1} \rmd x
=|u-v|^{2\alpha-1}\int_0^{\infty}y^{\alpha-1}(1+y)^{\alpha-1}
\rmd y\\=|u-v|^{2\alpha-1}\B(\alpha,-2\alpha+1) \; ,
\end{multline}
where $\B(\cdot,\cdot)$ denotes the Beta function, we get
\begin{multline}\label{e:bet}
\int_{\rset} (u-x)_+^{-(D+1)/2} (v-x)_+^{-(D+1)/2} \rmd x \\=
\B((1-D)/2,D) |u-v|^{-D} = k(D) |u-v|^{-D} \; .
\end{multline}
Thus,
\begin{equation}\label{e:VarFBM}
\sigma^2=k(D)\int_0^1\int_0^1 |u-v|^{-D} \rmd u
\rmd v=\frac{2k(D)}{(-D+1)(-D+2)}\; .
\end{equation}
Hence, using Proposition 4.2 in \cite{fox:taqqu:1987}, we have that for $p\geq 2$,
$$
\kappa_p=2^{p-1}(p-1)!\int_{\rset^p} K(x_1,x_2) K(x_2,x_3)\dots
K(x_{p-1},x_p) K(x_p,x_1) \rmd x_1\dots \rmd x_p\; .
$$
By definition of $K$, and with the convention $x_{p+1}=x_1$,
\begin{multline*}
\int_{\rset^p} K(x_1,x_2) K(x_2,x_3)\dots
K(x_{p-1},x_p) K(x_p,x_1) \rmd x_1\dots \rmd x_p\\
=\int_{[0,1]^p} \rmd u_1\dots \rmd u_p \int_{[0,1]^p} \rmd v_1\dots \rmd v_p
\int_{\rset^p} \rmd x_1\dots \rmd x_p \\
\prod_{j=1}^p [a\delta(u_j-v_j)+b] (u_j-x_j)_+^{-(D+1)/2}
(v_j-x_{j+1})^{-(D+1)/2}\\
= \int_{[0,1]^p} \rmd u_1\dots \rmd u_p \int_{[0,1]^p} \rmd v_1\dots \rmd v_p\\
\prod_{j=1}^p [a\delta(u_j-v_j)+b] \int_{\rset} (u_j-x_j)_+^{-(D+1)/2}
(v_{j-1}-x_{j})^{-(D+1)/2}
\end{multline*}
where $v_0=v_p$ since $x_j$ is associated with $u_j$ and $v_{j-1}$.
Using (\ref{e:bet}), we obtain
the expression (\ref{eq:cumulants}) for the cumulants $\kappa_p$, $p\geq 2$.
Let us now compute the limit as $n$ tends to infinity of the cumulants of
$$
A_n=\frac{n^{D-2}}{L(n)}\left[X'(a n \textrm{I}+b \mathbf{1}\mathbf{1}')X\right]
=\frac{n^{D-2}}{L(n)}\left[\widetilde{X}'
\Sigma^{1/2}\left(a n \textrm{I}+b \mathbf{1}\mathbf{1}'\right)\Sigma^{1/2}\widetilde{X}\right]\; ,
$$
where $X=(X_1,\dots,X_n)'$, $\mathbf{1}=(1,\dots,1)'$, $\textrm{I}$ is the $n\times n$
identity matrix, $\Sigma$ is the covariance matrix
of $X$ and $\widetilde{X}$ is a standard Gaussian random vector. Using \cite{stuart:ord:1987}, p. 488, the $p$th cumulant of
$A_n$ is equal to
$$
cum_p=2^{p-1} (p-1)!\; \trace(B_n^p)\; ,
$$
where $B_n=n^{D-2}L(n)^{-1}\left[\Sigma^{1/2}\left(a n \textrm{I}+b \mathbf{1}\mathbf{1}'\right)\Sigma^{1/2}\right].$
But
\begin{multline*}
\trace(B_n^p)=\left(\frac{n^{D-2}}{L(n)}\right)^p
\trace\left[\left\{(a n \textrm{I}+b \mathbf{1}\mathbf{1}')\Sigma\right\}^p\right]\\
=\left(\frac{n^{D-2}}{L(n)}\right)^p
\sum_{\stackrel{1\leq i_1,i_2,\dots,i_p\leq n}{1\leq j_1,j_2,\dots,j_p\leq n}}
D_{i_1,j_1}\rho(j_1-i_2) D_{i_2,j_2} \rho(j_2-i_3)\dots \\
D_{i_{p-1},j_{p-1}} \rho(j_{p-1}-i_p)D_{i_p,j_p} \rho(j_p-i_1)\; ,
\end{multline*}
where $\rho$ is defined in Assumption (A\ref{assum:long-range}) and $D_{i,j}=an\delta(i-j)+b$.
With the convention $i_{p+1}=i_1$,
\begin{multline*}
\trace(B_n^p)=\frac{1}{n^{2p}}\sum_{\stackrel{1\leq i_1,i_2,\dots,i_p\leq n}{1\leq j_1,j_2,\dots,j_p\leq n}}
\prod_{\ell=1}^p \left\{\frac{n^D}{L(n)}[an\delta(i_\ell-j_\ell)+b]\rho(j_\ell-i_{\ell+1})\right\}\\
=\frac{1}{n^{2p}}\sum_{\stackrel{1\leq i_1,i_2,\dots,i_p\leq n}{1\leq j_1,j_2,\dots,j_p\leq n}}
\prod_{\ell=1}^p \left\{\frac{n^D}{L(n)}[an\delta(i_\ell-j_\ell)+b]\rho(j_{\ell-1}-i_\ell)\right\}\; ,
\end{multline*}
where $j_0=j_p$. Thus, as $n$ tends to infinity,
\begin{multline*}
\trace(B_n^p)\to \int_{[0,1]^p} \rmd u_1\dots \rmd u_p \int_{[0,1]^p} \rmd v_1\dots \rmd v_p\\
\prod_{j=1}^p [a\delta(u_j-v_j)+b] \int_{\rset} |u_j-v_{j-1}|^{-D}\; ,
\end{multline*}
with the convention $v_0=v_p$, which gives the expected result.
\end{proof}
\begin{proof}[Proof of Lemma \ref{lem:partie_qui_compte}]
By Assumption (A\ref{assum:long-range}), $\rho(k)=k^{-D} L(k)$.
Using the adaptation of Karamata's theorem given in \cite{taqqu:1975},
one gets
\begin{equation}\label{eq:taqqu_trick}
\textrm{if } D<1/m,\; \sum_{|k|<n}|\rho(k)|^{m}\sim \frac{2}{(1-mD)(2-mD)}n^{1-mD}(L(n))^{m}\; .
\end{equation}
The bound (\ref{case_(i)}) follows from
$$
\PE[\{\sum_{i=1}^n X_i\}^2]\leq  (n+\sum_{1\leq i\neq j\leq
  n}|\rho(i-j)|)
\leq n(1+\sum_{|k|<n, k\neq 0} |\rho(k)|)
\; ,
$$
and by setting $m=1$ in (\ref{eq:taqqu_trick}).
Let us prove (\ref{case_(ii)_a}). Given that
$\PE[H_p(X_i)H_q(X_j)]=p!\; \delta(p-q) \rho(i-j)^p$, for all integers
$p,q,i,j\geq 1$, we obtain
\begin{multline*}
\PE[\{\sum_{i=1}^n (X_i^2-1)\}^2]=\PE[\sum_{1\leq i,j\leq n} H_2(X_i) H_2(X_j)]\\
= 2 n + 2\sum_{1\leq i\neq j\leq n} \rho(i-j)^2
\leq 2n(1+\sum_{|k|<n, k\neq 0} \rho(k)^2)
\; .
\end{multline*}
The bound (\ref{case_(ii)_a}) follows by using that $D<1/2$ and
(\ref{eq:taqqu_trick}) with $m=2$.
Let us now prove (\ref{case_(ii)_b}). Note that
\begin{multline}\label{eq:compte_2}
\PE[(\sum_{1\leq i\neq j\leq n}X_i X_j)^2]
=\sum_{\stackrel{1\leq i\neq j\leq n}{1\leq k\neq \ell\leq n}}\PE(X_i X_j X_k X_{\ell})\\
=\sum_{1\leq i\neq j\leq n} \PE(X_i^2 X_j^2)
+\sum_{\stackrel{1\leq i,j,k,\ell\leq n}{|\{i,j,k,\ell\}|=4}} \PE(X_i X_j X_k X_{\ell})
+6\sum_{\stackrel{1\leq i,j,\ell\leq n}{|\{i,j,\ell\}|=3}} \PE(X_i^2 X_j X_{\ell})\; .
\end{multline}
Writing $X_i^2=H_2(X_i)+1$ and using that $\PE[H_2(X_i)H_2(X_j)]=2 \rho(i-j)^2$, for all $i,j\geq 1$,
the first term in the r.h.s of \eqref{eq:compte_2} satisfies
$$
\sum_{1\leq i\neq j\leq n} \PE(X_i^2 X_j^2)\leq n^2+2n\sum_{|k|<n,k\neq 0} \rho(k)^2\; .
$$
Using Lemma 3.2 P. 210 in \cite{taqqu:1977}, the second term
in the r.h.s of \eqref{eq:compte_2} satisfies, for some positive
constant $C$,
$$
\sum_{\stackrel{1\leq i,j,k,\ell\leq n}{|\{i,j,k,\ell\}|=4}} \PE(X_i
X_j X_k X_{\ell})
\leq C n^2 (\sum_{|k|<n,k\neq 0} |\rho(k)|)^2\; .
$$
Writing $X_i^2=H_2(X_i)+1$ and using Lemma 3.2 P. 210 in \cite{taqqu:1977}, the third term
in the r.h.s of \eqref{eq:compte_2} satisfies, for some positive
constant $C$,
$$
\sum_{\stackrel{1\leq i,j,\ell\leq n}{|\{i,j,\ell\}|=3}} \PE(X_i^2 X_j X_{\ell})
\leq C n(\sum_{|k|<n} |\rho(k)|)^2+n^2\sum_{|k|<n} |\rho(k)|\; .
$$
The last three inequalities lead to the expected result by using (\ref{eq:taqqu_trick}).
\end{proof}
\begin{proof}[Proof of Lemma \ref{lem:reste}]
Set $\alpha_{p,q}(s,t)=\alpha_{p,q}(t)-\alpha_{p,q}(s)$ for all
$s,t$ in $\rset$, where $\alpha_{p,q}(\cdot)$ is defined in \eqref{e:alpha}.
Then
\begin{equation}\label{eq:second_decomp}
\PE[(\widetilde{R}_n(t)-\widetilde{R}_n(s))^2]=
\sum_{\stackrel{1\leq i_1\neq i_2\leq n}{1\leq i_3\neq i_4\leq n}}
\PE[\widetilde{J}_{s,t}(X_{i_1},X_{i_2})\widetilde{J}_{s,t}(X_{i_3},X_{i_4})]\; ,
\end{equation}
where for all $x,y$ in $\rset$ and $s,t$ in $I$,
\begin{multline}\label{e:Jm=1}
\widetilde{J}_{s,t}(x,y)=(h(x,y,t)-h(x,y,s))-(\alpha_{1,0}(t)-\alpha_{1,0}(s))(x+y)\\-(U(t)-U(s))\;
, \textrm{ if } m=1\; , 
\end{multline}
\begin{multline}\label{e:Jm=2}
\widetilde{J}_{s,t}(x,y)=(h(x,y,t)-h(x,y,s))-(\alpha_{1,1}(t)-\alpha_{1,1}(s))xy\\
-\frac12(\alpha_{2,0}(t)-\alpha_{2,0}(s))(x^2+y^2-2)
-(U(t)-U(s))\;
, \textrm{ if } m=2\; .
\end{multline}
To obtain these relations express $\widetilde{R}_n(t)-\widetilde{R}_n(s)$
using (\ref{eq:herm_decomp_Un}), (\ref{e:Uh}) and (\ref{def:W_n}).
We now consider 3 cases, depending on the cardinality of the set
$\{i_1,i_2,i_3,i_4\}$.

1) We start with the case of cardinality 2.
Let us address the case where the sum is over the set of indices
$\{i_1,i_2,i_3,i_4\}$ such that $i_1=i_3$ and
$i_2=i_4$. We shall
only focus on the case where $m=1$ because the case $m=2$ could be
addressed in the same way.
We thus need to show that
\begin{equation}\label{e:Balpha}
\PE[(\widetilde{R}_{n}(t)-\widetilde{R}_{n}(s))^2]\leq C
n^{4-D-\alpha}|t-s|\; .
\end{equation}
Using that $h$, $U$ and $\alpha_{1,0}$
are bounded functions, there exists a positive constant $C$ such that
\begin{multline*}
\sum_{1\leq i_1\neq i_2\leq n}
\PE[\widetilde{J}_{s,t}^2(X_{i_1},X_{i_2})]\leq C
\sum_{1\leq i_1\neq i_2\leq n}\PE\left[|h(X_{i_1},X_{i_2},t)-h(X_{i_1},X_{i_2},s)|\right. \\
\left.+|\alpha_{1,0}(t)-\alpha_{1,0}(s)|\{(X_{i_1}+X_{i_2})^2+|X_{i_1}+X_{i_2}|\}
+|U(t)-U(s)|\right]\; .
\end{multline*}
Since Condition (\ref{eq:lip_h}) holds and $U$, $\widetilde{\Lambda}$,
defined in (\ref{eq:lambda_tilde})
are Lipschitz functions, there exist positive constants $C_1$ and
$C_2$ such that
\begin{multline*}
\sum_{1\leq i_1\neq i_2\leq n}
\PE[\widetilde{J}_{s,t}^2(X_{i_1},X_{i_2})]\\
\leq C_1 n(n-1) |t-s|+C_2|t-s|\sum_{1\leq i_1\neq i_2\leq
  n}\PE[(X_{i_1}+X_{i_2})^2+|X_{i_2}|+|X_{i_2}|]\\
\leq C n(n-1) |t-s|\; ,
\end{multline*}
which gives (\ref{e:Balpha}).

2) Let us now address the case where the sum is over the set of indices
$\{i_1,i_2,i_3,i_4\}$ having a cardinal number equal to 3
\textit{i.e.} for instance when $i_3=i_1$. As previously, we
focus on the case where $m=1$.  Using that $h$, $U$ and $\alpha_{1,0}$
are bounded functions, Condition (\ref{eq:lip_h}), the Lipschitz
property of  $U$ and $\widetilde{\Lambda}$,
there exists a positive constant $C$ such that
$$
\sum_{\stackrel{1\leq i_1\neq i_2\leq n}{1\leq i_1\neq i_4\leq n}}
\PE[\widetilde{J}_{s,t}(X_{i_1},X_{i_2})\widetilde{J}_{s,t}(X_{i_1},X_{i_4})]
\leq C n^3 |t-s|\; ,
$$
which gives (\ref{e:Balpha}).

3) Let us now consider the case where the sum is over the indices
$i_1,i_2,i_3,i_4$ such that the cardinal number of the set
$\{i_1,i_2,i_3,i_4\}$ is equal to 4. 
This case is similar to the case 3) in the proof of Lemma
\ref{eq:maj_Rn}. We need to show that
\begin{equation}\label{e:goal}
a_n^2\sum_{\stackrel{1\leq i_1,i_2,i_3,i_4\leq n}{|\{i_1,i_2,i_3,i_4\}|=4}}
\PE[\widetilde{J}(X_{i_1},X_{i_2})\widetilde{J}(X_{i_3},X_{i_4})]
\leq C \frac{|t-s|}{n^{\alpha}}\; ,
\end{equation}
where $a_n=n^{mD/2-2}L(n)^{-m/2}$, $\widetilde{J}=\widetilde{J}_{s,t}$
is defined in (\ref{e:Jm=1}) if $m=1$ and in (\ref{e:Jm=2}) if $m=2$,
$C>0$ and $\alpha>0$. We will only present the case $m=1$. The idea
is, once again, to replace
$(X_{i_1},X_{i_2},X_{i_3},X_{i_4})$ by
$(\bar{X}_{i_1},\bar{X}_{i_2},\bar{X}_{i_3},\bar{X}_{i_4})$
using Lemma \ref{l:bar-gamma}, so that (\ref{e:JJbar}) holds with $J$,
$J_{i_1,i_2}$, $J_{i_3,i_4}$ replaced by $\widetilde{J}$,
$\widetilde{J}_{i_1,i_2}$, $\widetilde{J}_{i_3,i_4}$ and to expand
$\widetilde{J}_{i_1,i_2}(\bar{X}_{i_1},\bar{X}_{i_2})$ and
$\widetilde{J}_{i_3,i_4}(\bar{X}_{i_3},\bar{X}_{i_4})$ in Hermite
polynomials as in (\ref{e:JKsum}), with an expansion up to $K$, thus
defining $\widetilde{J}_{i_1,i_2}^K$ and  $\widetilde{J}_{i_3,i_4}^K$.
Then, (\ref{e:JJK}) holds with $J$ replaced by $\widetilde{J}$.
Denoting again the coefficients of the expansion by $c_{p_1,p_2}^{i_1,i_2}$
and $c_{p_3,p_4}^{i_3,i_4}$ respectively, one needs to majorize the
right-hand side of (\ref{e:bd1}).
Assuming that $|i_1-i_2|$ is the smallest distance between two
different indices, namely  $|i_1-i_2|=\min\{|i_1-i_2|,|i_1-i_3|,
|i_1-i_4|,|i_2-i_3|,|i_2-i_4|,|i_3-i_4|\}$, we obtain that the 
$\bar{\rho}_{ij}$'s are bounded by $4\rho(i_1-i_2)/(1-\rho(i_1-i_2)^2)$.
Using that there exist positive constants $C$ and $\varepsilon$ such that
$|\rho(k)|\leq C(1\wedge |k|^{-D+\varepsilon})=:\gamma(k)$,
for all $ k\geq 1$,
we get, by (\ref{e:rhobar}), that the $\bar{\rho}_{ij}$'s are bounded by 
\begin{equation}\label{e:gammabar}
4\gamma(i_1-i_2)/(1-\gamma(i_1-i_2)^2)=:\bar{\gamma}(i_1-i_2)\;.
\end{equation}
By Lemma 3.2 in \cite{taqqu:1977}, we obtain
\begin{multline*}
\PE_{\bar{\Gamma}}[H_{p_1}(\bar{X}_{i_1})H_{p_2}(\bar{X}_{i_2}) H_{p_3}(\bar{X}_{i_3}) H_{p_4}(\bar{X}_{i_4})]\\
\leq C \bar{\gamma}(i_1-i_2)^{\frac{p_1+p_2+p_3+p_4}{2}} |\PE[H_{p_1}(X) H_{p_2}(X) H_{p_3}(X) H_{p_4}(X)]|\; .
\end{multline*}
Using (\ref{eq:taqqu_bound}), $\PE_{\bar{\Gamma}}[\widetilde{J}_{i_1,i_2}^K(\bar{X}_{i_1},\bar{X}_{i_2})
\widetilde{J}_{i_3,i_4}^K(\bar{X}_{i_3},\bar{X}_{i_4})]$ is bounded by
$$
\sum_{1\leq p_1,p_2\leq K}
\frac{|c_{p_1,p_2}^{i_1,i_2}|}{\sqrt{p_1!\;p_2!}}(3\bar{\gamma}(i_1-i_2))^{\frac{p_1+p_2}{2}}
\sum_{1\leq p_3,p_4\leq K}
\frac{|c_{p_3,p_4}^{i_3,i_4}|}{\sqrt{p_3!\;p_4!}}(3\bar{\gamma}(i_1-i_2))^{\frac{p_3+p_4}{2}}\;.
$$
Using the Cauchy-Schwarz inequality, the first sum is bounded by
$$
\PE_{\textrm{I}}[\widetilde{J}_{i_1,i_2}(\bar{X}_{i_1},\bar{X}_{i_2})^2]^{1/2}
\left(\sum_{p\geq 1} (3\bar{\gamma}(i_1-i_2))^p\right)\;,
$$
and similarly for the second sum. It follows from Lemma
\ref{l:bar-gamma} that \\$\PE_{\bar{\Gamma}}[\widetilde{J}_{i_1,i_2}^K(\bar{X}_{i_1},\bar{X}_{i_2})
\widetilde{J}_{i_3,i_4}^K(\bar{X}_{i_3},\bar{X}_{i_4})]$ is bounded by
$$
\PE_{\Gamma_{11}}[\widetilde{J}(X_{i_1},X_{i_2})^2]^{1/2}
\PE_{\Gamma_{22}}[\widetilde{J}(X_{i_3},X_{i_4})^2]^{1/2}
\left(\sum_{p\geq 1} (3\bar{\gamma}(i_1-i_2))^p\right)^2\; .
$$
Since Condition (\ref{eq:lip_h}) holds and $U$, $\widetilde{\Lambda}$,
defined in (\ref{eq:lambda_tilde})
are Lipschitz functions,
$$
\PE_{\Gamma_{11}}[\widetilde{J}(X_{i_1},X_{i_2})^2]^{1/2}
\PE_{\Gamma_{22}}[\widetilde{J}(X_{i_3},X_{i_4})^2]^{1/2}
\leq C |t-s|\; .
$$
We deduce from the previous inequalities that
\begin{multline*}
a_n^2\sum_{\stackrel{1\leq i_1,i_2,i_3,i_4\leq n}{|\{i_1,i_2,i_3,i_4\}|=4}}
\PE[\widetilde{J}(X_{i_1},X_{i_2})\widetilde{J}(X_{i_3},X_{i_4})]\\
\leq C n^3 a_n^2 |t-s| \sum_{\stackrel{|k|<n}{k\neq 0}}\left(\sum_{p\geq 1}
  (3\bar{\gamma}(k))^p\right)^2\; ,
\end{multline*}
where $\bar{\gamma}$ is defined in \eqref{e:gammabar}.
Let $\eta$ be a positive
constant such that $\eta>3$.
Then there is $K\geq 1$ such that $\eta\bar{\gamma}(k)\leq 1$,
for all $k\geq K$. We may suppose without loss of generality
that $K=1$, that is $\eta\bar{\gamma}(k)\leq 1$, for all $k\geq 1$.
We then obtain that for large enough $n$,
\begin{equation}\label{eq:tilde_sum}
a_n^2\PE[(\widetilde{R}_n(t)-\widetilde{R}_n(s))^2]
\leq C\eta^{2} |t-s|\; a_n^2\; n^3
\left(\sum_{\stackrel{|k|<n}{k\neq 0}}\bar{\gamma}(k)^{2} \right)
\left(\sum_{p\geq 1}\left(\frac{3}{\eta}\right)^{p}\right)\; .
\end{equation}
Observe that $a_n^2 n^3=n^{D-1}L(n)^{-1}$.
Recall also that  $\bar{\gamma}(k)$ is defined in (\ref{e:gammabar}) and note
that $\sum_{k} \bar{\gamma}(k)^{2}$ may be finite or infinite.
If $\sum_{k} \bar{\gamma}(k)^{2}<\infty$ then the result (\ref{e:Rtilde}) follows with
$\alpha=1-D$ which is positive since $D<1$.
If  $\sum_{k} \bar{\gamma}(k)^{2}=\infty$ then $\sum_{|k|<n}
\bar{\gamma}(k)^{2}\sim 16 n^{1-2D+2\varepsilon}$ and the
result follows with $\alpha=D-2\varepsilon$ if $\varepsilon$ is
chosen small enough to ensure that this quantity is positive.
\end{proof}
\begin{proof}[Proof of Lemma \ref{lem:reste_D<1/2}]
We want to apply Lemma 5.2, P. 4307 of\\
\cite{borovkova:burton:dehling:2001} to $\{a_n \widetilde{R}_n(r),r\in
I\}$. To do so, we first prove that for all
$s,t\in I,\delta>0$ such that $s\leq t\leq s+\delta$ and $s+\delta\in I$:
\begin{multline}\label{eq:decomp_Rn_tilde_1}
a_n|\widetilde{R}_n(t)-\widetilde{R}_n(s)|
\leq a_n|\widetilde{R}_n(s+\delta)-\widetilde{R}_n(s)|
+2a_n |\widetilde{\Lambda}(s+\delta)-\widetilde{\Lambda}(s)|
|\sum_{1\leq i\neq j\leq n} X_i|\\
+2a_n n(n-1)|U(s+\delta)-U(s)|\; , \textrm{ if $m=1$ and $D<1$}\;,
\end{multline}
where $\widetilde{\Lambda}$ is defined in (\ref{eq:lambda_tilde}) and
\begin{multline}\label{eq:decomp_Rn_tilde_2}
a_n|\widetilde{R}_n(t)-\widetilde{R}_n(s)|
\leq a_n|\widetilde{R}_n(s+\delta)-\widetilde{R}_n(s)|
+2a_n|\widetilde{\Lambda}(s+\delta)-\widetilde{\Lambda}(s)|\\
[\,|\sum_{1\leq i\neq j\leq n} X_i X_j|
+|\sum_{1\leq i\neq j\leq n}(X_i^2-1)|\;]
+2a_n n(n-1)|U(s+\delta)-U(s)|\; , \\\textrm{ if $m=2$ and $D<1/2$}\; .
\end{multline}
Let us focus on the proof of \eqref{eq:decomp_Rn_tilde_1}, where $m=1$
and $D<1$, since the proof
of \eqref{eq:decomp_Rn_tilde_2} can be obtained by using similar arguments.
In view of the definition (\ref{eq:herm_decomp_Un}) of $\widetilde{R}_n$ and the fact that $U_n$ and $U$ are non
decreasing functions, we obtain
$$
\widetilde{R}_n(t)-\widetilde{R}_n(s)\leq \widetilde{R}_n(s+\delta)-\widetilde{R}_n(s)
+\widetilde{W}_n(s+\delta)-\widetilde{W}_n(t)+ n(n-1)(U(s+\delta)-U(s))\; .
$$
Remark that the monotonicity of $h$ in (\ref{eq:lambda_tilde}) implies that $\widetilde{\Lambda}$ is
a non decreasing function and that for $p+q\leq 2$,
\begin{equation}\label{e:AL}
|\alpha_{p,q}(s)-\alpha_{p,q}(r)|\leq|\widetilde{\Lambda}(s)-\widetilde{\Lambda}(r)|\;,
\; \textrm{for all } r, s\; .
\end{equation}
Since $p+q\leq 2$ and $m=1$, we need to consider only $p=1$, $q=0$ and
$p=0$, $q=1$ in (\ref{def:W_n}), we thus get
\begin{equation}\label{e:WL}
\widetilde{W}_n(s+\delta)-\widetilde{W}_n(t)\leq
2(\widetilde{\Lambda}(s+\delta)-\widetilde{\Lambda}(s))\;
|\sum_{1\leq i\neq j\leq n} X_i|\; .
\end{equation}
In the same way, after switching $s$ and $t$, we obtain
\begin{multline*}
\widetilde{R}_n(s)-\widetilde{R}_n(t)\leq \widetilde{R}_n(s+\delta)-\widetilde{R}_n(s)
+2(\widetilde{\Lambda}(s+\delta)-\widetilde{\Lambda}(s))
|\sum_{1\leq i\neq j\leq n} X_i|\\+2 n(n-1)(U(s+\delta)-U(s))\; ,
\end{multline*}
which gives \eqref{eq:decomp_Rn_tilde_1}.
In Lemma 5.2, P. 4307 of
\cite{borovkova:burton:dehling:2001}, the monotone Lipschitz-continuous function
$\Lambda$ is here $2U$, the process $\{Y_n(t)\}$ is
$\{a_n\widetilde{\Lambda}(t)(\sum_{1\leq i\neq j\leq n} X_i)\}$ if $m=1$,
and if $m=2$, the process $\{Y_n(t)\}$ is
$\{a_n\widetilde{\Lambda}(t)(\sum_{1\leq i\neq j\leq n} X_i X_j
+\sum_{1\leq i\neq j\leq n}(X_i^2-1))\}$.
Using Lemma \ref{lem:partie_qui_compte} and the fact that the function
$\widetilde{\Lambda}$ defined in (\ref{eq:lambda_tilde}) is a Lipschitz
function, the processes $\{Y_n(t),t\in I\}$ defined above satisfy
the condition (ii) of Lemma 5.2 in \cite{borovkova:burton:dehling:2001} with $r=2$.
Using Lemma \ref{lem:reste},
the condition (i) of Lemma 5.2 in \cite{borovkova:burton:dehling:2001}
is also satisfied. This concludes the proof.
\end{proof}

\section{Numerical experiments}\label{sec:exp}

In this section,  we investigate the robustness properties of the
Hodges-Lehmann and Shamos scale estimators defined in Section
\ref{sec:appli} using Monte Carlo experiments.
We shall regard the observations $X_t$, $t=1,\dots,n$, as a
stationary series $Y_t$, $t=1,\dots,n$, corrupted by additive
outliers of magnitude $\omega$. Thus we set 
\begin{align}\label{eq:ao}
X_t=Y_t+\omega W_t,
\end{align}
where $W_{t}$ are i.i.d. random variables. In Section
\ref{sub_sec:HL},  $W_{t}$ are Bernoulli($p/2$) random variables.
In Section \ref{sub_sec:SB}, $W_{t}$ are such that 
$\PP\left(W_t=-1\right)=\PP\left(W_t=1\right)$ $=p/2$ and
$\PP\left(W_t=0\right)=1-p$, hence $\mathbb{E}[W_t]=0$ and
$\mathbb{E}[W_t^2]=\Var(W_t)=p$. Observe that, in this case, $W$ is the product of Bernoulli($p$)
and \textit{Rademacher} independent random variables; the latter equals $1$ or $-1$, both with
probability $1/2$.
$(Y_t)_t$ is a stationary time series and it is assumed that $Y_t$ and $W_{t}$ are independent random variables.
The empirical study is based on 5000 independent replications  with
$n=600$, $p=10\%$ and $\omega= 10$.  
We consider the cases where $(Y_t)$ are Gaussian
ARFIMA$(1,d,0)$ processes, that is, 
\begin{equation}\label{e:FARIMA}
Y_t=(I-\phi B)^{-1}(I-B)^{-d} Z_t\; ,
\end{equation}
where $B$ denotes the backward operator, $\phi=0.2$ and $d=0.1$, 0.35,
corresponding respectively to 
$D=0.8$, 0.3, where $D$ is defined in (A\ref{assum:long-range}) and
$(Z_t)$ are i.i.d $\mathcal{N}(0,1)$.

\subsection{Hodges-Lehmann estimator}\label{sub_sec:HL}

In this section, we illustrate the results of
Proposition \ref{prop:hodges-lehmann}. In Figure
\ref{fig:hodg_lehm_no_out}, the empirical density functions
of $\hat{\theta}_{HL}$ and  $\bar{X}_n$
are displayed when $X_t$ has no outliers with $d=0.1$ (left) and
$d=0.35$ (right). 
In these cases both shapes are similar
to the limit indicated in Proposition \ref{prop:hodges-lehmann}, that
is, a Gaussian density with mean zero.

\begin{figure}[!h]
\begin{center}
\begin{tabular}{cc}
\includegraphics[width=0.52\textwidth]{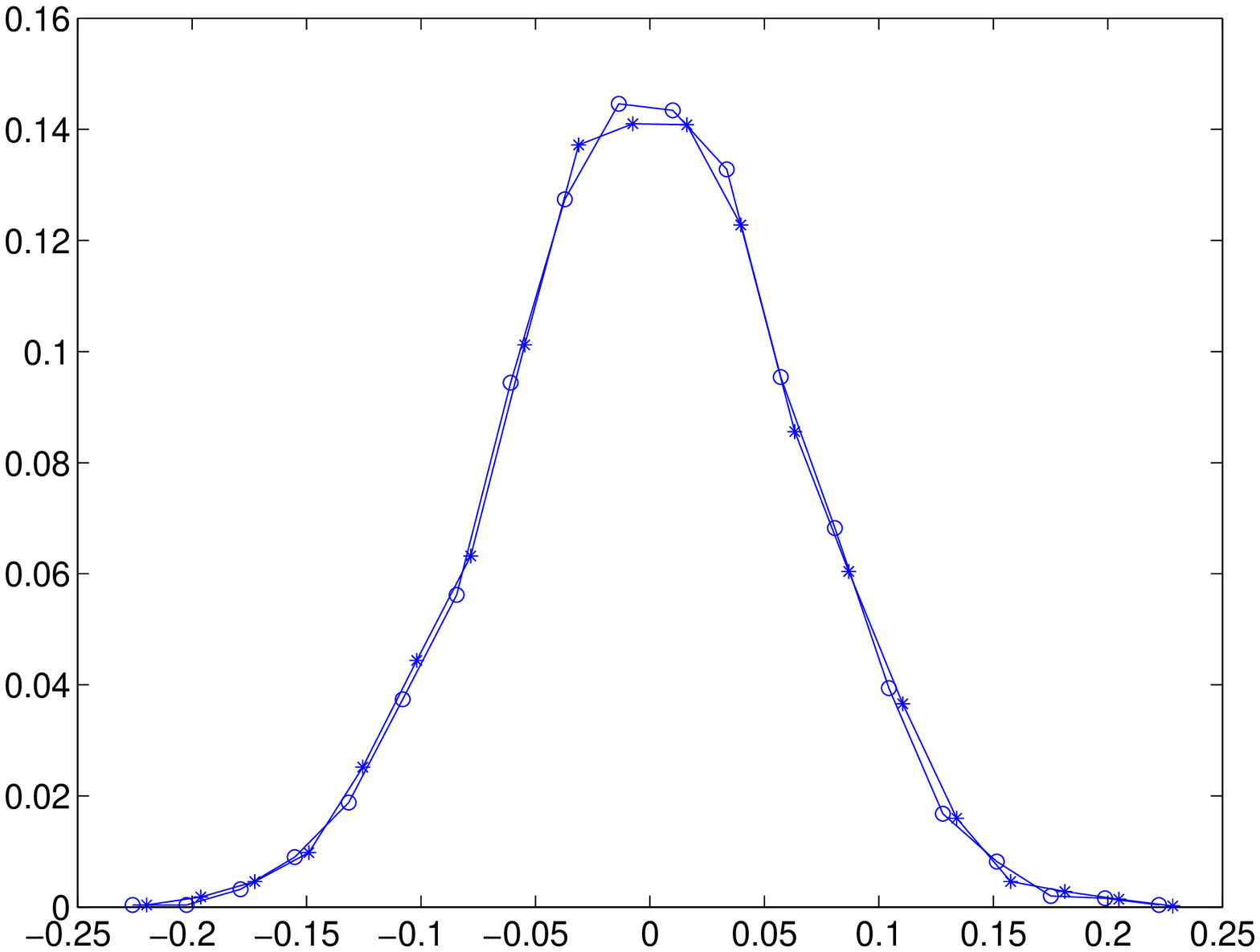}
&\hspace{-8mm}\includegraphics[width=0.48\textwidth]{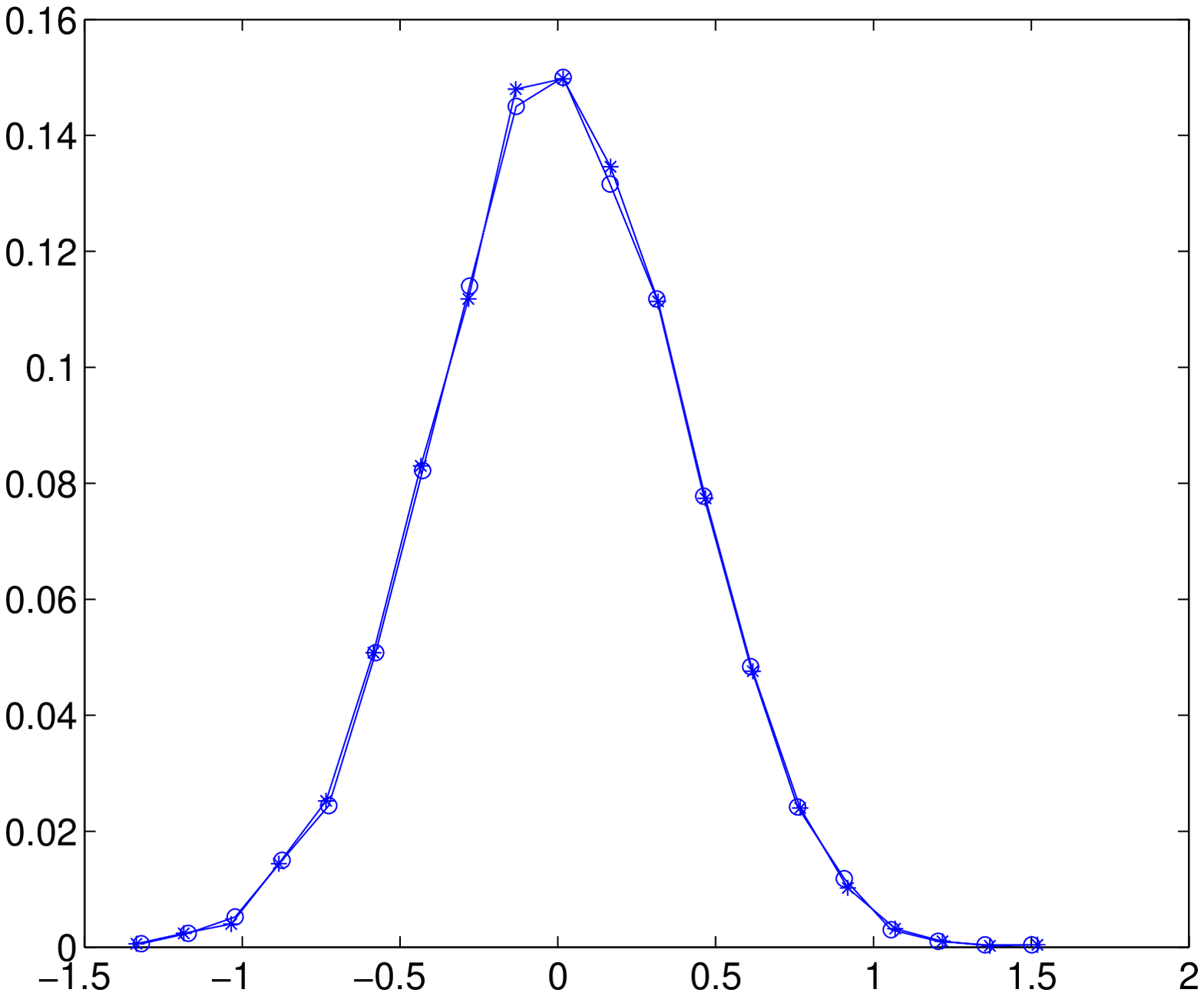}
\end{tabular}
\vspace{-7mm}
\caption{\footnotesize{Empirical densities of the
   quantities $\hat{\theta}_{HL}$
   ('*') and
   $\bar{X}_n$ ('o')  for the
   ARFIMA$(1,d,0)$ model with $d=0.1$ (left), $d=0.35$ (right), $n=600$ without
   outliers.}}
\label{fig:hodg_lehm_no_out}
\end{center}
\end{figure}

Figure \ref{fig:hodg_lehm_out} displays the same quantities as in
Figure \ref{fig:hodg_lehm_no_out} when $X_t$ has outliers with
$d=0.1$ (left) and $d=0.35$ (right). 
As expected, the sample mean is much more sensitive to
the presence of outliers than the Hodges-Lehmann estimator.
Observe that when the long-range dependence is strong (large $d$), the
effect of outliers is less pronounced.

\begin{figure}[!h]
\begin{center}
\begin{tabular}{cc}
\includegraphics[width=0.52\textwidth]{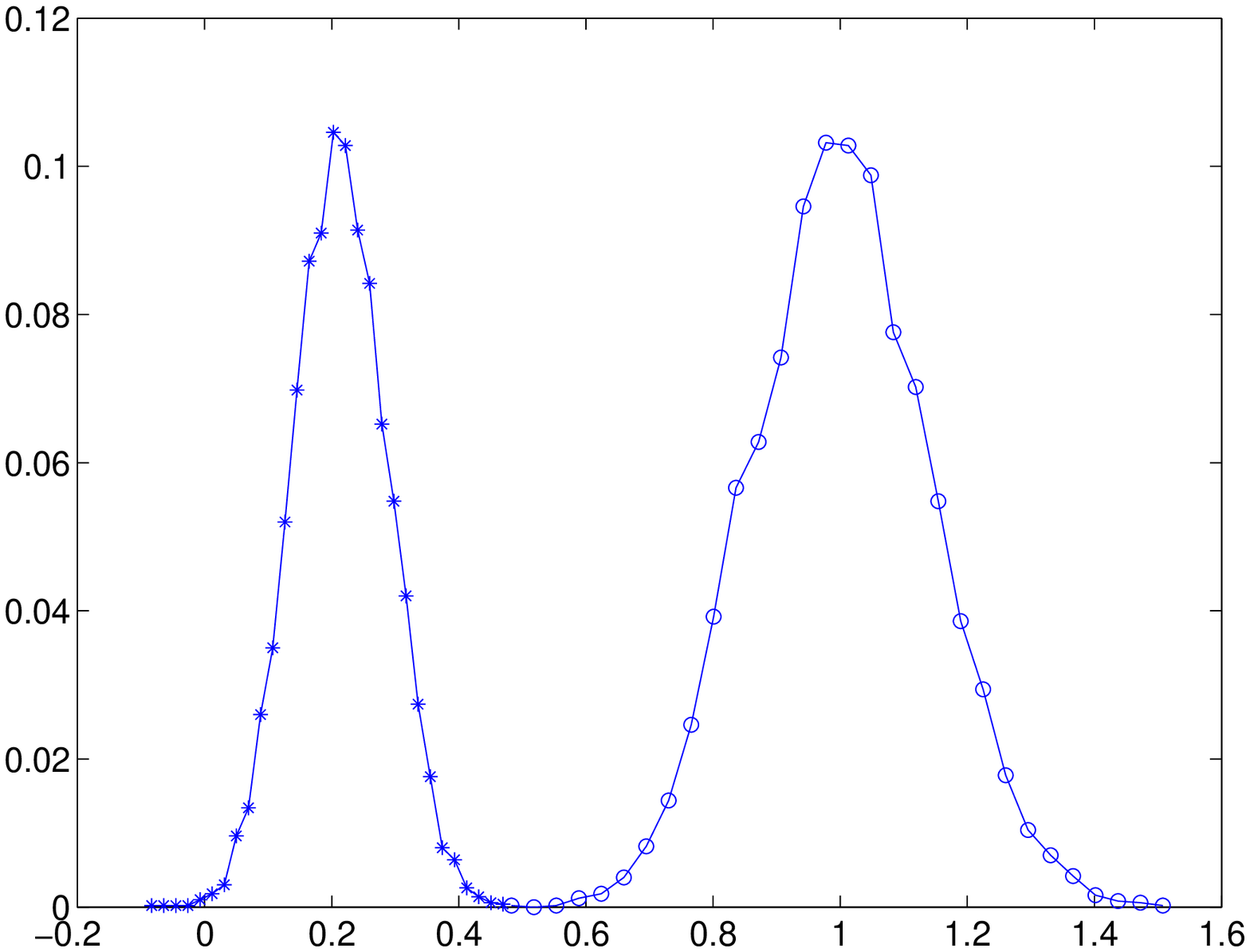}
&\hspace{-8mm}\includegraphics[width=0.52\textwidth]{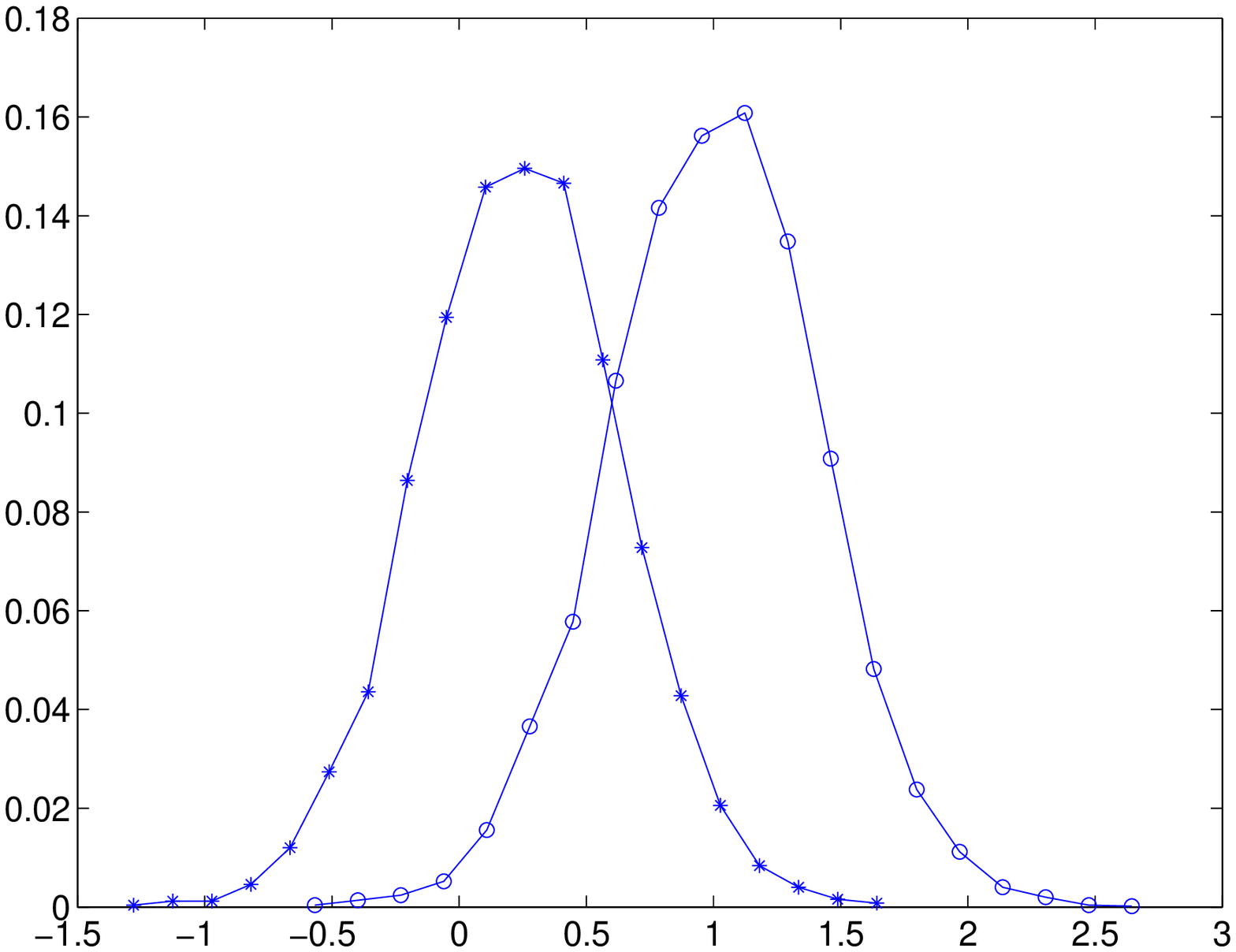}
\end{tabular}
\vspace{-7mm}
\caption{\footnotesize{Empirical densities of the
   quantities $\hat{\theta}_{HL}$
   ('*') and $\bar{X}_n$ ('o')  for the
   ARFIMA$(1,d,0)$ model with $d=0.1$ (left), $d=0.35$ (right), $n=600$ with
   outliers ($p=10\%$ and $\omega=10$).}}
\label{fig:hodg_lehm_out}
\end{center}
\end{figure}

\subsection{Shamos  scale estimator}\label{sub_sec:SB}

In this section, we illustrate the results of
Proposition \ref{prop:shamos_scale}.
In Figure \ref{fig:sham_bick_d02}, the empirical densities of
$\hat{\sigma}_{BL}-\sigma$ and $\hat{\sigma}_{n,X}-\sigma$
are displayed when $d=0.1$ without outliers (left) and with outliers
(right). In the left part of this figure, we illustrate the results
of the first part of Proposition \ref{prop:shamos_scale} since both shapes are similar to
that of Gaussian density with mean zero. On the right part
of Figure \ref{fig:sham_bick_d02}, we can see that the classical scale
estimator is much more sensitive to the presence of outliers than
the Shamos-Bickel estimator.

\begin{figure}[!h]
\begin{center}
\begin{tabular}{cc}
\includegraphics[width=0.52\textwidth]{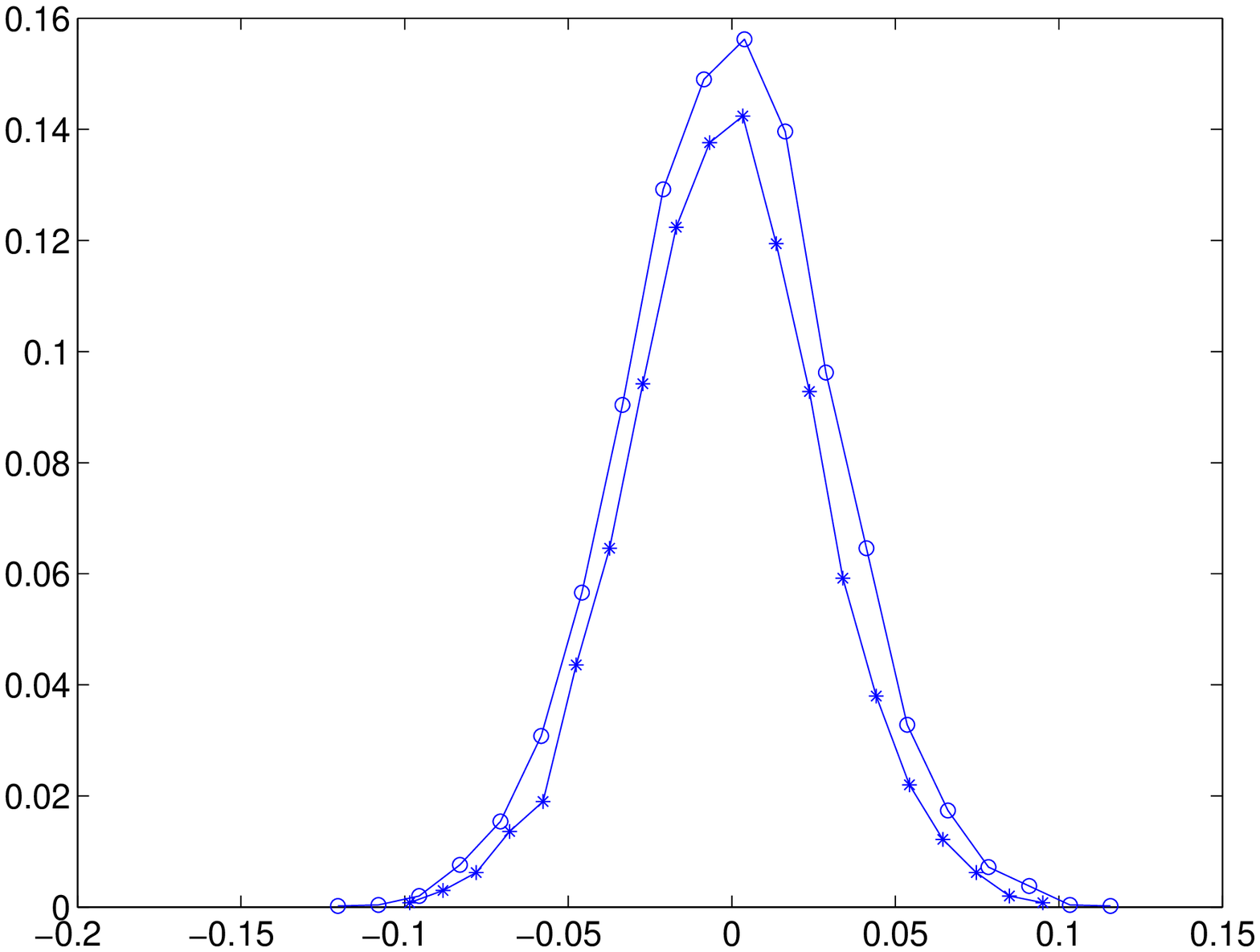}
&\hspace{-8mm}\includegraphics[width=0.52\textwidth]{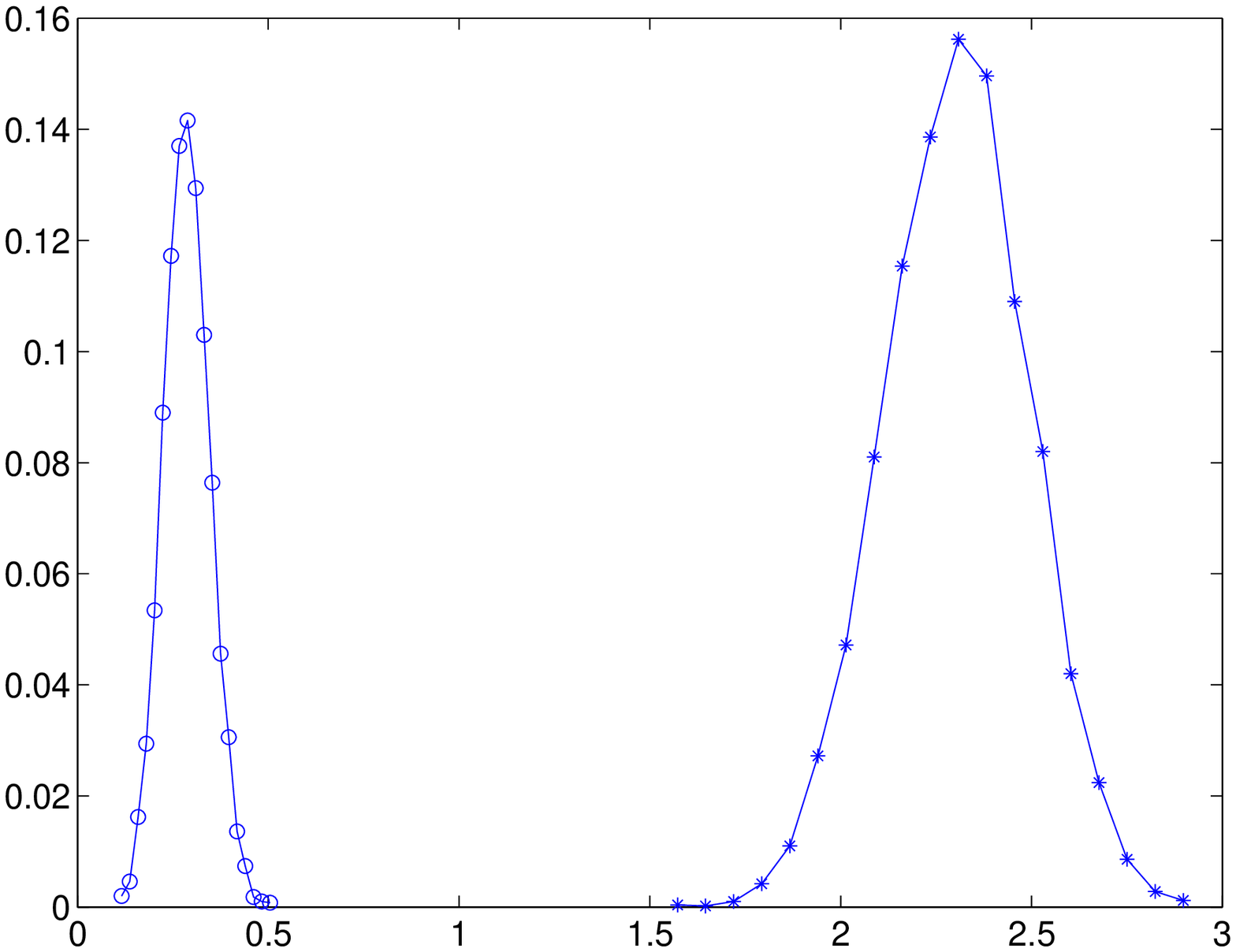}
\end{tabular}
\vspace{-7mm}
\caption{\footnotesize{Empirical densities of the
   quantities $(\hat{\sigma}_{BL}-\sigma)$
   ('*') and
   $(\hat{\sigma}_{n,X}-\sigma)$ ('o')  for the
   ARFIMA$(1,d,0)$ model with $d =0.1$, $n=600$ without
   outliers (left) and with outliers $p=10\%$ and $\omega=10$
   (right).}}
\label{fig:sham_bick_d02}
\end{center}
\end{figure}

\begin{figure}[!h]
\begin{center}
\begin{tabular}{cc}
\includegraphics[width=0.52\textwidth]{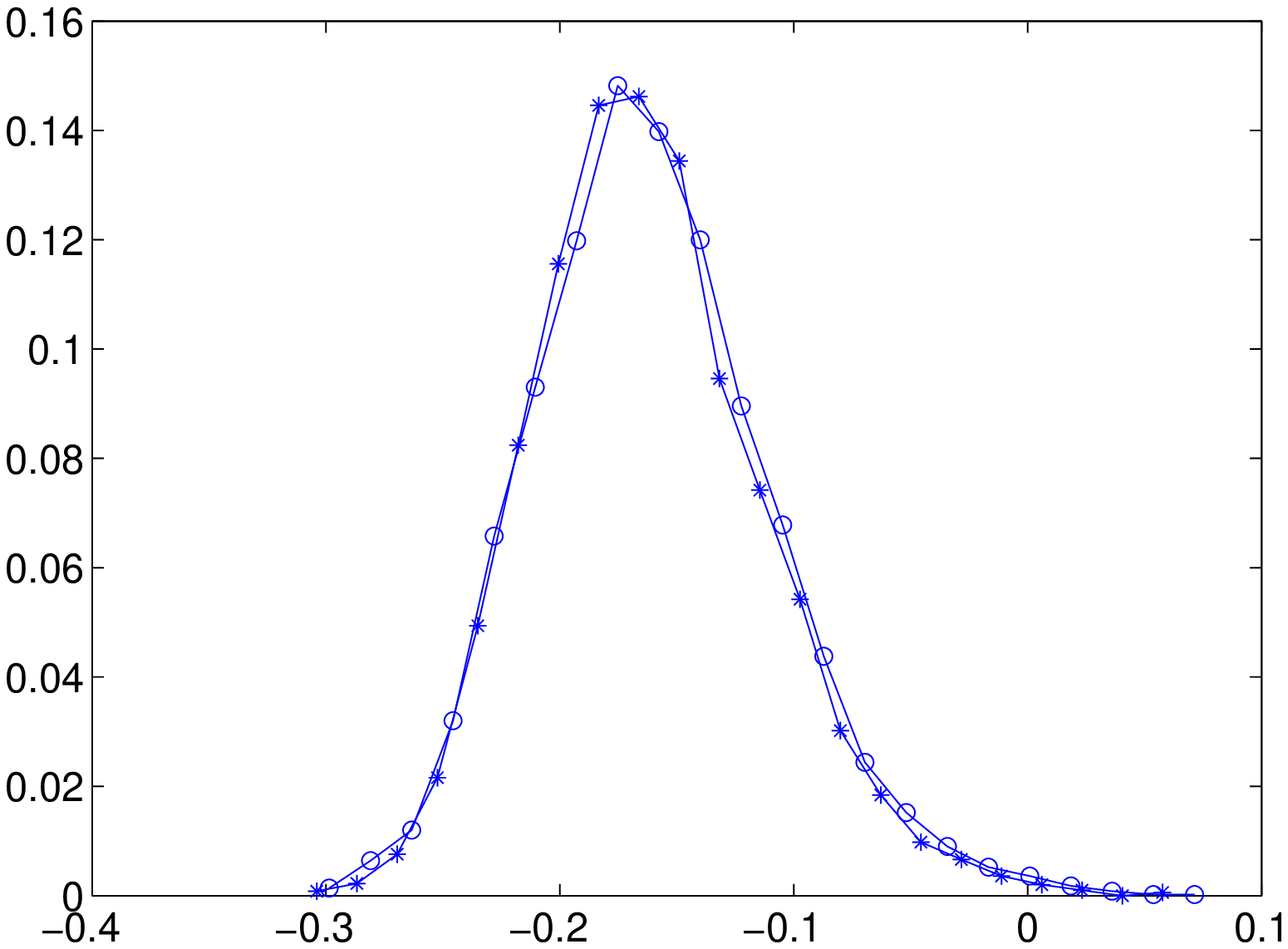}
&\hspace{-8mm}\includegraphics[width=0.52\textwidth]{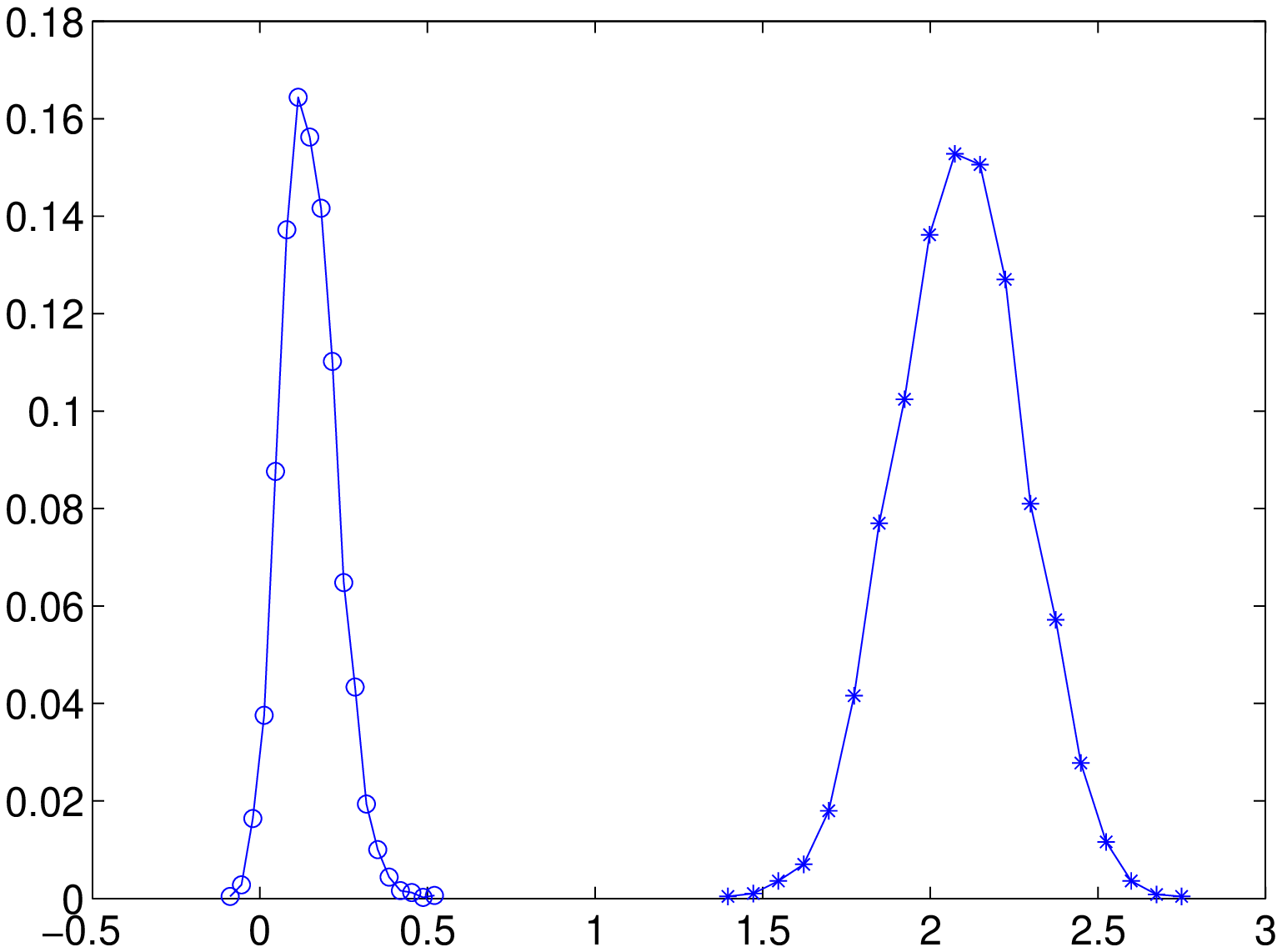}
\end{tabular}
\vspace{-7mm}
\caption{\footnotesize{Empirical densities of the
   quantities $(\hat{\sigma}_{SB}-\sigma)$
   ('*') and
   $(\hat{\sigma}_{n,X}-\sigma)$ ('o')  for the
   ARFIMA$(1,d,0)$ model with $d=0.35$, $n=600$ without
   outliers (left) and with outliers $p=10\%$ and $\omega=10$
   (right).}}
\label{fig:sham_bick_d045}
\end{center}
\end{figure}

Figure \ref{fig:sham_bick_d045} (left) illustrates the second part of 
Proposition \ref{prop:shamos_scale}. $d=0.35$ corresponds to
$D=0.3<1/2$. The right part of Figure
\ref{fig:sham_bick_d045} shows the robustness of the Shamos
estimator with respect to the classical scale estimator in the
presence of outliers.

\end{document}